\numberwithin{equation}{subsection}
\newtheorem{theorem}[subsection]{Theorem}
\newtheorem{classification-theorem}[subsection]{Classification Theorem}
\newtheorem{decomposition-theorem}[subsection]{Decomposition Theorem}
\newtheorem{definition}[subsection]{Definition}
\newtheorem{periodicity-conjecture}[subsection]{Periodicity Conjecture}
\newtheorem{lemma}[subsection]{Lemma}
\newtheorem{proposition}[subsection]{Proposition}
\newtheorem{corollary}[subsection]{Corollary}
\newtheorem{assumption}[subsection]{Assumption}
\newtheorem{example}[subsection]{Example}
\newtheorem{remark}[subsection]{Remark}
\newcommand{\res}{\operatorname{res}\nolimits}
\newcommand{\Gr}{\operatorname{Gr}\nolimits}
\newcommand{\Hom}{\operatorname{Hom}\nolimits}
\newcommand{\im}{\operatorname{im}\nolimits}
\newcommand{\cok}{\operatorname{cok}\nolimits}
\newcommand{\coker}{\operatorname{Coker}\nolimits}
\newcommand{\Ext}{\operatorname{Ext}\nolimits}
\newcommand{\Mod}{\operatorname{Mod }\nolimits}
\renewcommand{\mod}{\operatorname{mod}\nolimits}
\newcommand{\rep}{\operatorname{rep}\nolimits}
\newcommand{\proj}{\operatorname{proj}\nolimits}
\newcommand{\inj}{\operatorname{inj}\nolimits}
\newcommand{\ind}{\operatorname{ind}\nolimits}
\newcommand{\Z}{\operatorname{\mathbb{Z}}\nolimits}
\newcommand{\C}{\operatorname{\mathbb{C}}\nolimits}
\newcommand{\N}{\operatorname{\mathbb{N}}\nolimits}
\newcommand{\gdim}{\operatorname{\underline{dim}}\nolimits}
\newcommand{\GL}{\operatorname{GL}\nolimits}
\newcommand{\iso}{\stackrel{_\sim}{\rightarrow}}
\newcommand{\id}{\mathbf{1}}
\newcommand{\cc}{{\mathcal C}}
\newcommand{\cd}{{\mathcal D}}
\newcommand{\cl}{{\mathcal L}}
\newcommand{\cm}{{\mathcal M}}
\newcommand{\cp}{{\mathcal P}}
\newcommand{\cR}{{\mathcal R}}
\newcommand{\cS}{{\mathcal S}}
\newcommand{\ct}{{\mathcal T}}
\newcommand{\eps}{\varepsilon}
\renewcommand{\tilde}[1]{\widetilde{#1}}
\begin{document}
\title[Generalized quiver varieties and triangulated categories]{Generalized quiver varieties and triangulated categories}
\author{Sarah Scherotzke}
\address{S.~S.~: University of M\"unster, Mathematisches Institut, 
Einsteinstrasse 62, 48149 M\"unster, Germany}

\email{scherotz@math.uni-muenster.de}

\keywords{Nakajima quiver varieties, cluster algebra, monoidal categorification}
\subjclass[2010]{13F60, 16G70, 18E30}

\begin{abstract}
In this paper, we introduce \emph{generalized quiver varieties} which
include as special cases classical and cyclic quiver varieties. 
The geometry of generalized quiver varieties is governed by a finitely generated algebra $\cp$: the algebra $\cp$ is self-injective if the quiver $Q$ is of Dynkin
type, and coincides with the preprojective algebra in the case of classical
quiver varieties. We show that  in the Dynkin case the strata of generalized quiver varieties
are in bijection with the isomorphism classes of objects in $\proj \cp$, and
that their degeneration order coincides with the
Jensen--Su--Zimmermann's degeneration order on the triangulated cate-
gory $\proj \cp$. Furthermore, we prove that classical quiver varieties of type $A_n$ can be realized as moduli spaces of representations of an algebra $\cS$.

\end{abstract}

\maketitle

\tableofcontents

\section{Introduction}
Quiver varieties associated with
a finite and acyclic quiver $Q$ were first introduced by Nakajima in \cite{Nakajima94}.
Since then they have been of great importance in Nakajima's geometric study
of Kac-Moody algebras and their representations \cite{Nakajima94}, \cite{Nakajima98}. 
Furthermore, Nakajima quiver varieties yield important examples of symplectic hyper-K{\"a}hler 
manifolds appearing in various fields of representation theory. 

Graded quiver varieties, defined as fixed point sets of ordinary quiver varieties in \cite{Nakajima01}, found an interesting 
application in constructing a monoidal categorification of cluster algebras (see \cite{Nakajima11}, \cite{Qin12}, \cite{KimuraQin12}) in the sense of 
Hernandez and Leclerc, \cite{HernandezLeclerc10} and \cite{HernandezLeclerc11}. Cyclic quiver varieties have recently been used by 
Qin to realize a geometric construction of quantum groups \cite{Qin14}. 
In \cite{KellerScherotzke13a}, motivated by the construction of monoidal categorifications of cluster algebras, 
we studied various properties of graded quiver varieties. 

 In this paper we introduce \emph{generalized quiver varieties} and investigate their properties. 
 Classical and cyclic quiver varieties arise as special cases of  generalized quiver varieties, and our theory provides a new construction of these varieties in terms of representations of orbit categories. Many properties of classical and cyclic quiver varieties (such as smoothness) are general features  of  all generalized quiver varieties.  This allows us to reprove some known results in a way that applies simultaneously to classical and cyclic quiver varieties, removing the need to study these two cases  separately.  We also prove results, which are new for classical quiver varieties of Dynkin type, such as the existence of a stratification functor. 
 
%
%

One of the main motivations for this project is that the theory of generalized quiver varieties provides the tools for comparing Bridgeland's and Qin's work on quantum groups and Hall algebras \cite{Bridgeland13}, \cite{Qin14}. This is accomplished in our recent work \cite{ScherotzkeSibilla} that  relies in an essential way on the results of this paper. 
Another source of motivations comes from the problem of desingularizing quiver Grassmannians. We show  in \cite{S} that
generalized quiver varieties give the right framework to construct desingularization
maps for quiver Grassmannians of self-injective algebras of finite type. 
 
 In the rest of this Introduction we give a more detailed account of our main results. 
 
 \subsection{Nakajima quiver varieties}
 
Recall that to the choice of dimension vectors $w$ and $v$ for the set of vertices of $Q$, 
Nakajima associates the smooth quiver variety $\cm(v,w)$ and the affine quiver variety $\cm_0(v,w)$. 
There are two main features of quiver varieties which play an important role in their application to Kac-Moody algebras and cluster algebras:   If $Q$ is of Dynkin type, i.e. a quiver whose underlying graph is an ADE Dynkin diagram,
\begin{enumerate}
\item there is a projective map $\pi: \cm(v,w) \to \cm_0(v, w)$. 
\item  the map $\pi$ induces a stratification $$\cm_0(w)= \bigsqcup_v \cm_0^{reg}(v,w)$$ into non-empty smooth and locally closed strata. 
\end{enumerate} 

In this paper we show that our generalized quiver varieties  of Dynkin type satisfy analogous properties. Furthermore, we show that their geometry is governed by 
an associated self-injective algebra. This greatly generalizes work of \cite{KellerScherotzke13a}. In the case of classical Nakajima quiver varieties, the self-injective algebra describing the geometry of quiver varieties is
the preprojective algebra $\cp_Q$, associated to the quiver $Q$. Preprojective algebras were introduced in \cite{GelfandPonomarev79} and play an important role in Lusztig's study of canonical bases \cite{Lusztig91}, \cite{Lusztig92}.  
 
\subsection{Generalized Nakajima categories and generalized quiver varieties} 

In order to define    generalized  quiver varieties 
we consider configurations of the regular (graded) Nakajima 
categories $\cR^{gr}_C$, which are certain quotient categories of 
the Nakajima category $\cR^{gr}$, and were introduced in \cite{KellerScherotzke13a}.  
Taking the orbit category with respect to an isomorphism $F$ of $\cR^{gr}_C$ yields the \emph{generalized regular Nakajima category} $\cR$.  
In analogy to the classical case, we can define generalized quiver varieties associated to $\cR$ using geometric invariant theory and obtain 
two varieties $ \cm(v,w)$ and $ \cm_0(v,w)$  satisfying the properties $(1)$ and $(2)$ of Section 1.1. The cyclic and classical quiver varieties appear as special case of this construction. 

Associated to the regular Nakajima category $\cR$, we consider a certain full subcategory $\cS$ of $\cR$, the \emph{singular Nakajima category}.
We show that the quotient category $\cp \cong \cR/ \langle \cS \rangle$
 describes the stratification into smooth strata $\cm^{reg}_0(v,w)$, the degeneration order between the strata and the fibres of $\pi$.  
In fact, if we consider the special case of classical quiver varieties, $\cp$ is isomorphic to the preprojective algebra $\cp_Q$. 
If $Q$ is a Dynkin quiver the category $\cp$ has similar properties than the preprojective algebra: 
it is self-injective, finite-dimensional and its category of 
projective modules is triangulated. The triangulated structure of projective $\cp$--modules 
will be crucial to establish an equivalence between the degeneration 
order on strata and a degeneration order on triangulated categories defined by Jensen, Su and Zimmermann  \cite{JensenSuZimmermann05a}.

Finally, note that graded Nakajima categories and graded quiver varieties are not a special case 
of generalized Nakajima category respectively generalized quiver varieties.

\subsection{Affine quiver varieties as moduli spaces} 

It was shown in \cite{KellerScherotzke13a} and \cite{LeclercPlamondon12} that 
graded affine quiver varieties can be realized as spaces of representations
of the (graded) singular Nakajima category $\cS^{gr}$.  Let $Q$ be of Dynkin type. 
As shown in Theorem \ref{thm:An}, an analogous result holds for the classical affine quiver varieties
associated with quivers $A_n$ that is, the affine quiver variety $\cm_0(w)$ is
isomorphic to $\rep(w,\cS)$, the space of representations with dimension vector
$w$ of the singular Nakajima category $\cS$. In general, we only obtain a bijection
of closed points $\res :\cm_0(w) \to \rep(w, \cS)$. We also provide an example
in which the equivalence of schemes fails. Under strong assumptions on $F$,
we do obtain the equivalence of schemes $\rep(w, \cS)$ and $\cm_0(w)$.
We give a more explicit description of $\cS$. We have that  $\cS$
is given as an algebra by $\C Q_\cS/ I$, where $Q_\cS$ is a finite quiver and $I$ is an
admissible ideal of the path algebra of $Q_{\cS}$. We determine the quiver $Q_\cS$
and the number of minimal relations of paths between two vertices of $Q_\cS$
in terms of dimensions of morphism spaces in $\proj \cp$  (see Proposition \ref{proposition: quiver of S}).
Hence the self-injective algebra $\cp$ plays a key role in the description of the affine quiver variety.

\subsection{Stratification of affine quiver varieties} 
One of our main results  consists in the construction of a so-called \emph{stratification map} $\Psi$ from the category
of finite-dimensional $\cS$--modules to the category $\inj^{nil} \cp$ of finitely-cogenerated
injective $\cp$--modules that have nilpotent socle. We call $\Psi$ a stratification map as it satisfies the following properties:
\begin{itemize}
\item  two points $M$ and $M'$ of $\cm_0(v,w)$ belong to the same stratum if and only if their images under $\Psi \circ \res$ are isomorphic (Theorem~\ref{thm: stratification-dynkin} and \ref{thm: stratification}). 
\item If $Q$ is of Dynkin type, $\inj^{nil} \cp$ equals $ \proj \cp$, the category of finitely-generated projective $\cp$--modules which has a triangulated structure.
In this case, we show that $\Psi$ can be promoted to a $\delta$--functor and behaves well with respect to the degeneration order on strata: the degeneration order in $\cm_0(w)$ 
corresponds under the stratification functor to the degeneration order in the sense of \cite{JensenSuZimmermann05a} applied to the triangulated category $\proj \cp$. 

\end{itemize}

{\bf Acknowledgements: } The author would like to thank Bernhard Keller for his many valuable suggestions, 
Nicol{\`o} Sibilla, Bernard Leclerc  and Pierre-Guy Plamondon for comments on the first draft, Peter Tingley,
 Alastair Savage and Alfredo N{\'a}jera Ch{\'a}vez for useful discussions. 
The author is indebted to Hiraku Nakajima for inviting her to the RIMS. Finally, the
author thanks the referee for his useful corrections and remarks.

\section{Generalized Nakajima categories}\label{s:Nakajima-categories}
\subsection{Notations}
For later use, we introduce the following notations. 
Let $k$ be an algebraically closed field of characteristic zero and $\Mod k$ be the category of $k$-vector spaces. 
Recall that a {\em $k$-category} is a category whose
morphism spaces are endowed with a $k$-vector space structure such
that the composition is bilinear. Let $\cc$ be a $k$-category and let
$\Mod(\cc)$ be the category of {\em right $\cc$--modules}, i.e.~$k$-linear functors 
$\cc^{op} \to \Mod(k)$. For each object $x$ of $\cc$, we obtain a {\em free
module}
\[
x^{\wedge}=x^{\wedge}_\cc = \cc(?,x): \cc^{op} \to \Mod k
\]
and a {\em cofree module}
\[
x^{\vee}= x^\vee_\cc = D (\cc(x,?)): \cc^{op} \to \Mod k.
\]
Here, we write $\cc(u,v)$ for the space of morphisms $\Hom_\cc(u,v)$ and
$D$ for the duality over the ground field $k$. Recall that for each object $x$ of $\cc$ and
each $\cc$--module $M$, we have canonical isomorphisms
\begin{equation} \label{eq:Yoneda}
\Hom(x^\wedge,M) = M(x) \quad\mbox{and}\quad
\Hom(M,x^\vee) = D(M(x)).
\end{equation}
In particular, the module $x^\wedge$ is projective and $x^\vee$ is injective. We will denote $\proj \cc$ the full subcategory of  $\cc$--modules with
objects the finite direct sums of objects $x^{\wedge}$ and dually, we denote $\inj \cc$ the full subcategory of  $\cc$--modules with
objects the finite direct sums of objects $x^{\vee}$. 

Furthermore, we denote throughout the paper by $\cc_0$ the set of objects of $\cc$ and mean by a dimension vector of $\cc$ a function $w: \cc_0 \to \N$ with
finite support. We define $\rep(\cc, w)$ to be the space of $\cc$--modules $M$ such that $M(u) = k^{w(u)}$ for each
object $u$ in $\cc_0$. Note that all $k$-linear categories that we consider will be basic. 

Finally, we call a $\cc$--module $M$ {\em pointwise
finite-dimensional} if $M(x)$ is finite-dimensional for each object $x$ of $\cc$. 
For all triangulated categories $\ct$, we shall denote by $\Sigma$ the shift functor. We will denote by $\tau$ the Auslander-Reiten translation and by $S$ the Serre functor of $\ct$, if they exist. 
We will denote by $\cd_Q$ the bounded derived category of finite-dimensional $kQ$--modules. Note also that we  always compose arrows from left to right. 

\subsection{Nakajima Categories and Happel's Theorem}\label{Happel}
In this section, we define the generalized regular (resp. singular) Nakajima categories $\cR$ (resp. $\cS$). The regular Nakajima category is a mesh category which we can associate to any acyclic finite quiver and $\cS$ is a full subcategory of $\cR$. The graded Nakajima categories have also been defined in \cite{KellerScherotzke13a} and the generalized Nakajima categories in \cite{S}, but for the convenience of the reader, we give a short definition here. In \cite{KellerScherotzke13a}, we denoted the graded Nakajima categories by $\cR$ and $\cS$, here we will denote them by $\cR^{gr}$ and $\cS^{gr}$ to distinguish them from the ungraded Nakajima categories.

Let $Q$ be a finite acyclic quiver with set of vertices $Q_0$ and set of arrows $Q_1$. We will assume throughout the article that 
$Q$ is connected. This allows a simplified exposition of results as we will have to distinguish between the case when $Q$ is of Dynkin type,
 that is the unoriented underlying graph of $Q$ is a Dynkin diagram, or not.

The {\em framed quiver $Q^f$} is obtained from $Q$ by adding, for each vertex $i$, a new vertex $i'$ and
a new arrow $i \to i'$. For example, if $Q$ is the quiver $1 \to 2$, the
framed quiver is
\[ 
\xymatrix{ 
2 \ar[r] & 2' \\
1 \ar[r] \ar[u] & 1'.
} 
\] 
Let $\Z Q^f$ be the repetition quiver of $Q^f$ (see  \cite{Riedtmann80a}). We refer to the
vertices $(i', p)$, $i\in Q_0$, $p\in \Z$, as the {\em frozen vertices} of $\Z Q^f$. We define the automorphism $\tau$ of $\Z Q$ to be the shift
by one unit to the left, so that we have in particular $\tau(i,p)=(i, p-1)$ for all vertices $(i, p) \in Q_0 \times \Z$. Similarly, 
we define $\sigma$ by $(i,p) \mapsto (i', p-1)$ and $ (i', p) \mapsto (i, p)$  for all $i \in Q_0$, $p \in \Z$. 

For example, if  $Q$ is the Dynkin diagram $A_2$, the repetition $\Z Q^f$ is the quiver
\[ 
\xymatrix{ \cdots  \ar[r]  & y \ar[r] \ar[rd]^{ \alpha}& \cdot \ar[r]  &  \cdot \ar[r]   \ar[rd] &  \cdot \ar[r] & \cdot \ar[r]   \ar[rd] & \cdots \\
\tau(x)  \ar[r]  \ar[ru]^{\bar \alpha}  & \sigma(x)  \ar[r]  &  x  \ar[r]  \ar[ru]  &  \sigma^{-1} (x) \ar[r]  & \tau^{-1}(x) \ar[r]  \ar[ru]  &  \cdot \ar[r]  &  \cdots\ . }
\] 

Finally we denote by $\bar \alpha$ the arrow associated by construction to $\alpha: y \to x$ that runs from $\tau x \to y$. 
As in  \cite{Gabriel80} and \cite{Riedtmann80a}, we denote by $k(\Z Q)$ the \emph{mesh category} of $\Z Q$,
 that is the objects are given by the vertices of $\Z Q$ and the morphisms are 
$k$-linear combinations of paths modulo the ideal spanned by the mesh relations 

$$R_x: \sum_{\alpha: y \to x}  \overline {\alpha} \alpha,$$ where the sum runs through all arrows of $\Z Q$ ending  in $x$. Note that we always compose arrows from left to right.

By Happel's Proposition~4.6 of \cite{Happel87}
and Theorem~5.6 of \cite{Happel88}, there is a fully faithful embedding 

$$ H : k(\Z Q) \hookrightarrow \ind \cd_Q$$ where $\ind \cd_Q$ denotes the category of indecomposable complexes in 
the bounded derived category of mod $kQ$. The functor $H$ is an equivalence if and only if $Q$ is of Dynkin type. 

The action of $\tau$ on $\Z Q$ corresponds to the action of the Auslander-Reiten translation
on $\cd_Q$, hence justifies our notation. 

\begin{definition}
The {\em regular  graded Nakajima category $\cR^{gr}$} has as objects the vertices of $\Z Q^f$ and the morphism space from
$a$ to $b$ is the space of all $k$-linear combinations of paths from
$a$ to $b$ modulo the subspace spanned by all elements $u R_x v$, where
$u$ and $v$ are paths and $x$ is a non-frozen vertex. 
The {\em singular graded Nakajima category
$\cS^{gr}$} is the full subcategory of $\cR^{gr}$ whose objects are the frozen 
vertices. 
\end{definition}

Note that $\cR^{gr}$ is not equivalent to the mesh category $k (\Z Q^f )$, as we do not impose mesh relations in frozen objects.

\subsection{Configurations and orbit categories of Nakajima categories}\label{ss: configurations}
Let $C$ be a subset of the set 
of vertices of the repetition quiver $\Z Q$. We denote $\cR^{gr}_C$ the
quotient of $\cR^{gr}$ by the ideal generated by the identities of the
frozen vertices not belonging to $\sigma(C)$ and by $\cS^{gr}_C$ the
full subcategory of $\cR^{gr}_C$ formed by the objects corresponding to the 
vertices in $\sigma(C)$.
Furthermore, we denote by $\Z Q_C$ the quiver obtained from $\Z Q$ 
by adding for all $x \in C$ a vertex $\sigma(x)$ and arrows
$\tau (x) \to \sigma(x)$ and $\sigma(x) \to x$. 

Let $F$ be a triangulated isomorphism on $\cd_Q$. As $k(\Z Q)$ is the mesh category
of the preprojective objects in $\cd_Q$, $F$ induces a $k$-linear isomorphism
of $k(\Z Q)$, which we also denote by $F$. We make the following assumption
on $C$ and $F$.

\begin{assumption} \label{main-assumption}
For each vertex $x$ of $\Z Q$, the sequences
\begin{equation} \label{eq:left-exact-sequences}
0 \to \cR^{gr}_C(?,x) \to \bigoplus_{x \to y} \cR^{gr}_C(?,y) \quad \mbox{and}\quad
0 \to \cR^{gr}_C(x,?) \to \bigoplus_{y \to x} \cR^{gr}_C(y,?) \quad
\end{equation}
are exact, where the sums range over all arrows of $\Z Q_C$ whose
source (respectively, target) is $x$.

Furthermore,  we have that $ F(C) \subset C$ and $F^n $ is not the identity on objects for all $n \in \Z$. 
\end{assumption}
We call all $C$ satisfying the above assumption \emph{admissible configuration} and $(C, F)$ an \emph{admissible pair.} 
For example if $C$ is the set of all vertices of $\Z Q$, then $C$ is  an admissible. Another class of examples is given in \cite{LeclercPlamondon12}.

Note that $F$ commutes with $\tau$ and extends uniquely to $\cR^{gr}_C$ by setting $F\sigma(c):= \sigma(F(c))$ for all $c \in C$. 
We will denote by $\cR$ the orbit category of $\cR^{gr}_C/ F$ and by $\cS$ its full subcategory 
with objects $\sigma(C)$. 
Note that the quotient $ \cR^{gr}_C / \langle  \cS^{gr}_C \rangle \cong k(\Z Q)$ does not depend on 
$C$ and is equivalent to the image of the Happel functor $H$. 
It is easy to see that the orbit category of $ \cR^{gr}_C / \langle  \cS^{gr}_C \rangle$ by $F$ is naturally equivalent to the quotient 
$  \cp:= \cR / \langle  \cS \rangle $.

Finally, we denote by $\tilde Q$ the quiver $\Z Q_C/F$ with vertices the $F$--orbits on the set of vertices of $\Z Q_C$ 
and the number of arrows $x \to y$ between two fixed representatives $x$ and $y$ of $F$--orbits is
given by the number of arrows from $x \to F^i y$ for all $i \in \Z$ in the quiver $\Z Q_C$. By our condition on $F$, 
it is easy to see that $\tilde Q$ is a finite quiver and the canonical map $ \Z Q_C \to \tilde Q$ is a Galois covering. 

In the sequel, we will identify the orbit categories $\cR$, $\cS$ and $\cp$ with their equivalent skeletal categories, in which we identify 
all objects lying in the same $F$--orbit. 

\subsection{The Dynkin case} 

In this section, we assume that $Q$ is of Dynkin type. We will see that choosing an automorphism $F$ as above is equivalent to the choice
of a triangulated automorphism of $\cd_Q$ such that the canonical map $ \cd_Q \to \proj \cp$ is a triangulated functor. 

\begin{example}\label{exa: A2}
In the case that $Q=A_2$, $F=\tau$  and that $C$ is the set of all vertices of $\Z Q$, our assumptions are satisfied and  $ \cR$ is equivalent 
to the path category of $\tilde Q$ 
\[ 
\xymatrix{ 
2 \ar[r]^{\alpha} \ar@/^/[d]^{\overline \gamma}& 2' \ar@/^/[l]^{\overline \alpha}\\
1 \ar[r] ^{\beta}\ar[u] ^{ \gamma}& 1'  \ar@/^/[l] ^{\overline \beta}
} 
\]
modulo the mesh relations, which are given by 
\[
\alpha \overline \alpha+ \overline \gamma  \gamma=0 \text{ and } \beta \overline \beta+  \gamma \overline \gamma=0.
\]
We see that $ \cp$ is equivalent to the projective modules over the preprojective algebra $\cp_{A_2}$.
\end{example}
We have just seen an example in which $\cp$ is equivalent to the projective modules over the preprojective algebra, which we will recall in \ref{ss: preprojective}. Note that $\proj \cp_Q$ is triangulated. 
This holds in greater generality as shown in \cite{S}. 

\begin{lemma}\label{lemma: tilde P}
Suppose that $Q$ is of Dynkin type. 
Under the assumption  \ref{main-assumption}, $F$ lifts to a triangulated functor of $ \cd_Q$ such that the canonical morphism $\cd_Q \to \proj \cp $ is triangulated. Furthermore
 $ \cp$ is $\Hom$-finite and has up to isomorphism only finitely many objects and $\proj  \cp= \inj  \cp$.
\end{lemma}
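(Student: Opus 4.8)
The plan is to produce the triangulated lift of $F$ first, and then deduce finiteness and the self-injectivity statement $\proj\cp = \inj\cp$ from it together with the structure of the orbit category. Let me think through each piece.

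**Lifting $F$ to a triangulated functor.** Since $Q$ is Dynkin, Happel's functor $H$ is an equivalence $k(\Z Q) \iso \ind\cd_Q$, so $F$, being a $k$-linear isomorphism of $k(\Z Q)$, transports to a $k$-linear isomorphism of $\ind\cd_Q$. I would first recall that $\cd_Q$ is a standard triangulated category (its morphisms are governed by the mesh category), so that an automorphism of the indecomposables that commutes with $\tau$ and respects the mesh relations extends to an autoequivalence of $\cd_Q$. The hypothesis that $F$ commutes with $\tau$ (stated right after Assumption~\ref{main-assumption}) is precisely what forces compatibility with the Auslander–Reiten translate, hence with the Serre functor $S = \Sigma\tau$. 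The point to verify is that this extension is actually \emph{triangulated}, i.e.\ commutes with $\Sigma$ and sends triangles to triangles. Here I would invoke the framework of \cite{Keller05}: the assumptions in that paper (referenced in the subsection on Frobenius models) guarantee that an orbit category $\cd_Q/F$ is again triangulated precisely when $F$ is a triangulated autoequivalence, and the left-exactness conditions \eqref{eq:left-exact-sequences} are the incarnation of those assumptions at the level of $\cR^{gr}_C$. So the real content is checking that the mesh-preserving, $\tau$-commuting isomorphism $F$ is forced to commute with $\Sigma$ as well; since $\Sigma$ itself is expressible through $\tau$ and the combinatorics of $\Z Q$ for Dynkin $Q$, this follows.

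**Identifying $\proj\cp$ as the orbit category and getting finiteness.** The excerpt already records that $\cp \cong \cR/\langle\cS\rangle$ is the orbit category of $\cR^{gr}_C/\langle\cS^{gr}_C\rangle$ by $F$, and that $\cR^{gr}_C/\langle\cS^{gr}_C\rangle \cong k(\Z Q) \simeq \ind\cd_Q$. Passing to projectives, $\proj\cp$ is therefore the orbit category $\cd_Q / F$ of the indecomposables under $F$, which by the previous paragraph is triangulated with the canonical functor $\cd_Q \to \proj\cp$ triangulated. For $\Hom$-finiteness and finiteness of the number of objects: $\cd_Q$ has only finitely many indecomposables up to $\Sigma$-shift (Dynkin type, so $\mod kQ$ is representation-finite and $\cd_Q$ has indecomposables indexed by $\ind(kQ) \times \Z$), and morphism spaces in $\cd_Q$ between any two indecomposables are finite-dimensional. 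The orbit category collapses $\Z$-worth of objects onto finitely many $F$-orbits: because $F^n \neq \id$ for all $n$ but $F$ acts with finitely many orbits on the indecomposables (this is where I must use that $F$ lifts to an autoequivalence of the Dynkin derived category, whose Auslander–Reiten quiver $\Z Q$ has finitely many $\tau$-orbits, and $F$ commutes with $\tau$), there are finitely many objects up to isomorphism in $\proj\cp$. $\Hom$-finiteness follows since $\Hom_{\cp}(x,y) = \bigoplus_{i\in\Z}\Hom_{\cd_Q}(x, F^i y)$ is a finite sum of finite-dimensional spaces — finite because only finitely many shifts $F^i y$ can have nonzero morphisms from the bounded-below object $x$ in the Dynkin case.

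**The self-injective identity $\proj\cp = \inj\cp$.** This is the step I expect to be the crux, and it is where the triangulated structure pays off. The key is that a $\Hom$-finite triangulated category with finitely many indecomposables and a Serre functor is an instance where projective and injective objects of the module category coincide. Concretely, I would argue as follows: the Serre functor $S$ of $\cd_Q$ descends to $\proj\cp$ (since $F$ is triangulated and commutes with $S = \Sigma\tau$), giving $\proj\cp$ a Serre functor $\bar S$. Serre duality $\cp(x,y) \cong D\,\cp(y, \bar S x)$ then identifies the free module $x^\wedge = \cp(?,x)$ with a cofree module: namely $x^\wedge \cong (\bar S x)^\vee$, because $\cp(?,x) \cong D\,\cp(x, \bar S\,?) = D\,\cp(\bar S^{-1}(\bar S x), \bar S\,?)$, which upon using the Serre functor is the cofree module at $\bar S x$. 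Thus every finitely-generated projective is cofree, so $\proj\cp \subseteq \inj\cp$; applying the same argument to $\bar S^{-1}$ gives the reverse inclusion, yielding $\proj\cp = \inj\cp$. The main obstacle is making the descent of the Serre functor to the orbit category precise and checking that $\bar S$ really is a Serre functor for the \emph{module category over} $\cp$ in the sense needed to equate free and cofree modules; this requires that $\cp$ be $\Hom$-finite (established above) and that the Serre duality be functorial through the orbit projection, which is exactly guaranteed by $F$ being triangulated and compatible with $S$.
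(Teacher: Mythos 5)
Your skeleton (lift $F$ to a triangle functor, deduce finiteness, then use Serre duality for $\proj\cp=\inj\cp$) is the paper's skeleton, and your final step is essentially the paper's own argument: Serre duality applied orbit-wise identifies each free $\cp$--module with a cofree one twisted by the Serre functor (your formula $x^\wedge\cong(\bar S x)^\vee$ versus $x^\wedge\cong(\bar S^{-1}x)^\vee$ is an immaterial bookkeeping discrepancy, since $\bar S$ is an autoequivalence). The genuine gap is in the first and central step, the triangulated lift of $F$. You reduce it to the claim that $F$ must ``commute with $\Sigma$'' because $\Sigma$ is combinatorially expressible through $\tau$ on $\Z Q$; but even granting that combinatorial fact (which itself needs a case check, since $\Sigma$ involves a diagram automorphism for $A_n$, $D_{odd}$, $E_6$), an additive automorphism of $\ind \cd_Q$ that agrees with $\Sigma$-commutation on objects is not thereby a triangle functor: one must produce a natural isomorphism $F\Sigma\cong\Sigma F$ and show that $F$ sends distinguished triangles to distinguished triangles, and nothing in your argument does this. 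Your appeal to \cite{Keller05} is circular at exactly this point: Keller's theorem takes as \emph{input} a suitably standard triangulated autoequivalence and outputs a triangulated orbit category; it cannot certify that $F$ is triangulated. The paper closes this gap with a tilting argument: from Serre duality and $F\tau\cong\tau F$ one gets $\cd_Q(x,\Sigma y)\cong\cd_Q(Fx,\Sigma Fy)$ for all $x,y$, hence $F(kQ)$ has no self-extensions and is a tilting object, and $F(kQ)\otimes^{\mathbf{L}}-$ is then a triangle autoequivalence which agrees with $F$ on the indecomposable projective $kQ$--modules, hence is isomorphic to $F$.

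A second, related gap concerns the finiteness claims. That $F$ has finitely many orbits on $\ind\cd_Q$, and that $\cd_Q(x,F^iy)$ vanishes for all but finitely many $i$, do not follow from ``$F$ commutes with $\tau$ and $\Z Q$ has finitely many $\tau$-orbits'' (any power of $\tau$, or the identity, satisfies these hypotheses). The mechanism the paper uses is the torsion-freeness hypothesis $F^n\neq\id$ of Assumption~\ref{main-assumption}: since every automorphism of $k(\Z Q)$ has a power lying in $\langle\tau\rangle$, torsion-freeness yields $F^n=\tau^k$ with $k\neq 0$, and since $\tau^s\cong\Sigma^2$ in Dynkin type, one gets $F^m=\Sigma^h$ with $m,h\neq 0$. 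This single relation is what makes the number of $F$-orbits finite, forces $\cd_Q(x,F^iy)=0$ for almost all $i$, and verifies the hypotheses of \cite{Keller05} so that $\proj\cp$ is triangulated and $\cd_Q\to\proj\cp$ is a triangle functor. Your remark that the exact sequences \eqref{eq:left-exact-sequences} ``are the incarnation'' of Keller's assumptions does not substitute for this: those sequences express admissibility of the configuration $C$, not the conditions on the autoequivalence $F$.
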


Note that the category $\proj \cp$ also admits Auslander-Reiten triangles. The existence of Auslander-Reiten triangles is equivalently to the existence of a Serre functor by Proposition I.2.3 of  \cite{BerghReiten}. The Serre functor of $\cd_Q$ induces a Serre functor of the orbit category as it is $\Hom$-finite. 
Its Auslander-Reiten quiver is given by $\Z Q/F$ and $\proj \cp$ is standard, that is 
the mesh category $k(\Z Q)/F\cong k (\Z Q/F) $ is equivalent to $\cp$.

\begin{example}\label{cluster}
We consider the example, where $Q= A_2$ and $C = (\Z Q)_0$. Let us consider the triangulated functor $F= \Sigma \tau$ acting on 
$\cd_Q$ and by Happel's Theorem on the mesh category $k(\Z Q)$. Clearly $F$ satisfies all conditions in \ref{main-assumption} and $ \cR:=\cR^{gr}/ F$ is given by the path category 
to the quiver $\tilde Q$ obtained by identifying the two vertices labelled by $S_1$ and $P_2$ respectively
\[ \xymatrix
{ & P_2 \ar[r] \ar[dr]_{\alpha} &  \sigma(\Sigma S_2) \ar[r] & \Sigma S_1  \ar[r] \ar[dr] & \sigma( S_1) \ar[r] & S_1 \ar[rd] \\
S_1 \ar[ur]_{\bar \alpha} \ar[r]_{\bar f} & \sigma(S_2) \ar[r]_f  &S_2 \ar[ur] \ar[r] & \sigma(\Sigma P_2) \ar[r] & \Sigma P_2  \ar[r] \ar[ur] & \sigma (P_2) \ar[r] & P_2 
}
\]  The relations in $ \cR$ are given by the mesh relations in the non-frozen objects corresponding to the vertices $S_1, P_2, S_2, \Sigma S_1, \Sigma P_2$. 
For example the mesh relation in $S_1$ yields $$\bar \alpha \alpha+\bar f f=0.$$
Furthermore $\cd_Q/ F $ is the cluster category associated to $Q$ introduced in \cite{BuanMarshReitenReineckeTodorov} which is triangulated by \cite{Keller05}. 
\end{example}

\subsection{Kan extensions and Stability} \label{ss:Kan extensions and Stability}
In \cite{KellerScherotzke13b}, we introduced in more generality the notion of intermediate extensions
and stability. We recall them here in a version which is adapted to the setup of this paper. 
We call an $\cR$--module $M$ {\em stable} if $\Hom_{\cR}(S, M)$ vanishes 
for all modules $S$ supported only in non-frozen vertices. Equivalently, $M$ does not contain
any non zero submodule supported only on non-frozen vertices. 
We call $M$ {\em costable} if we
have $\Hom_{ \cR}(M, S)=0$ for each module $S$ supported only on non-frozen vertices.
Equivalently, $M$ does not admit
any non zero quotient supported only on non-frozen vertices. 
A module is {\em bistable}, if it is both stable and costable. 

As the restriction functor $$\res: \Mod \cR \to \Mod \cS$$ is 
a localization functor in the sense of \cite{Gabriel62}, it admits a right and a left adjoint which we denote $K_R$ and $K_L$ respectively: the left and right Kan extension cf.~\cite[Chapter X]{MacLane98}.

We obtain the following recollement of abelian categories: 
\[ 
\xymatrix{
 \Mod \cp \ar[rr]|\;  && \; \Mod \cR \; \ar@<2ex>[ll] \ar@<-2ex>[ll] \ar[rr]|>>>>>>>>>{\res}  && \; 
 \Mod  \cS
\ar@<2ex>[ll]^{K_R } \ar@<-2ex>[ll]_{K_L }.
}
\]

We define the intermediate extension $$K_{LR}: \Mod \cS \to \Mod   \cR$$ as the image of the canonical map 
$K_R \to K_L$ (see \cite{KellerScherotzke13b} for general properties). To distinguish the graded from the non graded case, we will denote 
the Kan extensions by $K_R^{gr} $, $K_L^{gr} $ and $K_{LR}^{gr} $ in the graded setting. 

\begin{remark} 
Let $e$ be the idempotent in $\cR$ associated to the objects of $\cS$. Then $ e \cR e\simeq \cS$ and $K_L = \cR e  \otimes_\cS -$ and $K_R = \Hom_{\cS}(e \cR,-)$. If $Q$ is of Dynkin type, then $\cp$ is finite-dimensional and therefore $\cR e$ and $e \cR$ are finitely generated $\cS$-modules. Hence $K_R$ and $K_L$ map finite-dimensional $\cS$-modules to finite-dimensional $\cR$-modules.
\end{remark}

In the next Lemma, we summarize some basic properties of Kan extensions used in the sequel of this article. They are proven in Lemma 2.2 
of \cite{KellerScherotzke13b}.
\begin{lemma}\label{lemma:stable}
Let $M$ be a $\cR$--module, then the following holds. 
\begin{enumerate}

\item The adjunction morphism $M\to K_R \res M$ is injective if and only if $M$ is stable.

\item Dually, $M$ is costable if  and only if the adjunction morphism $K_L \res M\to  M$ is surjective.

\item $M$ is bistable if and only if $K_{LR} \res M \cong M$.

\item The adjunction morphism $M\to K_R \res M$ is invertible if and only if $M$ is stable and $\Ext^1(N, M)$ vanishes for all $N $ which lie in the kernel of $\res$. 

\item Dually, $K_{L} \res M \to M$ is invertible if and only if $M$ is costable and $\Ext^1(M, N)$ vanishes for all $N$ which lie in the kernel of $\res$. 
\end{enumerate}
\end{lemma}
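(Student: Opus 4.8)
The plan is to extract everything from two structural facts about the localization $\res$. First, the unit $\eta_M\colon M \to K_R\res M$ and the counit $\epsilon_M\colon K_L\res M \to M$ become isomorphisms after applying $\res$ (since $\res K_R \cong \id \cong \res K_L$ for a localization), so that $\Ker\eta_M$, $\coker\eta_M$, $\Ker\epsilon_M$ and $\coker\epsilon_M$ all lie in the kernel of $\res$, which the recollement identifies with $\Mod\cp$, i.e.\ the modules supported on non-frozen vertices. Second, the defining adjunction isomorphisms $\Hom_\cR(N, K_R\res M) \cong \Hom_\cS(\res N, \res M)$ and $\Hom_\cR(K_L\res M, N) \cong \Hom_\cS(\res M, \res N)$. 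For (1) I note that $\Ker\eta_M$ is a submodule of $M$ lying in $\Mod\cp$, so if $M$ is stable it vanishes and $\eta_M$ is injective; conversely, any nonzero submodule $N\subseteq M$ with $\res N = 0$ satisfies $\Hom_\cR(N, K_R\res M) = \Hom_\cS(0, \res M) = 0$, whence the composite $N\hookrightarrow M \to K_R\res M$ is zero, $N\subseteq\Ker\eta_M$, and injectivity of $\eta_M$ forces stability. Statement (2) follows by the dual argument applied to $\epsilon_M$ and quotients of $M$.

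For (3) I first record that $K_R N$ is always stable and $K_L N$ always costable: the unit of $K_R N$ is an isomorphism because $\res K_R N \cong N$, so stability follows from (1), and dually for $K_L N$. Since stability passes to submodules and costability to quotients, the intermediate extension $K_{LR}\res M = \im(K_L\res M \to K_R\res M)$ is a submodule of the stable module $K_R\res M$ and a quotient of the costable module $K_L\res M$, hence bistable. Identifying the canonical map $K_L\res M \to K_R\res M$ with the composite $\eta_M\circ\epsilon_M$ (both restrict to the identity on $\res M$, so they agree by adjunction), I obtain $K_{LR}\res M = \eta_M(\im\epsilon_M)$. If $M$ is bistable, then $\epsilon_M$ is surjective and $\eta_M$ injective by (1) and (2), so $\eta_M(\im\epsilon_M) = \eta_M(M) \cong M$; conversely $M \cong K_{LR}\res M$ is bistable because $K_{LR}\res M$ is.

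Statements (4) and (5) are dual, so I concentrate on (4). If $\eta_M$ is invertible, then $M \cong K_R\res M$ is stable, and for any extension $0 \to M \to E \to N \to 0$ with $\res N = 0$ the exactness of $\res$ gives $\res E \cong \res M$; naturality of the unit applied to the inclusion $f\colon M\hookrightarrow E$ yields $\eta_E f = K_R\res(f)\,\eta_M$, a composite of isomorphisms, so $f$ is a split monomorphism and $\Ext^1(N, M) = 0$. Conversely, if $M$ is stable then (1) provides a short exact sequence $0 \to M \to K_R\res M \to C \to 0$ with $C \in \Mod\cp$; the vanishing of $\Ext^1(C, M)$ splits it, and then $C$ is a submodule of the stable module $K_R\res M$ lying in $\Mod\cp$, forcing $C = 0$ and hence $\eta_M$ invertible.

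The main obstacle is the careful verification of the structural input rather than the formal deductions: one must confirm that $\res$, being a localization in the sense of Gabriel, genuinely yields $\res K_R \cong \id \cong \res K_L$ together with the recollement identifying $\Ker\res$ with $\Mod\cp$, so that all (co)kernels of the adjunction morphisms are guaranteed to be supported on non-frozen vertices. Once this is in place, parts (1)--(3) are purely diagrammatic, and the only genuinely homological point is the splitting argument in (4)/(5), where the vanishing of $\Ext^1$ against objects of $\Mod\cp$ is converted into invertibility of the unit (respectively counit) by invoking the stability of $K_R\res M$ (respectively costability of $K_L\res M$).
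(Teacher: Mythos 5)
Your proof is correct. Note that the paper itself gives no proof of this lemma — it defers entirely to Lemma 2.2 of \cite{KellerScherotzke13b} — and your self-contained argument (exactness of $\res$ together with $\res K_R \cong \id \cong \res K_L$ placing all kernels and cokernels of the adjunction maps in $\Ker\res$, stability of $K_R$-images and costability of $K_L$-images yielding bistability of the intermediate extension, the identification of the canonical map $K_L\res M \to K_R\res M$ with $\eta_M\circ\epsilon_M$, and the splitting argument converting $\Ext^1$-vanishing into invertibility in (4) and (5)) is exactly the standard formal recollement proof that the cited reference carries out, so your proposal matches the intended argument and fills in what the paper leaves to a citation.
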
 
Note that, as a consequence of (3), the intermediate extension $K_{LR} $ establishes an equivalence between 
$ \Mod  \cS$ and the full subcategory of $\Mod \cR$ with objects the bistable modules.

In this article we construct a functor from representations of the
singular Nakajima category $\cS$ to $\cp$, establishing a bijection 
between the strata of the affine quiver variety and the objects of $\cp$ (see Theorem~\ref{thm: stratification-dynkin}). 

To this end, we define for every $\cS$--module $M$ the modules $CK(M)$ and $KK(M)$ given by
\begin{align*}
KK(M) &=\ker(K_L(M) \to K_{LR}(M)) \mbox{ and } \\
CK(M) &=\cok(K_{LR}(M) \to K_R(M)).
\end{align*}
Note that both $CK$ and $KK$ are supported only in $\cR_0 -\cS_0$. 
Now we have an obvious isomorphisms 
$\cR/ \langle \cS \rangle \iso \cp$. Therefore,
we may view $CK(M)$ and $KK(M)$ as $\cp$--modules. If $Q$ is of Dynkin type, we have an isomorphism $\cR^{gr}/ \langle\cS^{gr} \rangle \iso \text{Ind} \cd_Q$, the category of indecomposable objects 
in $\cd_Q$. The respective functors in the graded setting,  which we denote here by $CK^{gr}$ and $ KK^{gr}$, can be seen as functors from $\Mod \cS^{gr}$ to $\Mod \cd_Q$.

\subsection{Preprojective algebras} \label{ss: preprojective}
The {\em double quiver $\overline {Q}$} is obtained from $Q$ by adding, for each arrow $\alpha$, 
a new arrow $\overline \alpha$ with inverted orientation. For example, if $Q$ is the quiver $1 \to 2$, the
double quiver is
\[ 
\xymatrix{ 
1 \ar@/^/[r] &
2 \ar[l] .
} 
\]

The preprojective algebra, denoted by $\cp_Q$, is the path algebra $k \overline{Q}/  I$ where $I$ 
is the ideal generated by the mesh relations $r_x$ for all $x \in Q_0$. Recall that 
\[
r_x = \sum_{\beta: y \to x} \overline{\beta} \beta- \sum_{\alpha: x \to y} \alpha \overline{\alpha} 
\]
is the {\em mesh relation} associated with a vertex $x$ of $Q$, where the
sum runs over all arrows of $Q$.

Note that by \cite{Lusztig91} Section 12.15, the preprojective algebra depends up
to isomorphism only on the underlying graph of the quiver. Hence we can
assume, that Q has a bipartite orientation, which means that the mesh
relations are of the form
$$
r_x =  \sum_{\beta: y \to x} \overline{\beta} \beta \text{ or}  \sum_{\beta: x \to y} \beta \overline{\beta}
$$

These are exactly the relations of $\cR /  \langle \cS \rangle$, where we choose $C$ to be all vertices
of $\Z Q$ and $F=\tau$ as in the example \ref{exa: A2}.

It will also be convenient to view $\cp_Q$ as the mesh category associated to $\overline{Q}$, that is the objects of $\cp_Q$ are 
the vertices of $Q$ and the morphisms are the $k$-linear combinations of paths in $\overline Q$ modulo the ideal generated by
the mesh relations $r_x.$ From this point of view, the categories of indecomposable objects in $ \proj \cp_Q $ and $\cp_Q$ are equivalent categories by the Yoneda embedding. 

%
%

\section{Generalized Nakajima quiver varieties}\label{ss: generalized}
Nakajima introduces in \cite{Nakajima01} two types of quiver varieties, 
the affine quiver variety $\cm_0$ and the smooth quiver variety $\cm$.
Both quiver varieties give rise to a graded version: the graded affine quiver variety and the graded quiver variety,
which we denote in this paper by $\cm^{gr}_0$ and $\cm^{gr}$.

We define generalized quiver varieties as geometric invariant theory quotients of representations of the regular Nakajima category $\cR$. To obtain the definition 
of the graded quiver variety it suffices to replace $\cR$ by $\cR^{gr}$. We refer to \cite{KellerScherotzke13a} for more details.

\begin{remark} 
For $F=\tau$ and $Q$ acyclic, the original Nakajima varieties are obtained as $\cm(v,w)$ and $\cm_0(w)$. For $F=\tau^n$ the $n$-cyclic quiver varieties are obtained as $\cm(v,w)$ and $\cm_0(w)$.
\end{remark}

We fix two dimension vector $v: \cR_0 -\cS_0 \to \N$ and $w:   \cS_0 \to \N$ and set 
${\mathcal St}(v,w)$ to be the subset of $\rep( v,w, \cR)$ consisting of 
all $\cR$--modules with dimension vector $(v, w)$ that are stable in the sense of  \ref{ss:Kan extensions and Stability}.
Let $G_v$ be the product of
the groups $\GL(k^{v(x)})$, where $x$ runs through the
non-frozen vertices. By base change in the spaces
$k^{v(x)}$, the group $G_v$ acts on the
set ${\mathcal St}(v,w)$. The {\em generalized quiver
variety $\cm(v,w)$} is the quotient
${\mathcal St}(v,w)/G_v$. As shown in the next statement, the action of $G_v$ is free,  and therefore
$\cm(v,w)$ is well-defined. Because of the assumption on $F$, graded quiver varieties are not a special case of generalized quiver varieties. 

The next statement has been proven for classical, graded and cyclic quiver by Nakajima \cite{Nakajima01} \cite{Nakajima11} for the
case where $Q$ is Dynkin or bipartite and by Qin \cite{Qin12} \cite{Qin13a} 
and Kimura-Qin \cite{KimuraQin12} for the extension to the case of an 
arbitrary acyclic quiver $Q$. We provide a proof for all generalized quiver varieties.
\begin{theorem}
The $G_v$--action is free on ${\mathcal St}(v,w)$ and 
the quasi-projective variety $\cm(v,w)$ is smooth. 
\end{theorem}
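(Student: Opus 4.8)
The plan is to realise $\cm(v,w)$ as a moment-map reduction: exhibit $\rep(v,w,\cR)$ as the zero fibre of the mesh-relation map inside the smooth representation space of the underlying quiver, prove that $G_v$ acts freely on ${\mathcal St}(v,w)$ using the stability condition, and then deduce smoothness by a regular-value argument. For freeness, fix $M \in {\mathcal St}(v,w)$ and let $g=(g_x)_x \in G_v$ stabilise $M$, where $x$ ranges over the non-frozen objects $\cR_0-\cS_0$. Extending $g$ by the identity on the frozen objects $\cS_0$ gives an $\cR$-module automorphism $\phi$ of $M$ with $\phi_x=\id$ at every frozen $x$, so $\psi:=\phi-\id_M$ is an endomorphism whose image is a submodule of $M$ supported only on non-frozen vertices. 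By stability $M$ has no such nonzero submodule, hence $\psi=0$ and $g$ is trivial. The identical computation shows that the differential of the action $d_0\colon \fg_v\to T_M\rep(v,w,\tilde Q)$, $\xi\mapsto(\xi_{ta}M_a-M_a\xi_{sa})_a$, is injective, since an element of $\ker d_0$ is precisely an endomorphism of $M$ vanishing on the frozen part; this injectivity is what the smoothness argument will consume.

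Next, the category $\cR$ is the path category of the finite quiver $\tilde Q=\Z Q_C/F$ modulo the mesh relations $R_x$ imposed only at the non-frozen vertices, so $\rep(v,w,\cR)$ is the zero fibre $\mu^{-1}(0)$ of the relation map
\[
\mu\colon \rep(v,w,\tilde Q)\longrightarrow \bigoplus_{x\in\cR_0-\cS_0}\Hom(V_{\tau x},V_x),\qquad M\mapsto\Big(\sum_{\alpha\colon y\to x}M_{\bar\alpha}M_\alpha\Big)_x,
\]
sitting inside the affine space $\rep(v,w,\tilde Q)$ of relation-free representations, on which $G_v$ acts. It then suffices to prove that $\mu$ is a submersion at each point of ${\mathcal St}(v,w)$: then $0$ is a regular value along the (open) stable locus, so ${\mathcal St}(v,w)$ is smooth, and a free action of the reductive group $G_v$ on a smooth variety admitting a geometric quotient produces a smooth quotient $\cm(v,w)$.

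The crux is the surjectivity of $d\mu=d_1$ along ${\mathcal St}(v,w)$, which I would derive from the self-duality of the tangent complex $\fg_v\xrightarrow{d_0}\bigoplus_a\Hom(V_{sa},V_{ta})\xrightarrow{d_1}\bigoplus_x\Hom(V_{\tau x},V_x)$. Pairing each arrow $\alpha$ with its mesh partner $\bar\alpha$ and using the trace form makes the middle term self-dual and identifies $\coker d_1$ with $(\ker d_0)^*$; this is the Calabi--Yau symmetry that, for $Q$ Dynkin, underlies the self-injectivity $\proj\cp=\inj\cp$ of Lemma~\ref{lemma: tilde P}, and that specialises to Nakajima's symplectic moment map when $F=\tau$ identifies $\tau x$ with $x$. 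Granting this, $\coker d_1\cong(\ker d_0)^*=0$ by the injectivity established above, so $d_1$ is surjective and the regular-value conclusion applies.

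I expect the main obstacle to be making this self-duality precise in the orbit-category setting, where $\tau$ is a nontrivial finite-order automorphism of the non-frozen objects and the target of $\mu$ is $\bigoplus_x\Hom(V_{\tau x},V_x)$ rather than $\fg_v^*$. One must carry the $\tau$-twist through the pairing coming from $\alpha\leftrightarrow\bar\alpha$ so that it stays perfect and intertwines $d_1$ with the transpose of $d_0$; for $F=\tau$ and $F=\tau^n$ this is exactly the symplectic reduction of Nakajima, Qin and Kimura--Qin cited above, and the remaining task is to verify that their regular-value argument survives the twist once freeness is in hand.
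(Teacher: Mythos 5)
Your freeness argument is correct and is precisely the paper's own (the image of $\id-g$, equivalently of your $\psi=\phi-\id_M$, is a submodule supported off the frozen vertices, which stability forbids), and your overall strategy for smoothness --- realize $\rep(v,w,\cR)$ as the zero fibre $\nu^{-1}(0)$ inside $\rep(v,w,\tilde Q)$ and prove that $d\nu_M$ is surjective at every stable $M$ --- is also the paper's. The gap is exactly at the step you call the crux. The self-duality you propose does not exist in the generalized setting: the two end terms of the complex $\fg_v \to \bigoplus_a \Hom(V_{sa},V_{ta}) \to \bigoplus_{x\in\cR_0-\cS_0}\Hom(\C^{v(x)},\C^{v(\tau x)})$ have dimensions $\sum_x v(x)^2$ and $\sum_x v(x)v(\tau x)$, which differ for suitable $v$ as soon as $\tau$ acts nontrivially on the non-frozen vertices of $\tilde Q$ --- that is, for every admissible $F$ that does not act as $\tau$ on vertices, including the cyclic case $F=\tau^n$ with $n\ge 2$. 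Likewise your mesh-partner pairing matches the $\alpha$-component $\Hom(V_y,V_x)$ with the $\bar\alpha$-component $\Hom(V_{\tau x},V_y)$, and these are not dual spaces unless $v(\tau x)=v(x)$. Since the pairing you invoke is defined independently of the point $M$, if it existed it would identify $\coker d_1$ with $(\ker d_0)^*$ at \emph{every} point of $\rep(v,w,\cR)$; but at $M=0$ both differentials vanish and $\dim\coker d_1=\sum_x v(x)v(\tau x)\ne\sum_x v(x)^2=\dim\ker d_0$. So the Calabi--Yau/moment-map symmetry is irrecoverably absent --- this is precisely why the paper remarks that generalized quiver varieties are in general not symplectic. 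Your fallback to Nakajima, Qin and Kimura--Qin for $F=\tau^n$ does not close the gap either: in those works cyclic and graded quiver varieties inherit smoothness by being fixed-point loci of reductive group actions on smooth classical quiver varieties, not by running a twisted moment-map regular-value argument, and that mechanism is unavailable here because $\cm(v,w)$ is defined intrinsically from $\cR$ rather than as a subvariety of a classical quiver variety.

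What the paper does instead is a direct construction that uses stability in a way strictly stronger than the injectivity of $d_0$. Fix a non-frozen vertex $x$ and a target element $f$ in the mesh component at $x$. If the map $M_x\colon M(x)\to M(\sigma(x))$ into the adjacent frozen vertex is injective, one takes $N$ with single nonzero component $N_{\sigma(x)}$ solving $N_{\sigma(x)}M_x=f$, so that $d\nu_M(N)=f$. If $s\in\ker M_x$ is nonzero, stability supplies a path from $x$ to a vertex $z$ along which the image of $s$ is not killed by $M_z$, and one defines $N$ inductively along that path so that consecutive mesh contributions cancel, leaving exactly a prescribed rank-one piece of $f$ at $x$; linearity then gives surjectivity of $d\nu_M$, hence equidimensionality of the tangent spaces along ${\mathcal St}(v,w)$ and smoothness of the free quotient $\cm(v,w)$. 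To repair your proof, some argument of this pointwise type must replace the self-duality; carrying the $\tau$-twist through the trace pairing cannot work, because a perfect pairing between spaces of different dimensions does not exist.
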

\begin{proof}
Suppose that $g M= M$ for some $g \in G_v$ and $M \in {\mathcal St}(v,w)$. 
Then $\im ( \id-g )M $ is a submodule of $M$ which has support only on
objects of $\cR_0-\cS_0$. This is a contradiction to the stability of $M$. 
As the action of $G_v$ is free, it remains to show that ${\mathcal St}(v,w)$ is smooth.  
We consider the map 
\[
 \nu: \rep(v,w, \tilde Q) \to \bigoplus_{x \in \cR_0} \Hom( \C^{v(x)} , \C^{v(\tau x)}) ,
M \mapsto  \bigoplus_{ x \in   \cR_0-\cS_0} \sum_{\alpha: y \to x} M(\bar  \alpha)M( \alpha) 
\] where the sum runs through all arrows $\alpha:  y \to x$ in $\tilde Q$ and $\bar \alpha$ denotes the unique 
arrow $ \tau x \to y$ in $\tilde Q$ corresponding to $\alpha$. 
The set $\nu^{-1}(0) $ yields the affine variety $ \rep(v,w, \cR)$. Furthermore the tangent space at 
a point $M \in \rep(v,w, \cR)$ corresponds to the space of all $N \in \rep(v,w, \tilde Q)$ satisfying that 
\[
d\nu_M(N) =  \bigoplus_{ x \in   \cR_0} \sum_{\alpha:  y \to x}   N(\bar \alpha) M(\alpha) +  M (\bar \alpha) N(\alpha)
\]
vanishes. 
We show that in every mesh to $ x\in\cR_0- \cS_0$, and every $f \in \Hom(\C^{v(x)}, \C^{v(\tau x)})$ there is a  representation $N \in \rep(v,w, \tilde Q)$ with image $f$. Clearly, if $M_{x}: M(x) \to M(\sigma(x))$ is injective, this is possible by choosing $N_{\sigma(x)} :M(\sigma x)  \to M(\tau x) $ such that $ N_{\sigma(x)}  M_x=f$ and all other linear maps of $N$ to be zero. Then $d\nu_M(N)=f$. Assume now that $\ker M_x \not =0 $ and $s \in \ker M_x$. 

\[ 
\xymatrix{ &  x_{i+1} \ar[rd]^{ \alpha_i}&  &   \\
\tau(x_i)  \ar[r]  \ar[ru]^{\bar \alpha_i}   \ar[rd]_{\tau \alpha_{i-1}} &   \sigma^{-1} (x_i) \ar[r]  & x_i \ar[rd]_{\alpha_{i-1}}  & \\
& \tau(x_{i-1})  \ar[ru]_{\bar \alpha_{i-1}}  &  &x_{i-1} . }
\]

Then, by the stability of $M$, there is a vertex $z \in \cR_0 -\cS_0$ such that the image of $s$ along a path from $z$  to $x$ in $\tilde Q$ does not lie in the kernel of $ M_z: M(z) \to M(\sigma(z))$. Let us assume that $z=x_n, \ldots , x_0=x$ are the vertices along a minimal length path with arrows $\alpha_i: x_{i+1} \to x_{i} $ and that $f$ is a projection of $s$ onto $s'\in M(z)$.  Then, we can choose inductively along the path for every arrow of the path, linear maps $N(\bar  \alpha_i) $ such that we get components 
$N(\bar \alpha_0) M(\alpha_0)=f$ and 
$M(\tau  \alpha_{i-1}) N( \bar \alpha_{i-1}) + N(\bar  \alpha_{i})  M( \alpha_{i})=0$ for all $1\le i \le n-1$. In the last mesh, we choose $N_{\sigma(z)}: M(z) \to M(\sigma(z))$ such that $N_{\sigma(z)}  M_{z}  + M(\tau \alpha_{n-1}) N(\bar \alpha_{n-1}) =0$. All other linear maps defining $N$ are set to zero.  By the definition of $N$, we have $d\nu_M(N)=f$ and $d\nu_M$  is therefore surjective. 
Hence the tangent spaces at stable points are of 
the same dimension. We conclude that ${\mathcal St}(v,w)$ and $\cm(v,w)$ are smooth. 
\end{proof}

\begin{remark}
Note that generalized quiver varieties are in general not symplectic. Furthermore in general they do not satisfy the odd cohomology vanishing 
proved in Theorem 7.3.5 of \cite{Nakajima01} for classical quiver varieties.
\end{remark}

The {\em affine quiver variety} $\cm_0(w)$ is constructed as follows: we consider for a fixed 
$w$ the affine varieties given by the categorical quotients $$\cm_0(v,w):=\rep(v, w, \cR)//G_v.$$ For all $v' \le v$, where the order is component-wise, 
we have an inclusion by extension by zero, that is by adding semi-simple nilpotent representations with dimension vector $v-v'$  
\[
\rep(v', w, \cR)// G_{v'} \hookrightarrow \rep(v, w, \cR)// G_{v}.
\]
The affine quiver variety $\cm_0(w)$ is defined as the colimit of the quotients $\rep(v, w, \cR)// G_v$ over all $v$ along the inclusions.

If $Q$ is a Dynkin quiver,
it follows from Lemma \ref{finite} and Lemma \ref{lemma:closed-orbits}, that for $v,\ v'$ big enough, the
map $\rep(v',w, \cR)//G_{v'} \hookrightarrow \rep(v,w, \cR)//G_{v}$ is indeed an equivalence and
therefore the affine quiver variety is well-defined in the category of schemes.
In the case that $Q$ is not of Dynkin type, we view $\cm_0(w)$ as an Ind-scheme.

\begin{lemma} \label{finite} Let $Q$ be of Dynkin type. Then $St(v,w, \cR)$ vanishes for all
but finitely many dimension vector $v$.
\end{lemma}
\begin{proof} Let $M$ be a stable representation. Then for every $i\in \cR_0-\cS_0$ and $ 0 \not = a \in M(i)$
 there is a frozen vertex $\sigma(j)$ and a path from $\sigma(j)$ to $i$ such that
the induced linear map $M(i) \to M(\sigma(j))$ does not contain $a$ in its kernel.
We can assume without loss of generality that there is a minimal length path $p_a$ from $j$ to $i$ which does not vanish in $\cp$ such that the composition with the arrow $\alpha_j: \sigma(j) \to j$ satisfies the condition. 
This can be seen as follows: if $p_a$ vanishes in $\cp$ then it is equivalent in $\cR$ to  
a sum of paths which all contain a frozen vertex and are of the same length than $p_a$. Hence
 we can find a shorter path which satisfies the condition. 
By Lemma 2.8 there are only finitely many non-zero paths in $\cp$. It follows that
there is an injective map
$$
 M(i) \to  \bigoplus_{p \in e_i \cp} M(\sigma(j))
$$
and hence for  fixed $w$, there are only finitely many dimension vectors $v$ such
that $St(v,w, \cR)$ does not vanishes.
\end{proof}

\begin{theorem} The set $\cm(v,w)$ canonically becomes a smooth quasi-projective variety and the projection map
\[
\pi_v: \cm(v,w) \to \cm_0(v,w)
\]
taking the $G_v$--orbit of a stable $ \cR$--module $M$ to the unique closed $G_v$--orbit in the closure of $G_v M$, is proper. 
If $Q$ is of Dynkin type, then the induced map $\pi: \sqcup_{v} \cm(v,w) \to \cm_0(w)$ is also proper. 
\end{theorem}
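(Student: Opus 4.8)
The plan is to realise the generalized quiver varieties as quotients in geometric invariant theory, exactly as in the graded case treated in \cite{KellerScherotzke13a} and in Nakajima's and Qin's work \cite{Nakajima01,Qin12}, and to read off properness of $\pi$ from the projectivity of the GIT morphism from a semistable quotient to the affine quotient. Since $\tilde Q$ is a finite quiver, $\rep(v,w,\cR)=\nu^{-1}(0)$ is a genuine finite-dimensional affine variety on which the reductive group $G_v$ acts, so classical GIT applies verbatim. By definition the categorical quotient $\cm_0(v,w)=\rep(v,w,\cR)//G_v=\spec k[\rep(v,w,\cR)]^{G_v}$ is affine; its closed points correspond to the closed $G_v$--orbits, and the quotient map sends an orbit to the unique closed orbit in its closure. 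This affine quotient serves as the local model for the target of $\pi$.

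First I would fix a character $\chi\colon G_v\to\C^{*}$ given by a suitable product of determinants over the non-frozen vertices, and form the projective-over-affine quotient $\Proj\bigoplus_{n\ge 0}k[\rep(v,w,\cR)]^{G_v,\chi^n}$, which carries a projective morphism $\pi_v$ to $\cm_0(v,w)$. The essential step is to identify its semistable locus with ${\mathcal St}(v,w)$. Using the numerical criterion of King together with the Hilbert--Mumford criterion, a destabilizing one-parameter subgroup $\lambda$ of $G_v$ corresponds to a filtration of $M$ by submodules; because $G_v$ acts trivially on the frozen components (which carry the fixed dimensions $w$), the pairing $\langle\chi,\lambda\rangle$ is, up to sign, the total dimension of the associated submodule over the non-frozen vertices. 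With the correct choice of $\chi$ this shows that $M$ is $\chi$--semistable if and only if $M$ has no nonzero submodule supported only on the non-frozen vertices, that is $M\in{\mathcal St}(v,w)$ in the sense of Section~\ref{ss:Kan extensions and Stability}. Moreover the pairing is nonzero for every such submodule, so there are no strictly semistable points and, combined with the freeness of the $G_v$--action established in the previous theorem, every semistable point is properly stable. Hence $\cm(v,w)={\mathcal St}(v,w)/G_v$ coincides with the full GIT quotient, is canonically a geometric and quasi-projective quotient, and is smooth by the previous theorem.

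Properness of $\pi$ then follows formally. The morphism $\pi_v\colon\cm(v,w)\to\cm_0(v,w)$ is projective, hence proper, and on closed points it takes $G_vM$ to the unique closed orbit in $\overline{G_vM}$, which is precisely the map described in the statement. To pass to the global map, recall that $\cm_0(w)=\colim_v\cm_0(v,w)$, where the transition maps $\cm_0(v',w)\hookrightarrow\cm_0(v,w)$ (extension by zero of semisimple nilpotent summands) are closed immersions; composing $\pi_v$ with the closed immersion $\cm_0(v,w)\hookrightarrow\cm_0(w)$ shows that each stratum maps properly. If $Q$ is of Dynkin type, stability bounds $v$ in terms of $w$, so only finitely many strata are nonempty and $\cm_0(w)$ is a genuine affine variety; a finite disjoint union of proper maps into a common target is proper, giving the claim. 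If $Q$ is not of Dynkin type, the statement is read stratumwise over the ind-variety $\cm_0(w)$.

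The step I expect to be the main obstacle is the stability translation in the middle paragraph: pinning down the exact character $\chi$ and proving, via Hilbert--Mumford, that its destabilizing one-parameter subgroups are in bijection with the nonzero submodules supported on the non-frozen vertices, so that GIT $\chi$--stability coincides with the categorical condition ${\mathcal St}(v,w)$ \emph{and} that there are no strictly semistable points. This last point is what guarantees that $\cm(v,w)$ is the entire GIT quotient, and hence that $\pi_v$ is genuinely proper rather than merely a morphism of quasi-projective varieties. A secondary technical issue is the interpretation of properness at the colimit $\cm_0(w)$ when $Q$ is not Dynkin.
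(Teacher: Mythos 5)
Your proposal is correct and takes essentially the same route as the paper: the paper states this theorem without giving its own proof, deferring to the GIT constructions of Nakajima \cite{Nakajima01, Nakajima11} and Qin/Kimura--Qin \cite{Qin12, Qin13a, KimuraQin12}, and your argument --- realizing $\cm(v,w)$ as the $\chi$-semistable $\Proj$ quotient, matching $\chi$-semistability with the categorical stability of Section~\ref{ss:Kan extensions and Stability} via King's criterion, and deducing properness of $\pi$ from projectivity of $\Proj$ over the affine quotient $\cm_0(v,w)$ --- is exactly the construction those references carry out. The two issues you flag (absence of strictly semistable points for the chosen character, and the interpretation of properness over the colimit $\cm_0(w)$, which for Dynkin $Q$ reduces to finitely many nonempty strata) are precisely the points settled in the cited works, so your outline fills in what the paper leaves implicit rather than diverging from it.
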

\begin{proof} The fact that $\cm(v,w)$ is smooth is proven in Theorem 3.2. Let $\chi : G_v \to  C$ be the determinant character. By GIT theory, see Theorem
2.2.4 and Section 2.2 in \cite{Ginzburg}, we have that
$$\rep(v, w, \cR) //_{\chi}G_{v } \to \rep(v,w, \cR)//G_v$$
is a projective morphism and the set of stable points in $\rep(v, w, \cR)//_{\chi} G_v$ 
given by $\cm(v,w)$ form a Zariski open subset. Hence the map  $$\cm(v,w) \to \cm_0(v,w)$$  is proper. 
If $Q$ is of Dynkin type,  $\cm(v,w)$ vanishes on all but finitely many
dimension vectors $v$ and  $ \cm_0(v,w)$ embeds into $\cm_0(w)$. Hence the result
follows.
\end{proof}

\begin{definition}\label{strata}
We denote by $\cm^{reg}(v,w) \subset \cm(v,w) $ the open set consisting of the union of closed $G_v$--orbits of stable representations. 
Furthermore, we denote $\cm_0^{reg}(v,w) := \pi(\cm^{reg}(v,w))$ the open possibly empty subset of $\cm_0(v,w)$. We refer to $\cm_0^{reg}(v,w) \subset \cm_0(v,w)$ as a stratum of $ \cm_0(v',w) $ via the embedding $\cm_0(v,w) \hookrightarrow \cm_0(v',w) $ for $v' \ge v$. 
Note that it follows from the definition that $\pi$ is one-to-one when restricted to $\cm^{reg}(v,w)$. 
\end{definition}

\subsection{Affine quiver varieties and $\cS$--modules}
Based on \cite{LeclercPlamondon12}, we have proven in \cite{KellerScherotzke13a} 
that the graded affine quiver variety $\cm^{gr}_0(w)$ is equivalent to $\rep(\cS^{gr},w)$ as schemes. In this section, we show that this result remains
true for classical quiver varieties with $Q=A_n$. We give a sufficient condition
for this equivalence to be true in the generalized setting. In general the
equivalence fails and we only have a natural bijection between the sets of 
closed points of $\cm_0(w)$  and $\rep(\cS^{gr},w)$.

First, we proceed by describing the closed points of $\cm_0(w)$ more concretely. By abuse of language, we say that 
a $G_v$-stable subset of $\rep(v, w, \cR)$ {\em contains} a module,
if it contains the orbit corresponding to the module.

\begin{lemma}\label{lemma:closed-orbits}
\begin{itemize}
\item The closed $G_v$--orbits in $\rep(v, w,  \cR)$ are represented by 
$ L \oplus N \in \rep( v,w, \cR)$, where $L$ is a bistable module and $N$ is a semi-simple module such that $\res N$ vanishes. 

\item A stable $\cR$--module $M$ belongs to $\cm^{reg}(v,w)$ if and only if it is bistable.
\end{itemize}
\end{lemma}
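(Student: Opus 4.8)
The plan is to establish the first bullet as a characterization (``$G_v\cdot M$ is closed $\iff M\cong L\oplus N$ of the stated form'') via the Hilbert--Mumford criterion and Krull--Schmidt, and then to read off the second bullet. I work inside the affine space $\rep(v,w,\cR)$, on which $G_v$ is reductive and $\res$ is $G_v$-invariant, hence constant on each orbit closure. The basic geometric input I would isolate first is a degeneration lemma: if $0\to A\to P\to B\to 0$ is a short exact sequence of $\cR$-modules in which $B$ is supported only on non-frozen vertices, then choosing a vector-space splitting $P(z)=A(z)\oplus B(z)$ and the cocharacter $\lambda$ of $G_v$ of weight $-1$ on $B$ and $0$ on $A$ gives $\lim_{t\to 0}\lambda(t)\cdot P\cong A\oplus B$. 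The crucial point is that $\lambda$ is trivial at every frozen vertex, since $B$ and its chosen complement vanish there, so $\lambda$ genuinely lies in $G_v$; the limit exists because $A$ is a submodule (forcing the $A\to B$ block of each structure map to vanish), and equals $A\oplus B$ since only the diagonal blocks survive. The dual statement, grading the submodule when the small non-frozen piece is a sub rather than a quotient, is identical.

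For the implication ``closed $\Rightarrow$ decomposition'', suppose $G_v\cdot M$ is closed. By Krull--Schmidt write $M\cong L\oplus N$, where $N$ collects all indecomposable summands isomorphic to some $S_x$ with $x$ non-frozen and $L$ has no such summand; then $N$ is semisimple with $\res N=0$, and it remains to see that $L$ is bistable. If $L$ were not stable it would have a simple submodule $S_x$ ($x$ non-frozen) in its socle, and by the choice of $N$ this $S_x$ is not a direct summand of $L$, so $0\to S_x\to L\to L/S_x\to 0$ is non-split; the degeneration lemma then places $S_x\oplus(L/S_x)\oplus N$ in $\overline{G_v\cdot M}$, a module not isomorphic to $M$ by Krull--Schmidt, so its orbit is strictly smaller, contradicting closedness. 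Applying the dual construction to a simple non-frozen quotient of $L$ shows $L$ is costable, hence bistable.

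For the converse, let $M\cong L\oplus N$ with $L$ bistable and $N$ semisimple, $\res N=0$. The closure $\overline{G_v\cdot M}$ contains a unique closed orbit, represented by some $M_0$; by the implication just proved $M_0\cong L_0\oplus N_0$ with $L_0$ bistable and $N_0$ semisimple non-frozen. Since $\res$ is constant on the closure, $\res L_0=\res M_0=\res M=\res L$, so by Lemma~\ref{lemma:stable}(3) both $L$ and $L_0$ equal $K_{LR}$ of this common restriction, whence $L\cong L_0$. As $M$ and $M_0$ share a dimension vector and semisimple non-frozen modules are determined by their dimension vector, $N\cong N_0$ and therefore $M_0\cong M$. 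Finally, any abstract isomorphism $\phi\colon M\to M_0$ conjugates $\mathrm{Stab}_{G_v}(M_0)$ isomorphically onto $\mathrm{Stab}_{G_v}(M)$, the condition ``trivial at every frozen vertex'' being preserved because $\phi$ is invertible there; thus the orbits have equal dimension, so $G_v\cdot M_0$ cannot lie in the boundary $\overline{G_v\cdot M}\setminus G_v\cdot M$ (a union of strictly smaller orbits), giving $M_0\in G_v\cdot M$ and closedness of $G_v\cdot M$.

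The second bullet is then immediate: a stable $M$ represents a point of $\cm^{reg}(v,w)$ precisely when its $G_v$-orbit is closed, and by the first bullet a stable module with closed orbit must have $N=0$, since a nonzero $N$ would be a submodule supported on non-frozen vertices, contradicting stability; hence such $M$ is bistable, and conversely a bistable module is stable and, by the converse above, has closed orbit. The step I expect to be most delicate is the degeneration lemma and its use in the direct implication: one must verify that the destabilizing cocharacters can always be chosen inside $G_v$, that is, trivial on the frozen vertices --- which is exactly what being ``supported on non-frozen vertices'' secures --- and that the resulting boundary module is genuinely non-isomorphic to $M$, so that the orbit dimension strictly drops.
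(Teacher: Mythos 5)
Your strategy is right in outline, and two of your steps actually improve on the paper's own argument: the explicit cocharacter computation proves the degeneration statement that the paper merely asserts (with the key observation, which you isolate, that the one-parameter subgroup is trivial at the frozen vertices and hence lies in $G_v$), and your closing stabilizer-conjugation argument settles a point the paper leaves implicit, namely why a module isomorphic to $M$ whose $G_v$--orbit lies in $\overline{G_v\cdot M}$ must already lie in $G_v\cdot M$. The genuine gap is that you assume throughout that every simple $\cR$--module killed by $\res$ is nilpotent, i.e.\ of the form $S_x$ with $x$ non-frozen, and hence, where it really matters, that a semisimple module killed by $\res$ is determined by its dimension vector. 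This holds only for $Q$ of Dynkin type. The lemma is stated, and later used (Theorem~\ref{thm: stratification of S}, Corollary~\ref{corollary: points in affine qv}, Theorem~\ref{thm: Grassmannian}), for an arbitrary acyclic quiver, and the paper recalls, citing Crawley--Boevey, that for non-Dynkin $Q$ the algebra $\cp$ has finite-dimensional non-nilpotent simple modules; already a non-nilpotent simple of dimension vector $d$ and the nilpotent semisimple $\bigoplus_x S_x^{\oplus d(x)}$ are non-isomorphic semisimples killed by $\res$ with the same dimension vector.

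The two places where this enters have very different severity. In the forward implication it is cosmetic: let $N$ collect all simple direct summands of $M$ killed by $\res$ (not only the nilpotent ones), and note that a non-stable $L$ has some simple submodule killed by $\res$, since any nonzero submodule supported on non-frozen vertices contains one; your degeneration lemma needs only support on non-frozen vertices, not nilpotency, and the Krull--Schmidt cancellation is unaffected. In the converse it is fatal as written: from $\gdim N_0=\gdim N$ you cannot conclude $N_0\cong N$, hence you do not get $M_0\cong M$. A repair needs an extra ingredient, for instance semicontinuity: for every simple $S$ with $\res S=0$, stability of $L$ and $L_0$ gives $\Hom(S,M)\cong\Hom(S,N)$ and $\Hom(S,M_0)\cong\Hom(S,N_0)$; since $\dim\Hom(S,-)$ is upper semicontinuous on $\rep(v,w,\cR)$ and constant on $G_v$--orbits, $M_0\in\overline{G_v\cdot M}$ gives $\dim\Hom(S,N_0)\ge\dim\Hom(S,N)$, i.e.\ the multiplicity of each such $S$ in $N_0$ is at least that in $N$ (recall $k$ is algebraically closed, so these dimensions are the multiplicities); equality of dimension vectors then forces all multiplicities to agree, whence $N_0\cong N$. (The paper's own route is instead the assertion that $N$ persists as a direct summand of every point of the orbit closure.) With this repair the rest of your proof, including the deduction of the second bullet, goes through.
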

\begin{proof}
(1) Given an 
exact sequence
\[
\xymatrix{ 
0 \ar[r] & N \ar[r] & M \ar[r] & L \ar[r] & 0
}
\]
of finite-dimensional $\cR$--modules with $\res(N)=0$ ( resp. $\res(L)=0$ ), the closure of the 
$G_v$-orbit of $M$ contains $N_{ss}\oplus L$  ( resp. $L_{ss}\oplus N$ )
where $N_{ss}$ is the semi-simple module with the same composition series than $N$.

The
module $K_{LR}(\res M)$ is contained in $\im(\eps) \subset K_R(\res M)$, where 
$\eps$ denotes the adjunction morphism $M \to K_R(\res M)$.
Let $i$ denote the inclusion $K_{LR}(\res M) \subset \im(\eps)$.
Now by the first step, the closure of the orbit of $M$ contains
$\im(\eps)\oplus (\ker(\eps))_{ss}$ and the closure of the orbit
of $\im(\eps)$ contains $K_{LR}(\res M)\oplus (\cok(i))_{ss}$.
Hence if the $G_v$--orbit of $M$ is closed, it contains the object
\[
K_{LR}(\res M)\oplus (\cok(i))_{ss} \oplus (\ker(\eps))_{ss}.
\]
We conclude that $M \cong K_{LR}(\res M)\oplus (\cok(i))_{ss} \oplus (\ker(\eps))_{ss} $ and 
as $K_{LR} \res M$ is bistable, this proves the first implication. 

Conversely, consider a point $L \oplus N$ as in the statement of the Lemma. As $N$ is semi-simple, every element in the closure 
of the orbit of $L \oplus N$ contains $N$ as direct summand. By GIT theory, we know that 
the closure of the orbit of $L \oplus N$ contains a closed orbit $G_v X$.  By the first part $X \cong K_{LR}( \res X) \oplus Z \oplus N$, where $\res Z$ vanishes. 
As the restriction functor is $G_v$--invariant and algebraic, it takes constant value 
on the orbit closure. Therefore $\res L \cong \res X$ and as $L$ is bistable, we have $L \cong K_{LR} (\res L) \cong K_{LR} (\res X)$. Furthermore,
for dimension reasons, $Z$ vanishes. Hence the orbit of $L \oplus N$ and the orbit of $X$ coincide. This finishes the proof. 

(2) Suppose that $M$ is stable and belongs to $\cm^{reg}(v,w)$, then by the first part $M \cong  K_{LR}(\res M)\oplus N$, where 
$\res N $ vanishes. As $M$ is stable, it forces $N=0$. Hence $M$ is bistable by Lemma~\ref{lemma:stable} and conversely, by the first part, all bistable modules give
rise to closed $G_v$--orbits. 
\end{proof}
If $Q$ is of Dynkin type, then $\cS$ is a finitely-generated algebra and the
restriction induces algebraic maps
$$ \res :\cm_0(w) \to \rep(w, \cS)  \text{ and } \res  \pi:\cm(v,w) \to  \rep(w, \cS). $$

Remarkably, in the graded setting, the varieties $\cm^{gr}_0(w)$ and $ \rep(w, \cS^{gr})$ are always isomorphic for any choice of $Q$. This is  proven in \cite{LeclercPlamondon12} and \cite{KellerScherotzke13a}. 
We can show that, if $Q$ is of Dynkin type $A$ and $F= \tau$, then $\rep(w, \cS) $ and
$\cm_0(w)$ are equivalent affine schemes.

\begin{theorem}\label{thm:bijection}
If Q is of Dynkin type, then the natural map
$$ \cm_0(w) \to  \rep(w,  \cS)$$
induces a bijection on the closed points.

\end{theorem}
\begin{proof} By \cite{LiP} and \cite{Lu}, the ring of invariants is generated by coordinate
maps to paths from a frozen vertex to a frozen vertex and trace maps to
cyclic paths in non-frozen vertices. Let us choose $v$ such that $\cm_0(v,w)\simeq \cm_0(w).$ 
Then there is a map from the coordinate ring of $\rep(w, \cS)$ to
the coordinate ring of $\cm_0(v,w)$, inducing the restriction map $\cm_0(v,w) \to \rep(w, \cS)$.
Applying the colimit over $v$ yields a morphism of $\cm_0(w)$ to $\rep(w,\cS)$. This
map is a bijection on the closed points by Lemma \ref{lemma:closed-orbits} as all semi-simple
$\cp$-modules are nilpotent if $Q $ is of Dynkin type by Lemma \ref{lemma: tilde P}. 
\end{proof}
The next example shows that the map of affine schemes $\cm_0(w) \to  \rep(w, \cS)$
is in general not an equivalence.

\begin{example}
Let us consider the example of $Q = A_2$. Then $\cR^{gr}$ is given by 
\[ \xymatrix
{\ldots \ar[r]  & P_2 \ar[r] \ar[dr] &  \sigma(\Sigma S_2) \ar[r] & \Sigma S_1  \ar[r] \ar[dr] & \ldots  & \\
S_1 \ar[ur] \ar[r] & \sigma(S_2) \ar[r] &S_2 \ar[ur] \ar[r] & \sigma(\Sigma P_2) \ar[r] &  \ldots &
}
\] 
and we choose $F$ to be the automorphism which maps $S_1$ to $P_2$. Then $\cR$ is
given by the path algebra of

\[ \xymatrix{ 1 \ar@/^/[d]|f   \ar@(ul,ur)| d\\
\sigma(1) \ar@/^/[u] |g   }
\] 
with relation $d^2 = gf$. Furthermore $\cS$ is given by the path algebra 
\[ \xymatrix{ \sigma(1)   \ar@(ul,ur)| \alpha  \ar@(dl,dr)| \beta
}
\] with relation $\beta^3=\alpha^2$. We denote by $Tr(c)$ the function, which maps a
representation to the trace of the endomorphism ring associated to any cycle
$c$ in the quiver. The invariant ring of $\C[\rep(v, 1,\cR)]^{G_w}$ has generators $Tr(c)$,
where $c$ is any cycle that passes through $\sigma(1)$ and $Tr(d)$. We have the
relation $Tr(d)^2 = Tr(fg)$. The coordinate ring $\C [\rep(v, 1, \cR)]^{G_w}$ is given by
$$\C[x, y,z]/ (y -zx, z^2 -x) \simeq \C[y, z]/(y-z^3)$$
for $v \not=0$ and the coordinate ring of $\rep(1,\cS)$ is
$$\C[x, y]/ (y^2- x^3) \simeq \C[z^2,y]/(y^2 -z^6).$$
Hence we see that $\rep(1, \cS)$ is a cusp, in particular singular, while $\cm_0(1)$ is
its normalisation.

We consider $St(v,1, \cR)$. As there are exactly two non-zero paths in $\cp$ given by the path of length zero and $d$, we obtain, that 
$ St(v,1, \cR)$ is not empty if and only if $v=1$ or $v=2$. 
\end{example}

For $F=\tau$ and $Q=A_n$, we have an equivalence of schemes though.

\begin{theorem}\label{thm:An}
If $Q = A_n$ and $F=\tau$, then the natural map
$$\cm_0(w) \to \rep(w,  \cS)$$
induces an equivalence of schemes.
\end{theorem}
\begin{proof} We show that for any cycle $c$ with vertices passing only through non-frozen
vertices, we have $Tr(c) \in \C[\rep(w, \cS)]$. We proceed by induction on
the number of different vertices contained in the cycle. We give $A_n$ a linear
orientation and number the vertices by $1, \ldots, n$ linearly. Let us denote by
$c_{i,i+1}$ the 2-cycle with vertices $i,i + 1$ in $\overline{Q}$. 
Suppose, we have a cycle 
$c^l_{i,i+1}$ and $ l \in \N$. 
Then by the relations in $\cR$, we have  $c_{i,i+1} = -c_{i,i-1} -c_{i, \sigma(i)}$.
Hence by recursion, it is enough to retreat to the case, where $i = 1$ and $l = 1$.
But then $Tr(c_{2,1}) = Tr(c_{1,2}) = Tr(c_{1, \sigma(1)})$, which finishes the case. Let us
assume that $c$ contains vertices from $i$ to $i + k$. Then 
$Tr(c) = Tr(c^l_{i+1,i} c')$
where $c'$ has vertices $i + 1, \ldots, i + k$.

 Applying recursion, we can rewrite
 $c_{i+1,i} = -c_{i+1,i+2}- c_{i+1, \sigma{(i+1)}}$. Hence $Tr(c)$ is the sum of $Tr(c^l
_{i+1, i+2}c')$ and trace functions of cycles passing through a frozen vertex. Now by induction,
the first term can be written as the trace of a sum of cycles passing through
at least one frozen vertex.
\end{proof}
The previous Theorem fails in general for Dynkin quivers. A counterexample
is the case $Q = D_4$, $F=\tau$ and dimension vectors with value $2$ in
every vertex. We give a sufficient condition for the equivalence of scheme to
hold.

\begin{theorem} If $Q$ is a Dynkin quiver and $F$ an automorphism such that
$\cd_Q(P_i, F^nP_i) = 0$ for all indecomposable projective $Q$-modules $P_i$ and all
$n \in \Z-\{0\}$, then the natural map
$$\cm_0(w) \to \rep(w, \cS)$$
induces an equivalence of schemes.
\end{theorem}
\begin{proof} The condition
$$\cd_Q(P_i, F^nP_i) = 0$$
is equivalent to the fact that all oriented cycles in $\cp$ vanish. Hence every
non-zero cycle in $\cR$ is equal to a cycle that passes through a frozen vertex.
Then all trace functions are already contained in the coordinate algebra of
$\rep(w,\cR)$, and hence the map
$$\cm_0(w) \to \rep(w, \cS)$$
is an equivalence of schemes.
\end{proof}

For later use we record the next corollary which is an easy consequence of the previous results in this section. 

\begin{corollary} \label{corollary: points in affine qv}
Every closed point $ L \in\cm_0(v,w) $ corresponds uniquely to a pair $(L_1, L_2)$ where $ L_1$ is bistable  and $L_2$ is a representative of the isomorphism class of a semi-simple $ \cp$--module, such that $L_1 \oplus L_2$ has dimensionvector $(v,w)$. With this identification the map $\pi : \cm(v,w) \to \cm_0(v,w)$ is given by $G_v N \mapsto (K_{LR} (\res N), N')$ where $N'$ is the semi-simple module with the same composition series than 
$$ \coker( K_{LR} (\res N) \to N ).$$ 
\end{corollary}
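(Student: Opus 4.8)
The plan is to read $\cm_0(v,w)$ as the set of closed $G_v$--orbits in $\rep(v,w,\cR)$ and to deduce the statement by repackaging the structural description of such orbits from Lemma~\ref{lemma:closed-orbits} together with the characterization of bistable modules via the intermediate extension in Lemma~\ref{lemma:stable}. First I would set up the correspondence $L \mapsto (L_1,L_2)$. A point of $\cm_0(v,w)$ is a closed $G_v$--orbit, so by Lemma~\ref{lemma:closed-orbits}(1) it is represented by a module $L' \oplus N$ with $L'$ bistable and $N$ semi-simple satisfying $\res N = 0$. Putting $L_1 := \res(L' \oplus N) = \res L' \in \rep(w,\cS)$ and using that $\res$ annihilates $N$, Lemma~\ref{lemma:stable}(3) recovers the bistable part as $L' \cong K_{LR}(L_1)$. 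Since $\res N = 0$, the module $N$ is supported on $\cR_0 - \cS_0$ and hence, via the isomorphism $\cR/\langle \cS \rangle \iso \cp$, defines a semi-simple $\cp$--module $L_2 := N$.

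To see that this assignment is a bijection I would argue uniqueness by Krull--Schmidt. If $L' \oplus N \cong L'' \oplus N''$ with $L',L''$ bistable and $N,N''$ semi-simple killed by $\res$, then applying $\res$ gives $\res L' \cong \res L''$, whence $L' \cong K_{LR}(\res L') \cong K_{LR}(\res L'') \cong L''$ by Lemma~\ref{lemma:stable}(3), and cancelling the common summand forces $N \cong N''$. Conversely, a pair $(L_1,L_2)$ returns the orbit of $K_{LR}(L_1) \oplus L_2$, which is closed by Lemma~\ref{lemma:closed-orbits}(1). This gives the claimed identification of closed points with pairs.

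For the formula describing $\pi$, let $N$ be a stable module representing a point of $\cm(v,w)$. By Lemma~\ref{lemma:stable}(1) the adjunction $\eps : N \to K_R(\res N)$ is injective, so I would identify $N$ with $\im(\eps)$. As established in the proof of Lemma~\ref{lemma:closed-orbits}(1), one has $K_{LR}(\res N) \subseteq \im(\eps) \cong N$, the $\ker(\eps)$ contribution vanishes by stability, and the unique closed orbit in the closure of $G_v N$ is represented by $K_{LR}(\res N) \oplus (\cok i)_{ss}$, where $i$ is the inclusion $K_{LR}(\res N) \hookrightarrow N$ and $(\cok i)_{ss}$ denotes the semi-simple module with the same composition series as $\cok(i)$. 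Since $\res K_{LR}(\res N) \cong \res N$ and $\res$ kills the semi-simple summand, the first coordinate under the bijection above is $\res N$, while the second is the semisimplification of $\cok(i) = \coker(K_{LR}(\res N) \to N)$, which is exactly $N'$. Hence $\pi(G_v N) = (\res N, N')$.

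Since this corollary is largely a repackaging of Lemma~\ref{lemma:closed-orbits}, the only genuinely delicate point is the identification of the canonical map $K_{LR}(\res N) \to N$ used in the statement with the inclusion $i$ produced in the lemma. Concretely, I must confirm that the containment $K_{LR}(\res N) \subseteq \im(\eps)$ (already supplied in the lemma's proof) makes $\eps^{-1}$ restrict to the canonical map $K_{LR}(\res N) \to N$, so that its cokernel is genuinely $\cok(i)$ and its semisimplification is $N'$. Once this compatibility is pinned down, the remaining verifications are routine bookkeeping with the recollement and Krull--Schmidt.
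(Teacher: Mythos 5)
Your proposal is correct and follows exactly the route the paper intends: the paper states this corollary without proof as an easy consequence of Lemma~\ref{lemma:closed-orbits} (closed orbits are $K_{LR}(\res M)\oplus N$ with $N$ semi-simple and killed by $\res$) together with Lemma~\ref{lemma:stable}, which is precisely your argument. The one point you flag as delicate — that the inclusion $i$ corresponds under $\eps$ to the canonical map $K_{LR}(\res N)\to N$ — is indeed fine: the counit $K_L(\res N)\to N$ composed with the unit $\eps$ is the canonical map $K_L(\res N)\to K_R(\res N)$ by the triangle identities, so both maps have the same image and the same cokernel.
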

\begin{proof}As the closed points in $\cm_0(v,w)$ are given by  the closed $G_v$-orbits of $\rep(v,w, \cR)$, the first statement follows by  Lemma \ref{lemma:closed-orbits}. 
The map $\pi$ maps the $G_v$-orbit of a stable module $N$ to the unique closed orbit in the closure of $G_vN$. The unique closed orbit in the closure of $G_v N$ is by the proof of part (1) of Lemma \ref{lemma:closed-orbits}  the orbit of  $K_{LR} \res  N \oplus \coker(i)_{ss} \oplus \ker(\epsilon)_{ss}$, where $\epsilon: N \to K_R(\res N)$. As $N$ is stable, $\epsilon$ is injective. Therefore $\ker(\epsilon)$ vanishes, $ \im(\epsilon)\simeq N$ and $i$ is given by $K_{LR} (\res N) \to N$. Furthermore, $K_{LR} \res N$ is bistable by Lemma \ref{lemma:stable}. 
\end{proof}

If $Q$ is of Dynkin type, all semi-simpe $\cp$-modules are nilpotent, and by the definition of $\cm_0(w)$ the closed points $(L_1, L_2)$ and $(L_1, 0) $ get identified. As a consequence, 
we obtain that every closed point of $\cm_0(w)$ lies in a strata $\cm^{reg}_0(v,w)$ as defined in \ref{strata}. 

\subsection{From graded to ungraded Nakajima categories}
In Nakajima's original construction, the graded and cyclic quiver varieties are defined as fixed point sets of the quiver varieties with respect to a $\C^*$-action (see \cite{Nakajima01}). Hence they are naturally closed subvarieties of the original quiver varieties. We used a different approach in this paper: we started by introducing the graded quiver varieties as moduli spaces of graded Nakajima categories and introduced quiver varieties as moduli spaces 
of orbit categories of the graded Nakajima categories. In this section, we will relate the two different approaches.

We denote $p: \cR^{gr}_C \to \cR$ the pushforward functor, 
$$p_*: \Mod \cR_C^{gr} \to \Mod \cR$$ 
 and 
$$p^*: \Mod \cR \to \Mod \cR^{gr}_C$$ the pullback functor. 
More concretely, they are given by 
$$p_*M (x) = \bigoplus_{n\in \Z} M(F^n x) \text{ for all }M \in \Mod \cR_C^{gr} $$ and 
$$p^*(N)(x) =N(p(x)) \text{ for all }N \in \Mod \cR.$$ 
The same functors exists at the level of $\cS^{gr}_C$ and $\cS$--modules. As $p$ commutes with the restriction functor, we will also denote the pushforward and pullback functors by 
$p_*: \Mod \cS^{gr} \to \Mod \cS$ and $p^*: \Mod \cS \to \Mod \cS^{gr}$ respectively. 
It is easy to see that the pushforward functor 
induces an embedding of graded quiver varieties into classical quiver varieties. 
To obtain the embedding of cyclic quiver varieties into classical quiver varieties, one considers the pushforward functor to the canonical functor $ \cR^{gr}/ \tau^n \to \cR\cong \cR^{gr}/ \tau$.


We call an $\cR^{gr}$--module (resp. an $\cS^{gr}$--module) $M$ {\em left bounded }if there is an $n \in \Z$ 
such that $M(m, i)=0 $ for all $m \le n$ and all vertices  $i$ of $Q^f_0$. Right bounded modules are defined analogously and  a 
bounded module is left and right bounded. 

\begin{lemma}\label{adjointness for p}
The functors $(p_*, p^*)$ are adjoint functors. 
If $A$ is a bounded $\cR^{gr}_C$--module (resp. $\cS^{gr}_C$--module) and $B$ a $\cR$--module (resp. $\cS$--module),  we also have a natural isomorphism 

$$\Hom(B, p_*A) \cong \Hom(p^*B, A).$$

\end{lemma}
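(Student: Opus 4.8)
The plan is to treat $p \colon \cR^{gr}_C \to \cR$ as the Galois covering associated with the free action of $\langle F \rangle \cong \Z$, and to read both isomorphisms off the explicit formulas for $p_*$ and $p^*$; the only real issue is the infinitude of the fibres, which is precisely what boundedness tames in the second assertion. For the adjunction $(p_*, p^*)$ I would establish the bijection $\Hom_{\cR}(p_* M, N) \cong \Hom_{\cR^{gr}_C}(M, p^* N)$ by hand. Choosing a representative $d$ of an object of $\cR$, a morphism $p_* M \to N$ provides at $d$ a linear map $\bigoplus_{n \in \Z} M(F^n d) \to N(d)$; since a map out of a coproduct is the same as a family of maps out of its summands, this is precisely a family $M(F^n d) \to N(d) = N(p(F^n d)) = (p^* N)(F^n d)$, indexed by the objects $F^n d$ of the fibre over $d$. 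Ranging over all objects of $\cR^{gr}_C$, such families are exactly the morphisms $M \to p^* N$, and the compatibility of the two families with the module structures translates into one another under the defining identity $\cR(pc, pc') = \bigoplus_{n} \cR^{gr}_C(c, F^n c')$ of the orbit category, which is the statement that $p$ is a covering functor. This makes the correspondence natural in $M$ and $N$, and no finiteness is needed, because maps out of a direct sum are unconstrained.

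For the second isomorphism I would run the same correspondence in the opposite variance, matching $\phi \colon B \to p_* A$ with $\psi \colon p^* B \to A$ by letting $\psi_{F^n d}$ be the $n$-th component of $\phi_d \colon B(d) \to \bigoplus_n A(F^n d)$. Passing from $\phi$ to $\psi$ (projection onto summands) is immediate; the content lies in the reverse direction, where from a family $(\psi_{F^n d})_n$ one must produce an honest map into the direct sum $\bigoplus_n A(F^n d)$. This works as soon as only finitely many summands $A(F^n d)$ are nonzero, so that $\prod_n \Hom(B(d), A(F^n d))$ collapses onto $\Hom(B(d), \bigoplus_n A(F^n d))$. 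Here I invoke boundedness: since $F$ acts without fixed points (Assumption~\ref{main-assumption}), the orbit map $n \mapsto F^n d$ is injective, while a bounded module $A$ has finite support, because a bounded window of levels of $\Z Q^f$ contains only finitely many vertices. Hence $\{n : A(F^n d) \neq 0\}$ is finite, the reverse assignment is defined, and naturality follows as before; the $\cS^{gr}_C$ and $\cS$ statement is identical after restricting to the frozen vertices.

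The crux is exactly this finiteness point. The pushforward $p_* A(d) = \bigoplus_n A(F^n d)$ a priori computes only the left adjoint (the left Kan extension) of $p^*$, whereas $\Hom(B, p_* A) \cong \Hom(p^* B, A)$ asks it to also compute the right adjoint, whose formula would instead be the product $\prod_n A(F^n d)$. The two agree precisely when the fibrewise sum is finite, and boundedness is what forces this; without it the map into the direct sum simply fails to exist. Everything else is the routine bookkeeping of covering adjunctions.
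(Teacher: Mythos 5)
The paper states Lemma~\ref{adjointness for p} without any proof at all---it is treated as routine covering-theory bookkeeping---so there is no argument of the paper's to compare yours against; what can be said is that your proof is correct and supplies exactly the argument the paper leaves implicit. Your pointwise correspondence for the first adjunction (a map out of $p_*M$ at $p(c)$ is a family of maps out of the summands $M(F^nc)$, reassembled via the orbit-category identity $\cR(pc,pc')=\bigoplus_n\cR^{gr}_C(c,F^nc')$) is the standard proof that the pushdown along a Galois covering is left adjoint to the pullback, and your diagnosis of the second isomorphism is precisely the right one: $\Hom(B,p_*A)\cong\Hom(p^*B,A)$ is the assertion that $p_*A$ also computes the \emph{right} adjoint of $p^*$, whose pointwise formula is the product $\prod_n A(F^nc)$ rather than the sum, and boundedness of $A$ is what makes the two coincide fibrewise. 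Your finiteness chain (bounded window $\Rightarrow$ finite support since $Q^f_0$ is finite $\Rightarrow$ finitely many $n$ with $A(F^nc)\neq 0$) is exactly where the hypothesis is consumed, and it is indeed indispensable: without it the candidate map into the direct sum does not exist. One minor caveat: your step ``the orbit map $n\mapsto F^nd$ is injective'' needs $F$ to act freely on \emph{objects}, which is slightly stronger than the literal wording of Assumption~\ref{main-assumption} ($F^n\neq\id$ as functors); however, the paper itself adopts this stronger reading (it asserts that $\Z Q_C\to\tilde Q$ is a Galois covering, and derives fixed-point-freeness on $\ind\cd_Q$ in Lemma~\ref{lemma: tilde P}), so your use of it is consistent with the paper's framework rather than a gap.
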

Recall from \cite{KellerScherotzke13a} that all indecomposable projective (resp. injective) modules are given by $x^{\wedge}$ (resp. $x^{\vee}$) for all objects $x$. Furthermore, they are pointwise finite-dimensional and right bounded (resp. left bounded). 

Both the pullback and pushforward functors are exact. Hence $p_*$ maps finitely generated projective modules of the graded Nakajima category to finitely generated projective modules of the generalized Nakajima category. 


In the next Lemma, we investigate the relationship between the Kan extensions in the graded and non-graded case. 
\begin{lemma}\label{lemma:kan-extensions-graded-ungraded}
Let $M$ be an $ \cS^{gr}_C$--module, then $p_* K^{gr}_L M \cong K_L p_* M$ and there is a canonical inclusion 
\[
i: p_* K^{gr}_R M \hookrightarrow K_R p_* M
\] whose restriction to $\cS$ is the identity. If $Q$ is of Dynkin type and $M$ is finite-dimensional, then $i$ is an isomorphism. 
\end{lemma}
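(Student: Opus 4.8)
The plan is to treat the two Kan extensions separately: the $K_L$-statement follows from a pure uniqueness-of-adjoints argument, while the $K_R$-statement splits into a stability argument (giving injectivity for all $M$) and a Frobenius-adjunction argument (giving the isomorphism under the Dynkin and finiteness hypotheses). For the left Kan extension I would argue that both $K_L p_*$ and $p_* K_L^{gr}$ are left adjoint to one and the same functor. Indeed $K_L$ is left adjoint to $\res$ and $p_*$ is left adjoint to $p^*$ by Lemma~\ref{adjointness for p}, so the composite $K_L p_*$ is left adjoint to $p^*\res$, while $p_* K_L^{gr}$ is left adjoint to $\res^{gr} p^*$. Since $p$ commutes with restriction one has $p^*\res \cong \res^{gr} p^*$ (both send $N$ to $x\mapsto N(px)$ on frozen $x$), and uniqueness of left adjoints yields $p_* K_L^{gr} M \cong K_L p_* M$ naturally in $M$. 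This step uses only the genuine adjunction $(p_*,p^*)$ and so holds for every $M$, with no boundedness hypothesis.

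For the right Kan extension I would first build $i$. Using $\res p_* K_R^{gr} M \cong p_* \res^{gr} K_R^{gr} M \cong p_* M$ — the first isomorphism because $p$ commutes with $\res$, the second because the counit $\res^{gr}K_R^{gr}\cong\id$ is invertible — the identity of $p_*M$ corresponds under the adjunction $\res \dashv K_R$ to a canonical map $i\colon p_* K_R^{gr} M \to K_R p_* M$; this $i$ is exactly the adjunction unit, so $\res(i)$ is the identity by construction. To see that $i$ is injective it suffices, by Lemma~\ref{lemma:stable}(1), to check that $p_* K_R^{gr} M$ is stable. Here I would use that $K_R^{gr} M$ is stable (right Kan extensions are stable, since the unit $K_R^{gr}M \to K_R^{gr}\res^{gr}K_R^{gr}M$ is invertible) and that $p^* p_* B \cong \bigoplus_{n} F^{n*}B$; each twist $F^{n*}(K_R^{gr}M)$ is stable and a direct sum of stable modules is stable, because any non-frozen-supported submodule projects to zero in every factor and is therefore zero. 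If $N \subseteq p_* K_R^{gr}M$ were a nonzero submodule supported on non-frozen vertices, then $\res N=0$, so applying the exact and faithful functor $p^*$ would embed the nonzero module $p^*N$, still supported on non-frozen vertices, into the stable module $p^* p_* K_R^{gr}M$ — a contradiction. Hence $p_* K_R^{gr} M$ is stable and $i$ is injective for every $M$.

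Finally, for the isomorphism when $Q$ is Dynkin and $M$ is finite-dimensional, I would invoke Lemma~\ref{lemma:stable}(4): since $p_* K_R^{gr} M$ is already stable, $i$ is invertible as soon as $\Ext^1_\cR(N', p_* K_R^{gr}M)=0$ for every $N'$ supported on non-frozen vertices. The key input is that $K_R^{gr}M$ is \emph{bounded}: it is left bounded because a finite injective copresentation of $M$ by modules $s^\vee$ maps under the left-exact $K_R^{gr}$ to a copresentation by the left bounded injectives $s^\vee_{\cR^{gr}}$, and it is right bounded because, $Q$ being Dynkin, for fixed $x$ only finitely many frozen $s$ admit a nonzero morphism to $x$, so $K_R^{gr}M(x)=\Hom_{\cS^{gr}}((x^\wedge)|_{\cS^{gr}}, M)$ vanishes once $x$ moves past the support of $M$. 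Boundedness lets me apply the Frobenius adjunction of Lemma~\ref{adjointness for p}; resolving $N'$ by projectives and using that $p^*$ is exact and sends projectives to projectives, this adjunction derives to $\Ext^1_\cR(N', p_* K_R^{gr}M)\cong \Ext^1_{\cR^{gr}}(p^*N', K_R^{gr}M)$, and the latter vanishes because $p^*N'$ is supported on non-frozen vertices while $K_R^{gr}M$, as a right Kan extension, has vanishing $\Ext^1$ against the kernel of $\res^{gr}$ by the graded form of Lemma~\ref{lemma:stable}(4). I expect the main obstacle to be exactly this boundedness: the relation $\Hom_\cR(B,p_*A)\cong\Hom_{\cR^{gr}}(p^*B,A)$ holds only for bounded $A$, which is precisely why the isomorphism — unlike the injectivity and the $K_L$-statement — needs the Dynkin hypothesis and finite-dimensionality of $M$.
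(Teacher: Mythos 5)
Your proof is correct, and for the first two assertions it follows the paper's own route: the $K_L$-statement is proved in the paper by exactly your uniqueness-of-adjoints argument (written out as a chain of Hom-isomorphisms), and the injectivity of $i$ is likewise obtained there from stability of $p_* K^{gr}_R M$ inherited from the graded setting, though the paper gets stability more directly from the injection $\Hom(S, p_* K^{gr}_R M) \hookrightarrow \Hom(p^*S, K^{gr}_R M)=0$ rather than via your decomposition $p^*p_*B \cong \bigoplus_n F^{n*}B$. Where you genuinely diverge is the final isomorphism. The paper repeats the Yoneda argument: since $K^{gr}_R M$ is bounded (asserted, not proved), the second adjunction of Lemma~\ref{adjointness for p} gives
$\Hom(L, p_* K^{gr}_R M) \cong \Hom(p^*L, K^{gr}_R M) \cong \Hom(p^*\res L, M) \cong \Hom(\res L, p_* M) \cong \Hom(L, K_R p_* M)$
for every $\cR$--module $L$ (using boundedness of $M$ as well), so $p_* K^{gr}_R M$ satisfies the universal property of $K_R p_* M$ and uniqueness of right adjoints concludes. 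You instead verify the invertibility criterion of Lemma~\ref{lemma:stable}(4): stability (already established) plus vanishing of $\Ext^1_{\cR}(N', p_* K^{gr}_R M)$ for $N'$ killed by $\res$, which you transfer to the graded side by deriving the bounded adjunction (projective resolutions, exactness of $p^*$, preservation of projectives) and quoting the graded form of the same criterion. Both arguments hinge on the same key input --- boundedness of $K^{gr}_R M$ in the Dynkin, finite-dimensional case --- and your route is somewhat heavier, since it needs the adjunction at the level of $\Ext^1$ rather than $\Hom$; what it buys is transparency about where the hypotheses enter, reuse of the stability statement, and, usefully, an actual proof of the boundedness of $K^{gr}_R M$ (via the injective copresentation and the finite-support argument for $\res x^\wedge$ in Dynkin type), which the paper leaves implicit.
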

\begin{proof}
To prove the first isomorphism, we use the adjointness relations
 \begin{eqnarray*}
 \Hom(p_* K^{gr}_L M, L) &\cong& \Hom( K^{gr}_L M , p^* L) \cong \Hom( M, \res p^* L) \\
 &=&  \Hom(p_* M, \res L)\cong \Hom(K_L p_* M, L)
\end{eqnarray*}
 for any $ L \in \Mod \cR$. Hence by the uniqueness of the left adjoint we find $p_* K^{gr}_L M \cong K_L p_* M$. 
Finally, we have 
\[
 \Hom(S, p_* K^{gr}_R M) \hookrightarrow  \Hom( p^* S , K^{gr}_R M) =0,
 \] and hence we have that $p_* K^{gr}_R M$ is stable and as $\res p_* K^{gr}_R M \cong M$ there is a canonical injection $p_* K^{gr}_R M \to K_R p_* M$ which restricts to the identity on $\cS$. If $Q$ is of Dynkin type and $M$ is finite-dimensional, then $K^{gr}_R M$ is bounded and we obtain
\begin{eqnarray*}
 \Hom(L, p_* K^{gr}_R M) &\cong & \Hom( p^* L, K^{gr}_R M ) \cong \Hom(p^* \res  L, M) \\
 &= & \Hom( \res L, p_* M) \cong \Hom(L, K_R p_* M ) 
\end{eqnarray*} for any $ L \in \Mod \cR$. Hence by the uniqueness of the right adjoint we find $p_* K^{gr}_R M \cong K_R p_* M$. 
\end{proof}
We obtain the following easy consequence.

\begin{lemma}\label{lemma: KK}
For $M \in \mod \cS^{gr}_C$, we have 
\[K_{LR} p_* M \cong p_* K^{gr}_{LR} M \text{ and } 
p_*KK^{gr}(M)\cong KK(p_* M).
\]
\end{lemma}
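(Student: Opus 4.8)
Here $M$ denotes a finite-dimensional $\cS_C^{gr}$--module and $p_*M \in \mod\cS$ its pushforward. The plan is to derive both isomorphisms formally from Lemma~\ref{lemma:kan-extensions-graded-ungraded} together with the exactness of $p_*$ (recorded after Lemma~\ref{adjointness for p}), the latter guaranteeing that $p_*$ commutes with kernels, cokernels and images.

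First I would establish $K_{LR}p_*M\cong p_*K_{LR}^{gr}M$. Write $c_M^{gr}\colon K_L^{gr}M\to K_R^{gr}M$ for the canonical morphism with image $K_{LR}^{gr}M$, and $c_{p_*M}\colon K_Lp_*M\to K_Rp_*M$ for its ungraded counterpart with image $K_{LR}p_*M$. Since $p_*$ is exact it preserves images, so $p_*K_{LR}^{gr}M=\im(p_*c_M^{gr})$. By Lemma~\ref{lemma:kan-extensions-graded-ungraded} there are a natural isomorphism $\phi_L\colon p_*K_L^{gr}M\iso K_Lp_*M$ and a natural monomorphism $i\colon p_*K_R^{gr}M\hookrightarrow K_Rp_*M$, both restricting to the identity on $\cS$. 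Granting that they fit into a commutative square
\[
\xymatrix{
p_*K_L^{gr}M \ar[r]^{p_*c_M^{gr}} \ar[d]_{\phi_L} & p_*K_R^{gr}M \ar[d]^{i} \\
K_Lp_*M \ar[r]_{c_{p_*M}} & K_Rp_*M,
}
\]
functoriality of the image factorization in $\Mod\cR$ yields a morphism $\im(p_*c_M^{gr})\to\im(c_{p_*M})$ which is an isomorphism because $\phi_L$ is invertible and $i$ is monic. This is precisely the desired $p_*K_{LR}^{gr}M\cong K_{LR}p_*M$, and notably it needs only that $i$ be a monomorphism, so no Dynkin hypothesis enters here.

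The hard part is the commutativity of this square, which I would verify using that morphisms into $K_Rp_*M$ are detected on $\cS$: the adjunction $\res\dashv K_R$ gives $\Hom_\cR(X,K_Rp_*M)\cong\Hom_\cS(\res X,p_*M)$, so two maps $X\to K_Rp_*M$ coincide once their restrictions to $\cS$ agree. Applying $\res$ to the two composites $i\circ(p_*c_M^{gr})$ and $c_{p_*M}\circ\phi_L$ out of $X=p_*K_L^{gr}M$, and using that $\res$ commutes with $p_*$, both are computed from the restrictions of $c_M^{gr}$, $c_{p_*M}$, $\phi_L$ and $i$ to the singular category. As the canonical morphisms restrict to the identity by their defining property, and $\phi_L$ and $i$ restrict to the identity by construction, both composites restrict to $\id_{p_*M}$; hence they are equal and the square commutes.

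Finally, for $p_*KK^{gr}(M)\cong KK(p_*M)$ I would apply the exact functor $p_*$ to the defining sequence
\[
0\to KK^{gr}(M)\to K_L^{gr}M\to K_{LR}^{gr}M\to0,
\]
obtaining the exact sequence $0\to p_*KK^{gr}(M)\to p_*K_L^{gr}M\to p_*K_{LR}^{gr}M\to0$, and compare it with $0\to KK(p_*M)\to K_Lp_*M\to K_{LR}p_*M\to0$. The isomorphism $\phi_L$ on the middle terms and the isomorphism $p_*K_{LR}^{gr}M\cong K_{LR}p_*M$ just produced on the right-hand terms intertwine the two surjections — this is again the functoriality of the image factorization used above, which makes the epimorphism part of the two factorizations commute. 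Since the surjections are identified by isomorphisms on source and target, their kernels correspond, giving $p_*KK^{gr}(M)\cong KK(p_*M)$.
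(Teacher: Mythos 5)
Your proof is correct and follows essentially the same route as the paper: both identify $K_{LR}p_*M$ with the image of the canonical map $p_*K^{gr}_LM \to p_*K^{gr}_RM$ using Lemma~\ref{lemma:kan-extensions-graded-ungraded} together with the exactness of $p_*$, and then deduce the statement about $KK$ by comparing kernels of the resulting epimorphisms onto the (identified) intermediate extensions. Your write-up simply makes explicit what the paper leaves implicit, namely the commutativity of the comparison square, which you verify correctly via the adjunction $\res \dashv K_R$, and the observation that only the monomorphy of $i$ (not the Dynkin hypothesis) is needed.
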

\begin{proof}
It follows from the previous proof that $K_{LR} p_* M$ is the image of the canonical map $p_* K^{gr}_L M \to p_* K^{gr}_R M$ and by the exactness of 
$p_*$, the image is $p_* K^{gr}_{LR} M$. 
By Lemma~\ref{lemma:kan-extensions-graded-ungraded}, the functor 
$p_*$ commutes with the  left Kan extension. Hence the second statement follows from the exactness of $p_*$. 
\end{proof}

\subsection{Homological properties of Nakajima categories}
First we determine the projective resolutions of nilpotent simple $\cR$--modules. For an object $x \in \cR_0$, we denote by $S_x$ the simple module given by the
functor that sends the object $x$ to $k$ and every non-isomorphic object to
zero.

\begin{lemma} \label{lemma: resolutions-of-simples-in -R}
Let $x$ be a non-frozen vertex.
\begin{itemize}
\item[a)] The nilpotent simple $\cR$--modules have projective resolutions given by

$$  0 \to   \tau(x)^{\wedge} \to \bigoplus_{y \to x} y^{\wedge} \to x^{\wedge} \to S_x \to 0$$ 
where the sum runs over all arrows $y \to x$ in $\tilde Q$.
If $x \in C$, then the projective resolution of $S_{\sigma x} $ is given by $$ 0 \to \tau(x)^{\wedge} \to \sigma(x)^{\wedge} \to S_{\sigma(x)} \to 0 .$$  

\item[b)] The nilpotent simple $\cR$--modules have injective resolutions given by
$$  0 \to S_x  \to  x^{\vee} \to \bigoplus_{x \to y} y^{\vee} \to \tau^{-1}(x)^{\vee} \to 0$$ where the sum runs over all arrows $x \to y$ in $\tilde Q$. 
If $x \in C$, then the injective resolution of $S_{\sigma x} $ is given by 
$$ 0 \to S_{\sigma(x)} \to {\sigma(x)}^{\vee} \to x^{\vee} \to 0 .$$
\end{itemize}
\end{lemma}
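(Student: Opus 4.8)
The plan is to write the three maps down explicitly, observe that the candidate complex is a complex because of the mesh relation at the non-frozen vertex $x$, and then check exactness spot by spot, deducing the one subtle point from the graded case. For the first sequence of a), the augmentation $x^{\wedge}\to S_x$ is the projection of $x^{\wedge}$ onto its top $x^{\wedge}/\rad x^{\wedge}\cong S_x$; the map $\bigoplus_{y\to x} y^{\wedge}\to x^{\wedge}$ is induced by post-composition with the arrows $\alpha\colon y\to x$ of $\tilde Q$; and $\tau x^{\wedge}\to \bigoplus_{y\to x} y^{\wedge}$ is induced by the companion arrows $\bar\alpha\colon \tau x\to y$. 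By the Yoneda isomorphism \eqref{eq:Yoneda}, the composite of the last two maps is precisely the mesh element $R_x=\sum_{\alpha\colon y\to x}\bar\alpha\alpha$, which vanishes in $\cR$ because $x$ is non-frozen; hence we do have a complex.

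First I would dispose of the two outer spots. Exactness at $S_x$ is the surjectivity of the projection onto the top, and exactness at $x^{\wedge}$ says that $\rad x^{\wedge}$ is generated by the arrows into $x$, which holds since the quiver of $\cR$ is $\tilde Q$ and its relations lie in the square of the arrow ideal. Exactness at $\tau x^{\wedge}$, i.e.\ injectivity of $\tau x^{\wedge}\to \bigoplus_{y\to x} y^{\wedge}$, is exactly the first left-exact sequence of Assumption~\ref{main-assumption} applied at the vertex $\tau x$, since the arrows out of $\tau x$ in $\tilde Q$ are precisely the $\bar\alpha$.

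The remaining, and main, point is exactness in the middle at $\bigoplus_{y\to x} y^{\wedge}$, where the mesh relation does genuine homological work. Rather than compute morphism spaces in $\cR$ directly, I would reduce to the graded category, where the mesh resolution of the simple $S^{gr}_{(x,p)}$ is available from \cite{KellerScherotzke13a}, and apply the exact pushforward $p_*$ introduced in the previous subsection: it sends the graded simple $S^{gr}_{(x,p)}$ to $S_x$, each graded projective $(y,q)^{\wedge}$ to the ungraded projective $y^{\wedge}$, and, via the Galois covering $\Z Q_C\to \tilde Q$, collects the arrows of $\Z Q_C$ lying over a fixed arrow of $\tilde Q$. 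Thus $p_*$ of the graded resolution is exactly the asserted exact sequence, middle exactness included. For the second statement of a) the vertex $\sigma(x)$ is \emph{frozen}, so no mesh relation is imposed there and the sole arrow into $\sigma(x)$ is $\tau x\to \sigma x$; the same pushforward argument, now applied to the length-one graded resolution $0\to \tau(x,p)^{\wedge}\to \sigma(x,p)^{\wedge}\to S^{gr}_{\sigma(x,p)}\to 0$, gives the claimed short resolution, the left-hand injectivity again coming from the fact that $\sigma(x)$ carries no relations.

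Finally, part b) I would obtain from a) by duality rather than by repeating the argument. The two sequences of Assumption~\ref{main-assumption} are interchanged on passing to the opposite category, so $\cR^{op}$ is again a regular Nakajima category (of $Q^{op}$, with $\tau$ replaced by $\tau^{-1}$ and the arrows reversed) satisfying the same hypotheses; applying the projective resolutions of a) in $\cR^{op}$ and then the $k$-duality $D$, which is exact and exchanges $x^{\wedge}$ with $x^{\vee}$, converts them into the injective resolutions of b). The main obstacle throughout is the middle exactness of a): once it is settled in the graded setting and transported by $p_*$, everything else is formal. The secondary technical point to watch is that, when $Q$ is not of Dynkin type, the relevant morphism spaces need not be finite-dimensional, so the duality must be phrased at the level of the opposite Nakajima category as above rather than by naively pushing graded injectives forward, since $p_*$ yields a direct sum where $x^{\vee}$ is a product.
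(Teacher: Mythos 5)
Your proof is correct and follows essentially the same route as the paper: the paper's proof consists precisely of applying the exact pushforward $p_*$ to the graded resolutions from \cite{KellerScherotzke13a} and then using a dual argument on $\cR^{op}$ for the injective resolutions. Your additional direct verifications (complex property via the mesh relation, exactness at the outer terms via Assumption~\ref{main-assumption}) are sound but redundant, since exactness of $p_*$ already transports the entire graded resolution at once.
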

\begin{proof}
We apply $p_*$ to the projective resolution of simple $\cR^{gr}$--modules given in \cite{KellerScherotzke13a}. As $p_*$  is an exact functor, we obtain the above exact sequence. 
Now applying a dual argument to $  \cR^{op}$ yields the injective resolutions.
\end{proof}
  
Next, we determine projective resolutions of simple nilpotent $\cS$--modules. 
Let $\cc$ be either $ \cS$ or $ \cR$. 
Let us denote by 
$$P(x):= \bigoplus_{y \in C/F} \cp(y,x)\otimes \sigma(y)^{\wedge} $$ the projective $\cc$--module and by 
$$I(x):= \prod_{y \in C/F} D \cp(x,y) \otimes \sigma(y)^{\vee} $$ the injective $\cc$--module which we associate to an object $x \in C$.
The graded analogues, where we replace $\cp$ with $\cd_Q$
 shall be denoted $P^{gr}(x)$ and $I^{gr}(x)$.
\begin{lemma}\label{lemma: homology of S}
Let $x \in C$.
\begin{itemize}
\item[a)] Let $Q$ be of Dynkin type, then the nilpotent simple $ \cS$--modules  have infinite projective resolutions
\[
 \cdots \to P(\Sigma^2 \tau x) \to P(\Sigma \tau x) \to P(\tau x) \to \sigma(x)^{\wedge} \to S_{\sigma(x)} \to 0 .
 \] 

\item[b)] If $Q$ is not of Dynkin type, we obtain a projective resolution for the simple nilpotent $\cS$--modules 
\[
 0 \to  P(\tau x) \to \sigma(x)^{\wedge} \to S_{\sigma(x)} \to 0.
 \]  
 
Dualizing these sequence yields injective resolutions. 
\end{itemize}
\end{lemma}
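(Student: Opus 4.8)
\emph{Strategy.} The plan is to construct the minimal projective $\cS$--resolution of $S_{\sigma(x)}$ by hand: start from the projective cover, read off the first syzygy from the already known $\cR$--resolution of Lemma~\ref{lemma: resolutions-of-simples-in -R} via the exact functor $\res$, and then iterate. The dichotomy in the statement is governed by the homological nature of $\cp$. In the Dynkin case $\cp$ is self-injective with $\proj\cp$ triangulated (Lemma~\ref{lemma: tilde P}), which forces an infinite, shift-periodic resolution; in the non-Dynkin case $\cp=\cp_Q$ has global dimension two (Section~\ref{ss: preprojective}), which makes the resolution stop after one step. The asserted injective resolutions then follow formally by applying the duality $D$ and passing to $\cR^{op}$, exactly as in Lemma~\ref{lemma: resolutions-of-simples-in -R}; this interchanges the modules $P(z)$ with $I(z)$ and reverses the sequences.

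\emph{First syzygy.} For $x\in C$ the projective cover is $\sigma(x)^{\wedge}\twoheadrightarrow S_{\sigma(x)}$. Applying the exact restriction functor $\res$ to the $\cR$--resolution $0\to(\tau x)^{\wedge}\to\sigma(x)^{\wedge}\to S_{\sigma(x)}\to 0$ of Lemma~\ref{lemma: resolutions-of-simples-in -R}a) identifies the first $\cS$--syzygy of $S_{\sigma(x)}$ with $\res(\tau x)^{\wedge}$. The key local computation is that every non-invertible morphism $\sigma(y)\to \tau x$ in $\cR$ factors through the framing arrow $\sigma(y)\to y$ followed by a path in the non-frozen part; reading this modulo the mesh relations and the ideal $\langle\cS\rangle$, i.e.\ using $\cp\cong\cR/\langle\cS\rangle$, the top of $\res(\tau x)^{\wedge}$ is $\bigoplus_{y\in C/F}\cp(y,\tau x)\otimes S_{\sigma(y)}$. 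Hence the projective cover of this syzygy is precisely $P(\tau x)=\bigoplus_{y\in C/F}\cp(y,\tau x)\otimes\sigma(y)^{\wedge}$, which supplies the degree-one term in both cases.

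\emph{Iteration.} If $Q$ is not of Dynkin type, I expect the comparison map $P(\tau x)\to\res(\tau x)^{\wedge}$ to be an isomorphism, so that the first syzygy is already projective and the resolution $0\to P(\tau x)\to\sigma(x)^{\wedge}\to S_{\sigma(x)}\to 0$ has length one; this is where the finite global dimension of the preprojective algebra enters. If $Q$ is of Dynkin type, this map has a nonzero kernel, and the heart of the argument is an inductive identification of the higher syzygies: I would show that the kernel of $P(\Sigma^{i}\tau x)\to\res(\Sigma^{i}\tau x)^{\wedge}$ is again of the form $\res(\Sigma^{i+1}\tau x)^{\wedge}$, so that the resolution continues with $P(\Sigma^{i+1}\tau x)$. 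The shift $\Sigma$ enters through the Serre/Calabi--Yau structure of $\proj\cp$ recorded in Lemma~\ref{lemma: tilde P}, since the non-frozen part of the syzygy of a simple is governed by the shift translation, and the self-injectivity of $\cp$ guarantees that the process never terminates. As a cross-check one may instead reduce to the graded resolutions of \cite{KellerScherotzke13a} and apply the exact pushforward $p_*$, which carries $\sigma(y)^{\wedge}$, $P^{gr}(z)$ and $S^{gr}_{\sigma(x)}$ to $\sigma(y)^{\wedge}$, $P(z)$ and $S_{\sigma(x)}$ respectively.

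\emph{Main obstacle.} The delicate point is the inductive step in the Dynkin case: one must verify that passing from one syzygy to the next applies precisely one copy of the shift $\Sigma$ and no more, i.e.\ that the mesh relations propagate through $\cS$ so as to turn $\res(\Sigma^{i}\tau x)^{\wedge}$ into $\res(\Sigma^{i+1}\tau x)^{\wedge}$. Controlling this cleanly is exactly where the triangulated structure of $\proj\cp$ and the relation $F^{m}\cong\Sigma^{h}$ from Lemma~\ref{lemma: tilde P} do the real work; once the one-step computation is in place, the infinite resolution follows by induction, and the non-Dynkin statement is the degenerate case in which the iteration halts at the first step.
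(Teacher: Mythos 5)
Your primary argument has a genuine gap exactly at the step you yourself flag as the ``main obstacle''. The first move is fine: applying the exact functor $\res$ to the $\cR$--resolution of Lemma~\ref{lemma: resolutions-of-simples-in -R} does identify the first syzygy of $S_{\sigma(x)}$ over $\cS$ with $\res (\tau x)^{\wedge}$. But everything after that is asserted rather than proven: that the projective cover of $\res(\Sigma^i\tau x)^{\wedge}$ is $P(\Sigma^i\tau x)$, and crucially that the kernel of $P(\Sigma^{i}\tau x)\to\res(\Sigma^{i}\tau x)^{\wedge}$ is $\res(\Sigma^{i+1}\tau x)^{\wedge}$ in the Dynkin case and zero otherwise, is precisely the content of the lemma, and you only say ``I would show'' / ``I expect''. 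The pointers you offer do not close this: the triangulated structure of $\proj\cp$ from Lemma~\ref{lemma: tilde P} says nothing by itself about syzygies of $\cS$--modules --- the bridge between the two (that the syzygy operation on Gorenstein projective $\cS$--modules matches the shift under $\ul{\proj}\,\cR\cong\proj\cp$) is established only later, in Theorem~\ref{thm: exact R}, and in \cite{KellerScherotzke13a} the analogous stable-category equivalence is itself deduced using these resolutions, so invoking it here is circular. Similarly, in the non-Dynkin case the needed fact is that $\res(\tau x)^{\wedge}$ is already projective over $\cS$, a computation with morphism spaces of $\cR$ that you never perform; ``$\cp_Q$ has global dimension two'' is not a substitute --- indeed the paper's Corollary~\ref{corollary: global dimension} deduces heredity of nilpotent $\cS$--modules \emph{from} this lemma, not the other way around.

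The route you relegate to a ``cross-check'' is the paper's actual, complete proof, and it should have been your main argument: apply the exact pushforward $p_*$ to the graded resolutions of \cite{KellerScherotzke13a}. Then $p_*$ sends $S^{gr}_{\sigma(x)}$ to $S_{\sigma(x)}$ and $\sigma(x)^{\wedge}$ to $\sigma(x)^{\wedge}$, and the only point with real content is the identification $p_*\bigl(P^{gr}(z)\bigr)\cong P(z)$, which follows by grouping the sum defining $P^{gr}(z)$ into $F$--orbits and using
\[
\bigoplus_{l\in\Z}\cd_Q(F^l(y),z)\cong\cp(y,z),
\]
i.e.\ the definition of $\cp$ as an orbit category. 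With these identifications the graded resolutions push forward term by term to the stated ones, and dualizing gives the injective resolutions. This short argument replaces your entire inductive scheme; carrying out your direct approach honestly would amount to redoing the graded mesh-relation computation of \cite{KellerScherotzke13a} from scratch.
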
 
\begin{proof}
We apply $p_*$ to the projective resolution of the simple $\cS^{gr}$--module $S_{\sigma(x)}$ given in \cite{KellerScherotzke13a} and we use that 
\[
p_*(P^{gr}( x) )= \bigoplus_{y \in \cd_Q} \cd_Q (y, x) \otimes p_*\sigma(y)^{\wedge}
\]
and 
\[
 \bigoplus_{l \in \Z} \cd_Q(F^l(y), x) \cong \cp(y, x).
\] Dually, we obtain the injective resolution. 
\end{proof}
We summarize some consequences of the previous Lemmata. 
\begin{corollary}\label{corollary: global dimension}
\begin{itemize} 
\item The category of finite-dimensional nilpotent $\cR$--modules has global dimension $2$. 
\item If $Q$ is of Dynkin type, the 
category of finite-dimensional nilpotent $\cS$--modules has infinite global dimension and it is hereditary if $Q$ is not of Dynkin type.
\end{itemize}
\end{corollary}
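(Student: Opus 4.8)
The plan is to compute each global dimension as the supremum, over the (nilpotent) simple objects, of their projective dimensions, reading these off from the explicit resolutions established in Lemmata~\ref{lemma: resolutions-of-simples-in -R} and~\ref{lemma: homology of S}. The reduction to simples is a standard dévissage: every finite-dimensional nilpotent module has a finite composition series whose factors are simple, so if $\Ext^n$ vanishes between all simples then it vanishes between all finite-dimensional nilpotent modules, and the global dimension of the category equals $\sup_{S,T}\{n : \Ext^n(S,T)\neq 0\}$ with $S,T$ running over the simple modules. Since all simple $\cR$--modules are the $S_x$ (and all nilpotent simple $\cS$--modules are the $S_{\sigma(x)}$), the resolutions of the previous two lemmata control everything.

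For the first assertion I would invoke Lemma~\ref{lemma: resolutions-of-simples-in -R}: a nilpotent simple $S_x$ at a non-frozen vertex has a projective resolution of length two, and $S_{\sigma(x)}$ one of length one. Hence every simple has projective dimension at most two and the global dimension is at most two. To see the bound is attained it suffices to produce one nonvanishing $\Ext^2$. Applying $\Hom(-,S_{\tau x})$ to the length-two resolution of $S_x$ and using the Yoneda identity $\Hom(z^{\wedge},S_w)=S_w(z)$, the terms $\Hom(x^{\wedge},S_{\tau x})$ and $\Hom(\bigoplus_{y\to x}y^{\wedge},S_{\tau x})$ both vanish — the latter precisely because $\tilde Q$ has no arrow $\tau x\to x$ — while $\Hom(\tau(x)^{\wedge},S_{\tau x})=k$. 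Thus $\Ext^2(S_x,S_{\tau x})=k\neq 0$ and the global dimension is exactly two.

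For the $\cS$--modules I would follow the dichotomy already visible in Lemma~\ref{lemma: homology of S}. When $Q$ is not of Dynkin type, every nilpotent simple $S_{\sigma(x)}$ has the length-one projective resolution of part b), so $\operatorname{pd}S_{\sigma(x)}\le 1$; as $\cS$ is not semisimple the simples are non-projective, forcing $\operatorname{pd}S_{\sigma(x)}=1$ and a hereditary category. When $Q$ is of Dynkin type, part a) exhibits an infinite projective resolution of $S_{\sigma(x)}$, and the task is to verify it genuinely does not terminate. Here I would use Lemma~\ref{lemma: tilde P}: the algebra $\cp$ is self-injective and of finite representation type, so the terms $P(\Sigma^i\tau x)$ are nonzero for all $i$ and the resolution is (eventually) periodic; over a self-injective non-semisimple algebra a non-projective module never has finite projective dimension, and this forces $\operatorname{pd}S_{\sigma(x)}=\infty$, hence infinite global dimension.

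The main obstacle lies entirely in the two lower bounds; the upper bounds are immediate from the displayed resolutions. For $\cR$ the lower bound reduces to the minimality of the length-two resolution, i.e. to the combinatorial observation that $\tau x$ and $x$ are not joined by an arrow of $\tilde Q$, which is what makes the relevant cohomology group nonzero. For $\cS$ in the Dynkin case the obstacle is to exclude that the infinite resolution of Lemma~\ref{lemma: homology of S} secretly splits off into a finite one; this is exactly the point at which the self-injectivity of $\cp$ from Lemma~\ref{lemma: tilde P} is indispensable, since it is inherited by the $\cS$--module $S_{\sigma(x)}$ through that resolution and guarantees non-termination.
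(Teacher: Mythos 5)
Your overall strategy coincides with the paper's: the corollary is stated there without proof, as a direct consequence of Lemmata~\ref{lemma: resolutions-of-simples-in -R} and~\ref{lemma: homology of S} via exactly the d\'evissage to simples that you describe, and your upper bounds, as well as the non-Dynkin statement for $\cS$, are fine. However, your lower bound for $\cR$ rests on claims that are false in general, because $x$, $\tau x$ and the sources of the arrows of $\tilde Q$ are objects of the \emph{orbit} category, not of $\Z Q$. In the classical case $F=\tau$ (the most important one, where $\cp=\cp_Q$), the vertices $x$ and $\tau x$ lie in the same $F$--orbit, so $S_{\tau x}=S_x$ and $\Hom(x^{\wedge},S_{\tau x})=S_x(x)=k\neq 0$; and $\tilde Q$ may well contain an arrow $\tau x\to x$: for $Q=A_2$, $F=\Sigma\tau$ and $C=(\Z Q)_0$ (an admissible pair, since $F^{2}=\tau^{-1}$), the category $\cp\cong k[f]/(f^{2})$ has a single non-frozen object carrying a loop and $\tau x=x$, so the middle term of your complex is nonzero as well. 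Your conclusion $\Ext^{2}(S_x,S_{\tau x})\neq 0$ is nevertheless correct, but for a reason you never invoke: the resolution of Lemma~\ref{lemma: resolutions-of-simples-in -R} is \emph{minimal} --- its differentials are given by composition with arrows of $\tilde Q$, and every arrow acts as zero on any simple module --- so applying $\Hom(-,T)$ for simple $T$ yields a complex with zero differentials, whence $\Ext^{i}(S_x,T)\cong\Hom(P_i,T)$ and in particular $\Ext^{2}(S_x,S_{\tau x})\cong S_{\tau x}(\tau x)=k$, independently of what $\tilde Q$ looks like near $x$.

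The serious gap is in the Dynkin case for $\cS$. Exhibiting an infinite projective resolution does not preclude finite projective dimension, and your way of excluding this --- that the self-injectivity of $\cp$ is ``inherited by the $\cS$--module $S_{\sigma(x)}$ through that resolution'' --- is not an argument: the fact that a non-projective module over a self-injective algebra has infinite projective dimension applies to $\cp$--modules, whereas $S_{\sigma(x)}$ is a module over $\cS$, and $\cS$ is not self-injective in general. What closes the gap is again minimality: the differentials in Lemma~\ref{lemma: homology of S} are radical morphisms (they arise by applying the exact functor $p_*$ to the minimal graded resolution of \cite{KellerScherotzke13a}; this is also what Proposition~\ref{proposition: quiver of S} implicitly uses when it reads off $\Ext^{1}$ and $\Ext^{2}$ from this very resolution), so that
$\Ext^{i}_{\cS}(S_{\sigma(x)},S_{\sigma(y)})\cong\Hom(P(\Sigma^{i-1}\tau x),S_{\sigma(y)})\cong D\cp(y,\Sigma^{i-1}\tau x)$ for all $i\geq 1$. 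Since $F^{m}=\Sigma^{h}$ on $\cd_Q$ (proof of Lemma~\ref{lemma: tilde P}), the functor $\Sigma$ has finite order on $\cp$, so these groups are periodic in $i$; by minimality, exactness and the Nakayama lemma, each nilpotent simple therefore has projective dimension either $0$ or $\infty$, and it remains only to know that $P(\tau x)\neq 0$, i.e.\ that $\sigma(x)^{\wedge}$ is not simple. This last nonvanishing is a consequence of the admissibility of $C$ (Assumption~\ref{main-assumption}), not of the self-injectivity or the finite representation type of $\cp$, so your appeal to those properties does not supply it either.
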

Note also that as an algebra, $\cS$ is  finitely generated if $Q$ is of Dynkin type.
The results on the homological properties of $\cS$ allow us to describe $\cS$ 
as a quotient $kQ_{\cS}/I$, where $Q_{\cS}$ is a finite quiver and 
$I$ an admissible ideal of the path algebra $k Q_{\cS}$ cf.~\cite[Ch.~8]{GabrielRoiter92} \cite[II.3]{AssemSimsonSkowronski06}. 
We determine the quiver $Q_{\cS}$ and the number of minimal relations between two vertices. 
\begin{proposition}\label{proposition: quiver of S}
Let $Q$ be of Dynkin type. Then the quiver $Q_{\cS}$ has vertices $ x\in C$ and the number of arrows $x \to y $ is equal to $\dim   \cp(y, \Sigma x)$. The number of minimal relations of paths from $x $ to $y$ is given by the dimension of $\cp(y, \Sigma^2 x)$. 
\end{proposition}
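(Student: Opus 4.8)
The plan is to reduce the statement to a computation of $\Ext$-groups between the simple $\cS$-modules and then evaluate these using the projective resolutions already recorded in Lemma~\ref{lemma: homology of S}. Since $Q$ is of Dynkin type, $\cS$ is finite-dimensional and, as noted just above the statement, admits an admissible presentation $\cS\cong kQ_\cS/I$; by the standard description of the Gabriel quiver and its relations (see \cite[II.3]{AssemSimsonSkowronski06}), the vertices of $Q_\cS$ are the simples $S_{\sigma x}$, $x\in C$, the number of arrows between two vertices equals the dimension of an $\Ext^1$-space between the corresponding simples, and the number of minimal relations equals the dimension of the corresponding $\Ext^2$-space. Because $\cS$-modules are functors $\cS^{op}\to\Mod k$, the variance reverses direction, so I would identify an arrow $x\to y$ of $Q_\cS$ with a basis vector of $\Ext^1_\cS(S_{\sigma y},S_{\sigma x})$, and likewise for relations with $\Ext^2_\cS(S_{\sigma y},S_{\sigma x})$. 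Thus it suffices to compute these two groups.

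First I would feed the minimal projective resolution of Lemma~\ref{lemma: homology of S}a,
\[
\cdots \to P(\Sigma^2\tau y) \to P(\Sigma\tau y) \to P(\tau y)\to \sigma(y)^\wedge \to S_{\sigma y}\to 0,
\]
into $\Hom_\cS(-,S_{\sigma x})$. As the resolution is minimal, each differential has image in the radical of the next term, so the induced differentials on the $\Hom$-complex vanish and $\Ext^i_\cS(S_{\sigma y},S_{\sigma x})$ is simply $\Hom_\cS$ of the degree-$i$ term into $S_{\sigma x}$. Using the defining formula $P(z)=\bigoplus_{w\in C/F}\cp(w,z)\otimes\sigma(w)^\wedge$ together with the Yoneda isomorphism \eqref{eq:Yoneda}, the space $\Hom_\cS(\sigma(w)^\wedge,S_{\sigma x})$ is one-dimensional exactly when $w$ and $x$ lie in the same $F$-orbit and vanishes otherwise; hence $\dim\Hom_\cS(P(z),S_{\sigma x})=\dim\cp(x,z)$. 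Applying this to the degree-$1$ and degree-$2$ terms yields $\dim\Ext^1_\cS(S_{\sigma y},S_{\sigma x})=\dim\cp(x,\tau y)$ and $\dim\Ext^2_\cS(S_{\sigma y},S_{\sigma x})=\dim\cp(x,\Sigma\tau y)$.

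It then remains to trade the Auslander--Reiten translate $\tau$ for the shift $\Sigma$, and this is where Lemma~\ref{lemma: tilde P} enters. Its isomorphism $x^\vee_\cp\cong (S^{-1}x)^\wedge_\cp$, together with the fact that $F$ (and hence the descended structure on $\proj\cp$) commutes with both $\tau$ and $\Sigma$, expresses the Serre functor of the triangulated category $\proj\cp$ through $\tau$ and $\Sigma$; combined with the observation (recorded after Lemma~\ref{lemma: tilde P}) that the Auslander--Reiten quiver of $\proj\cp$ is $\Z Q/F$ with translate $\tau$, one gets Serre duality in the form $\dim\cp(a,b)=\dim\cp(b,\tau\Sigma a)$. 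Since $\tau$ and $\Sigma$ are commuting autoequivalences, applying this to $a=x,\ b=\tau y$ gives $\dim\cp(x,\tau y)=\dim\cp(\tau y,\tau\Sigma x)=\dim\cp(y,\Sigma x)$, which is precisely the claimed arrow count. Carrying out the analogous manipulation one degree higher should convert $\dim\cp(x,\Sigma\tau y)$ into $\dim\cp(y,\Sigma^2 x)$, the claimed number of minimal relations.

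I expect the main obstacle to be exactly this last bookkeeping step. Getting the arrow count right requires pinning down simultaneously (i) the direction in which an arrow of $Q_\cS$ corresponds to a nonzero $\Ext^1$ between simples in the functor-category convention, and (ii) the precise form of the fractional Calabi--Yau/Serre duality on $\proj\cp$ that turns a $\tau$-shift into a $\Sigma$-shift. For the relations one must be especially careful that the higher syzygies of $S_{\sigma y}$ shift by $\Sigma$ in the direction \emph{compatible} with Serre duality, so that one genuinely recovers $\Sigma^2$ rather than an inverse power or a $\tau$-only expression; tracking these two sign conventions so that they cancel is the entire content of the proof. The remaining ingredients---minimality of the resolution in Lemma~\ref{lemma: homology of S} and the Yoneda evaluation of $\Hom_\cS(P(z),S_{\sigma x})$---are routine.
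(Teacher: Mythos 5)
Your overall strategy is exactly the paper's: the paper's proof also consists of identifying the vertices with the objects of $\cS$, identifying arrow and relation counts with $\dim\Ext^1_\cS$ and $\dim\Ext^2_\cS$ between simples, and evaluating these via the resolution of Lemma~\ref{lemma: homology of S}. Your treatment of the \emph{arrows} is correct and in fact more careful than the paper's one-line argument: you fix the functor-category convention (arrows $x\to y$ correspond to $\Ext^1_\cS(S_{\sigma(y)},S_{\sigma(x)})$), read off $\dim\Ext^1_\cS(S_{\sigma(y)},S_{\sigma(x)})=\dim\cp(x,\tau y)$ from the degree-one term $P(\tau y)$, and convert by Serre duality $\cp(a,b)\cong D\cp(b,\tau\Sigma a)$, which does descend to the orbit category because $F$ commutes with $\tau$ and $\Sigma$; this gives $\dim\cp(y,\Sigma x)$ as claimed. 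This conversion step is left completely implicit in the paper.

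For the \emph{relations}, however, there is a genuine gap, precisely at the spot you flagged and deferred. The ``analogous manipulation one degree higher'' does not produce $\Sigma^2$: since $\tau$ and $\Sigma$ commute, Serre duality gives
\[
\cp(x,\Sigma\tau y)\;\cong\; D\cp(\Sigma\tau y,\tau\Sigma x)\;\cong\; D\cp(y,x),
\]
the $\tau\Sigma$ simply cancels. So, taking the degree-two term of the displayed resolution at face value as $P(\Sigma\tau y)$, your computation outputs $\dim\cp(y,x)$, which differs from the claimed $\dim\cp(y,\Sigma^2 x)$ whenever $\Sigma^2\not\cong\id$ on $\proj\cp$ (for instance for $F=\Sigma\tau^{-1}$, where $\proj\cp$ is a cluster category). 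The count in the proposition comes out only if the higher syzygies shift by $\Sigma^{-1}$ rather than $\Sigma$, i.e.\ the degree-$i$ term of the minimal resolution is $P(\Sigma^{1-i}\tau y)$; then indeed
\[
\Ext^2_\cS(S_{\sigma(y)},S_{\sigma(x)})\;\cong\;\cp(x,\Sigma^{-1}\tau y)\;\cong\; D\cp(\Sigma^{-1}\tau y,\tau\Sigma x)\;\cong\; D\cp(y,\Sigma^2 x).
\]
This direction is not optional bookkeeping: it is forced by the triangulated structure, since under $\ul{\gpr}(\cS)\simeq\proj\cp$ the syzygy functor is the \emph{inverse} suspension, so the successive syzygies $\res(\tau y)^{\wedge},\res(\Sigma^{-1}\tau y)^{\wedge},\dots$ move against $\Sigma$; one can also confirm it by a direct computation of the minimal resolution in type $A_2$, where the minimal relations of $\cS^{gr}$ occur exactly at $\tau^{-1}\Sigma x$ and $\Sigma^{2}x$. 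So to complete the proof you must re-derive (rather than quote) the shift direction of the resolution; as written, your argument for the relation count yields a different, incorrect formula, and your own closing remark correctly identifies this unresolved sign issue as the entire content of the statement.
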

\begin{proof}
As all objects in $\cS$ are pairwise non-isomorphic, the vertices of $Q_{\cS}$ are in bijection with the objects of $\cS$, which are in bijection via $\sigma$ with the objects in $C/F$. Let us hence denote the vertices of $Q_{\cS}$ by the objects in $C$.
The number of arrows from $x$ to $y$ is the dimension of $ \Ext^1_{\cS}(S_{\sigma(x)}, S_{\sigma(y)}) $ and the number of minimal relations between paths from $x$ to $y$ is the dimensions of $\Ext^2_{\cS}( S_{\sigma(x)}, S_{\sigma(y)})$ which by Lemma~\ref{lemma: homology of S} is given by  the dimensions of $\cp(y, \Sigma x)$ and $\cp(y, \Sigma^2 x)$ respectively.
\end{proof}
We determine $\cS$  in terms of a quiver with relations for two different choices of functors $F$. 
\begin{example}
Let $Q$ be the $A_2$ quiver $1 \to 2$.  We choose $F=\tau$ and in this case $Q_{\cS}$ is given by 
\[ \xymatrix{ 1 \ar@/^/[d]|f   \ar@(ul,ur)| d\\
2 \ar@/^/[u] |g    \ar@(dl,dr)| e}
\] 
The minimal relations defining $\cS$ are $d^3-fg=0$, $e^3-gf=0$, $df-fe=0$ and $eg-gd=0$. To obtain the minimal relations one computes that $\cp_Q(x,y) $ is one dimensional 
for any choice of vertices $x$ and $y$. It follows that there is exactly one minimal relation between paths joining any two vertices $x$ and $y$. As $\cS$ is an orbit category of $\cS^{gr}$, the exact minimal relations can also be obtained using the Example of \cite{KellerScherotzke13a} after Lemma 2.7. 
\end{example}

\begin{example}
Let $F=\Sigma \tau^{-1} $, then $\cp$ is the cluster category and $\cS$ is given as the path algebra of the quiver 
\[
\xymatrix{ 
& &S_1  \ar@<1ex>[lld]^a  \ar@<1ex>[rrd]^b && \\
S_2 \ar@<1ex>[dr]^a  \ar@<1ex>[rru]^b&& && \Sigma S_1  \ar@<1ex>[ld]^b  \ar@<1ex>[llu]^a\\
& \Sigma P_2 \ar@<1ex>[ul]^b \ar@<1ex>[rr]^a & & P_2 \ar@<1ex>[ll]^b    \ar@<1ex>[ru]^a &
}
\] subject to the relations $ab=ba$, $a^3=b^2$ and $b^3=a^2$. 
\end{example} 

We can classify simple $\cR$--modules in terms of simple $\cp$ and simple $\cS$--modules. 
\begin{lemma}
The intermediate extension $K_{LR}$ establishes a bijection between simple $\cS$--modules and all simple $\cR$--modules which do not vanish under restriction to $\cS$. 
\end{lemma}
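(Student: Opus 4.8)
The plan is to show that $K_{LR}$ and $\res$ restrict to mutually inverse bijections on the relevant isomorphism classes of simple objects, using two structural facts already available: the image of $K_{LR}$ on $\mod \cS$ is exactly the full subcategory of bistable $\cR$--modules (the remark following Lemma~\ref{lemma:stable}), so that $\res K_{LR} \cong \mathrm{id}$ and $K_{LR}$ is fully faithful; and $\res$ is exact with kernel the modules supported only on non-frozen vertices. Since every simple $\cS$-- or $\cR$--module is of the form $S_x$, hence finite-dimensional, I may work inside $\mod \cS$ and $\mod \cR$ throughout.

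First I would treat the forward direction. Given a simple $\cS$--module $S$, set $T = K_{LR}(S)$; then $\res T = S \neq 0$, so $T$ does not vanish under restriction. To see $T$ is simple, take any nonzero submodule $0 \neq U \subseteq T$. Exactness of $\res$ and simplicity of $S$ force $\res U$ to be $0$ or $S$. If $\res U = 0$ then $U$ is a nonzero submodule supported only on non-frozen vertices, contradicting the stability of $T$; hence $\res U = S$, so the exact sequence $0 \to \res U \to \res T \to \res(T/U) \to 0$ gives $\res(T/U) = 0$, and then the costability of $T$ forces $T/U = 0$. Thus the only submodules of $T$ are $0$ and $T$.

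Next, the converse. Let $T$ be a simple $\cR$--module with $\res T \neq 0$. As $T$ is simple its only sub- and quotient modules are $0$ and $T$, and $\res T \neq 0$ means $T$ is not supported only on non-frozen vertices; hence $T$ is stable and costable, so by Lemma~\ref{lemma:stable}(3) we have $T \cong K_{LR}(\res T)$. It remains to prove that $S := \res T$ is simple, and this is the main obstacle: because $K_{LR}$ is not exact one cannot simply transport a short exact sequence of $\cS$--modules. The key device is full faithfulness. Suppose $N \subsetneq S$ is a nonzero proper submodule with inclusion $\iota$, and apply $K_{LR}$ to obtain $\phi = K_{LR}(\iota)\colon K_{LR}(N) \to K_{LR}(S) = T$ with $\res \phi = \iota$ (up to the canonical isomorphism $\res K_{LR} \cong \mathrm{id}$). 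Since $\iota \neq 0$ we have $\phi \neq 0$, so simplicity of $T$ makes $\phi$ surjective; applying the exact functor $\res$ then makes $\iota = \res \phi$ surjective, forcing $N = S$, a contradiction. (Equivalently, one may invoke that the Gabriel localization $\res$ induces a bijection on simple objects outside its kernel.)

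Finally I would assemble the bijection. The forward step shows $S \mapsto K_{LR}(S)$ maps simple $\cS$--modules to simple $\cR$--modules not killed by $\res$; the identity $\res K_{LR} \cong \mathrm{id}$ shows this assignment is injective on isomorphism classes; and the converse step shows every such simple $\cR$--module $T$ equals $K_{LR}(\res T)$ with $\res T$ simple, giving surjectivity. Hence $K_{LR}$ and $\res$ are mutually inverse bijections between the isomorphism classes of simple $\cS$--modules and those of simple $\cR$--modules not vanishing under restriction.
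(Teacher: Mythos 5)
Your proof is correct, and while it follows the same skeleton as the paper's (show $K_{LR}S$ is simple; show that a simple $T$ with $\res T \neq 0$ is bistable, hence $T \cong K_{LR}\res T$ with $\res T$ simple), the two simplicity arguments are executed by genuinely different tactics. For the forward direction, the paper picks a simple submodule $S'$ of $K_{LR}S$, shows it is bistable and restricts to $S$, and invokes the uniqueness of the bistable lift to conclude $S' \cong K_{LR}S$; this implicitly uses that a nonzero finite-dimensional module has a simple submodule and that a submodule abstractly isomorphic to the ambient module must equal it. You instead run the stability/costability dichotomy directly on an arbitrary nonzero submodule $U$ via exactness of $\res$, which avoids any appeal to socles. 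For the converse, the paper takes a simple submodule $L'$ of $\res L$ and uses that $K_{LR}$ preserves monomorphisms to produce a simple submodule $K_{LR}L' \subseteq L$; you instead apply $K_{LR}$ to a hypothetical proper inclusion $N \subsetneq \res T$ and play the simplicity of $T$ against the exactness of $\res$, which needs only functoriality of $K_{LR}$ and the natural isomorphism $\res K_{LR} \cong \mathrm{id}$ (both available from Lemma~\ref{lemma:stable} and the remark following it), not mono-preservation or existence of simple submodules. The net effect is that your argument works verbatim in $\Mod$ without finiteness hypotheses, which is a genuine (if modest) gain in generality and robustness. One caveat: your opening claim that every simple $\cS$-- or $\cR$--module is of the form $S_x$ is false in general --- for instance, when $Q$ is not of Dynkin type there exist finite-dimensional non-nilpotent simple $\cp$--modules (Crawley--Boevey), and these yield simple $\cR$--modules supported on non-frozen vertices that are not of this form --- but since nothing in your argument actually uses this claim (or finite-dimensionality at all), it is a harmless misstatement that you should simply delete.
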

\begin{proof}
Let $S$ be a simple $\cS$--module. Let $S'$ be a non-trivial simple submodule $S' \le K_{LR} S$. As $K_{LR} S$
is stable, so is $S'$ and we find that $$ 0 \not = \res S' \le \res K_{LR} S \cong S$$ and hence $\res S' \cong S$. Furthermore $S'$ is costable as 
it is simple and has non-zero support in frozen vertices. As $K_{LR} S$ is the unique bistable module up to isomorphism whose restriction is isomorphic to $S$, we have that $S' \cong K_{LR} S$. 
As every simple $\cR$--module whose restriction to $\cS$ is non-zero is bistable, we have that $K_{LR} S$ is the unique simple lift of $S$. 
Now let $L$ be a simple $\cR$--module. Clearly $\res L$ vanishes if and only if $L$ is supported only in non-frozen vertices. Suppose that $\res L$ does not vanish. Then $L$ is stable and $K_{LR} (\res L)$ is a submodule of $L$. Hence $K_{LR} (\res L) \cong  L$ is simple. Suppose that $L'$ is a simple submodule of $\res L$. Then $ K_{LR} (\res L')$ is also a simple submodule of $L$ and by the identity 
\[
L' \cong \res K_{LR} (\res  L')  \cong \res L
\] we have that $L' \cong \res L$. 
\end{proof}
\begin{remark} Note that all simple $\cR$--modules are finite-dimensional, as $\cR$ is finitely generated. It follows from the previous Lemma, that the same
holds for simple $\cS$--modules.
\end{remark} 
\section{The Stratification functor of affine quiver varieties}

For $Q$ of Dynkin type, we construct a functor
$$\Psi: \mod \cS \to \proj \cp$$
which parametrizes the strata in the following way: two points $M$ and $M_0$
lie in the same stratum of $\cm_0(w)$ only if they are mapped to isomorphic
images under $\Psi \circ \res$. Furthermore, we show that 	
$\Psi$ describes the degeneration order between strata.
Recall that the strata of $\cm_0(w)$ are given by the images of $\cm^{reg}(v,w)$ under $\pi$ (see Definition \ref{strata} ). 

\begin{lemma}
Let $M \in \mod \cS$. Then the functor $KK$ and $CK$ satisfy
\[
\Ext^1( K_{LR} M, S) = \Hom(KK (M) , S) \text{ and } \Ext^1(S,  K_{LR} M) = \Hom(S, CK (M) )
\]
for all $\cR$--modules $S$ which are supported in $\cR_0 -\cS_0$. 
Furthermore $\Ext^1(KK (M), S )$ and $\Ext^1(S, CK (M))$ vanish for all finite-dimensional nilpotent modules $S$ which are supported in $\cR_0 -\cS_0$. 
\end{lemma}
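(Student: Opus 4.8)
The plan is to read off all four identities directly from the two projective/injective resolutions that define the functors $KK$ and $CK$, combined with the left-exactness properties of the Kan extensions. Recall from Section~\ref{ss:Kan extensions and Stability} that $KK(M) = \ker(K_L(M) \to K_{LR}(M))$ and $CK(M) = \cok(K_{LR}(M) \to K_R(M))$, and that both modules are supported only in $\cR_0 - \cS_0$. The key structural input is Lemma~\ref{lemma:stable}(4)--(5): the adjunction $K_{LR}M \hookrightarrow K_R M$ is the injective envelope of $K_{LR}M$ relative to the kernel of $\res$, and dually $K_L M \twoheadrightarrow K_{LR}M$ is the corresponding projective cover. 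These facts turn $CK$ and $KK$ into the first cosyzygy and syzygy of $K_{LR}M$ inside $\Mod\cR$ with respect to modules killed by $\res$.

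First I would establish the two $\Ext^1$-identities. For the first, start from the short exact sequence $0 \to KK(M) \to K_L(M) \to K_{LR}(M) \to 0$. Apply $\Hom(-, S)$ for $S$ supported in $\cR_0 - \cS_0$ and take the long exact sequence. Since $K_L(M)$ is a left Kan extension, $\Ext^1(K_L M, S)$ and $\Hom(K_L M, S)$ both vanish for such $S$ (because $K_L M$ is costable and $S$ lies in the kernel of $\res$, using the adjunction $\Hom_\cR(K_L M, S) = \Hom_\cS(M, \res S) = 0$ and an analogous vanishing of the higher term). This collapses the long exact sequence to the isomorphism $\Ext^1(K_{LR}M, S) \cong \Hom(KK(M), S)$. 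The second identity, $\Ext^1(S, K_{LR}M) \cong \Hom(S, CK(M))$, follows by the dual argument applied to $0 \to K_{LR}(M) \to K_R(M) \to CK(M) \to 0$, using that $K_R M$ is stable and injective relative to the kernel of $\res$, so $\Hom(S, K_R M)$ and $\Ext^1(S, K_R M)$ vanish for $S$ killed by $\res$.

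Next I would treat the two vanishing statements, $\Ext^1(KK(M), S) = 0$ and $\Ext^1(S, CK(M)) = 0$ for finite-dimensional nilpotent $S$ supported in $\cR_0 - \cS_0$. The natural route is to identify $KK(M)$ and $CK(M)$ explicitly via Lemma~\ref{lemma: KK} and the graded-to-ungraded comparison: $p_* KK^{gr}(M) \cong KK(p_* M)$, with the analogous statement for $CK$ obtained dually. Since $p_*$ is exact and the graded functors $KK^{gr}$, $CK^{gr}$ land in (projective, respectively injective) $\cd_Q$-modules pushed into $\Mod\cR$, the modules $KK(M)$ and $CK(M)$ are built from the projective/injective modules $P(x)$, $I(x)$ of Section~2.8. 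The cleanest argument is that $KK(M)$ is a \emph{projective} object among modules supported in $\cR_0 - \cS_0$ (being a submodule of the projective cover $K_L M$ and itself arising as an image of left Kan extensions), so $\Ext^1$ out of it into any module vanishes; dually $CK(M)$ is injective relative to that subcategory, killing $\Ext^1$ into it. Concretely, one shows $KK(M)$ is a summand-closed combination of the $\tau(x)^\wedge$ appearing in the resolutions of Lemma~\ref{lemma: resolutions-of-simples-in -R}a).

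The main obstacle I anticipate is the last step: proving the projectivity (respectively injectivity) of $KK(M)$ and $CK(M)$ relative to the subcategory of modules supported off $\cS_0$, as opposed to mere projectivity in $\Mod\cR$. One must be careful that ``supported in $\cR_0-\cS_0$'' modules do not form a nice enough subcategory on their own; the safe approach is to use the explicit form of $KK(M)$ and $CK(M)$ coming from the comparison $p_* KK^{gr} \cong KK \circ p_*$ together with the resolutions in Lemma~\ref{lemma: resolutions-of-simples-in -R} and Lemma~\ref{lemma: homology of S}, so that the higher $\Ext$ can be computed termwise against the known injective coresolution of a nilpotent simple $S$. Since every finite-dimensional nilpotent $S$ is filtered by the simples $S_x$ with $x$ non-frozen, it suffices by dévissage to verify the two vanishings for $S = S_x$, and there the injective resolution $0 \to S_x \to x^\vee \to \bigoplus_{x\to y} y^\vee \to \tau^{-1}(x)^\vee \to 0$ of Lemma~\ref{lemma: resolutions-of-simples-in -R}b) makes $\Ext^1(KK(M), S_x)$ directly computable and visibly zero.
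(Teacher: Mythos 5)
Your treatment of the two identities is correct and is exactly the paper's argument: apply $\Hom(-,S)$ (respectively $\Hom(S,-)$) to the defining short exact sequence $0 \to KK(M) \to K_L M \to K_{LR}M \to 0$ (respectively $0 \to K_{LR}M \to K_R M \to CK(M) \to 0$) and kill the terms involving $K_L M$ (respectively $K_R M$) using the adjunction together with the $\Ext^1$-vanishing recorded in Lemma~\ref{lemma:stable}.

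The gap is in the ``furthermore'' part. Your route --- identify $KK(M)$ and $CK(M)$ explicitly and argue they are projective, respectively injective, relative to modules supported on $\cR_0-\cS_0$ --- does not go through. First, projectivity of $KK(M)$ as a $\cp$--module is precisely Lemma~\ref{lemma:projectivity-of-KK} of the paper, which is \emph{deduced from} the vanishing you are trying to prove, and only under the Dynkin hypothesis; your parenthetical justification (``submodule of the projective cover $K_L M$'') establishes nothing, since $K_L M$ is not projective in $\Mod \cR$ and submodules of projectives need not be projective in any case. Second, the comparison $p_* KK^{gr} \cong KK\, p_*$ of Lemma~\ref{lemma: KK} only computes $KK$ on modules in the essential image of the push-down functor $p_*$, and for an orbit category this functor is not essentially surjective in general, so it cannot identify $KK(M)$ for an arbitrary $M$; consequently your final claim that $\Ext^1(KK(M),S_x)$ is ``visibly zero'' against the injective resolution of $S_x$ presupposes exactly the structure of $KK(M)$ that is not available. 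The idea you are missing is that no identification of $KK(M)$ is needed: the \emph{same} long exact sequence used for the first identity continues as
\[
0 \to \Ext^1(KK(M),S) \to \Ext^2(K_{LR}M,S),
\]
and by the resolutions of the nilpotent simples in Lemma~\ref{lemma: resolutions-of-simples-in -R} (plus d\'evissage from simples to finite-dimensional nilpotent $S$) the group $\Ext^2(K_{LR}M,S)$ is identified, up to duality and a $\tau$-twist, with a space of homomorphisms from a nilpotent module supported on $\cR_0-\cS_0$ into $K_{LR}M$; this vanishes because $K_{LR}M$ is stable. That argument is uniform in $M$, makes no Dynkin assumption, and avoids any appeal to the graded comparison --- which matters, since the lemma is invoked later in both the Dynkin and non-Dynkin cases.
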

\begin{proof}
We give the proof for the functor $KK$, the proof for $CK$ being dual. 
By applying $\Hom( -, S)$ to the short exact sequence  $$ 0 \to KK( M) \to K_L M   \to K_{LR} (M) \to 0$$ we obtain the exact sequence
\begin{eqnarray*}
\Hom( K_L M, S)  & \to &  \Hom(KK (M), S) \to \Ext^1( K_{LR} M, S)  \\ 
  &\to& \Ext^1(K_L M , S) \to \Ext^1(KK (M), S) \to \Ext^2(K_{LR} M, S). 
\end{eqnarray*}
By Lemma~\ref{lemma:stable} we have that $\Hom( K_L M, S)$ and $\Ext^1(K_L M , S)$ vanish, proving the first identity.  

We deduce from Lemma~\ref{lemma: resolutions-of-simples-in -R} that 
\[
 \Ext^2(K_{LR} M, S) \cong \Hom(S, K_{LR} M).
 \] The second term vanishes as $K_{LR} M$ is stable, which proves the second statement.
\end{proof}
\begin{lemma} \label{lemma:projectivity-of-KK}
Let $Q$ be of Dynkin type. 
For any $M \in \mod  \cS$, the $\cp$--module $KK(M)$ is finitely generated and projective. Dually, $CK(M)$ is a finitely cogenerated injective $\cp$--module. 
\end{lemma}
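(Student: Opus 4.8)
The plan is to reduce the statement to an $\Ext^1$-vanishing criterion over the finite-dimensional algebra $\cp$ and then feed in the computation of the preceding Lemma. Since $Q$ is of Dynkin type, Lemma~\ref{lemma: tilde P} guarantees that $\cp$ is a finite-dimensional self-injective algebra with $\proj \cp = \inj \cp$; writing $\cp \cong k Q_\cp / I$ for a finite quiver and an admissible ideal, its simple modules are exactly the $S_x$ with $x \in \cR_0 - \cS_0$, and every finite-dimensional $\cp$-module is nilpotent. Over such an algebra a finite-dimensional module $P$ is projective if and only if $\Ext^1_\cp(P, S) = 0$ for every simple $S$, and dually a finite-dimensional module is injective if and only if $\Ext^1_\cp(S, -)$ vanishes on it for every simple $S$. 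So it suffices to check that $KK(M)$ and $CK(M)$ are finite-dimensional and to establish the matching $\Ext^1_\cp$-vanishing.

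For finite-dimensionality I would argue as follows. As $Q$ is Dynkin, $\tilde Q$ is a finite quiver and $\cR$ is a finite-dimensional algebra. A finite projective presentation of $M$ over $\cS$ is carried by the right-exact functor $K_L$ to a presentation $K_L P_1 \to K_L P_0 \to K_L M \to 0$ in which the $K_L P_i$ are finitely generated projective, hence finite-dimensional, $\cR$-modules; thus $K_L M$, and with it its submodule $KK(M)$, is finite-dimensional, and the argument for $K_R M$ and its quotient $CK(M)$ is dual. Both modules are supported in $\cR_0 - \cS_0$ and so are genuine finite-dimensional $\cp$-modules; in particular $CK(M)$, being finite-dimensional, is automatically finitely cogenerated once it is shown to be injective.

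For the vanishing, the preceding Lemma supplies $\Ext^1_\cR(KK(M), S) = 0$ and $\Ext^1_\cR(S, CK(M)) = 0$ for every finite-dimensional nilpotent $\cR$-module $S$ supported in $\cR_0 - \cS_0$, that is, for every simple $\cp$-module $S$. It then remains to transport this from $\cR$ to $\cp$. Writing $\cp = \cR/\langle \cS \rangle$ and observing that $\langle \cS \rangle$ is the idempotent ideal $\cR e \cR$ generated by the identities of the objects of $\cS$, I would invoke the standard fact that for modules $X, Y$ annihilated by $\cR e \cR$ the natural comparison map
\[
\Ext^1_\cp(X, Y) \longrightarrow \Ext^1_\cR(X, Y)
\]
is injective, the point being that an $\cR$-linear section of a short exact sequence of $\cp$-modules is automatically $\cp$-linear, so an $\cR$-split $\cp$-extension is already $\cp$-split. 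Applying this with $(X,Y) = (KK(M), S)$ and with $(X,Y) = (S, CK(M))$ yields $\Ext^1_\cp(KK(M), S) = 0 = \Ext^1_\cp(S, CK(M))$ for all simple $S$, whence $KK(M)$ is projective and $CK(M)$ is injective by the two criteria above.

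I expect the only genuinely delicate point to be this last passage from $\cR$ to $\cp$. The Ext groups in the preceding Lemma are computed in $\Mod \cR$, using that the nilpotent $\cR$-modules have global dimension two (Corollary~\ref{corollary: global dimension}), whereas projectivity and injectivity are assertions in $\Mod \cp$, where resolutions of non-projective modules are infinite because $\cp$ is self-injective; moreover the inflation $\Mod \cp \hookrightarrow \Mod \cR$ does \emph{not} send projectives to projectives. The injectivity of the comparison map for $\Ext^1$ is precisely what bridges this gap, and everything else—finite-dimensionality, the nilpotence of all simple $\cp$-modules, and the two $\Ext^1$-criteria—is routine once $Q$ is assumed Dynkin so that $\cp$ is finite-dimensional.
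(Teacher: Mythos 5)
Your proof is correct and follows essentially the same route as the paper's: establish that $KK(M)$ and $CK(M)$ are finite-dimensional $\cp$--modules using finite generation and the finiteness of the relevant categories in the Dynkin case, then combine the $\Ext^1$-vanishing of the preceding Lemma against the simple $\cp$--modules $S_x$ with the standard $\Ext^1$-criterion for projectivity/injectivity over a finite-dimensional algebra. The only real difference is explicitness: the paper silently applies that criterion over $\cp$ to a vanishing statement proved over $\cR$, whereas you spell out the needed bridge, namely the injectivity of the comparison map $\Ext^1_{\cp}(X,Y)\to\Ext^1_{\cR}(X,Y)$ for modules annihilated by $\langle \cS \rangle$ --- an easy but genuinely necessary step that the paper's own proof leaves implicit.
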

\begin{proof} 
As $M$ is finitely-generated the same is true for $K_L(M)$ and its submodule $KK(M)$. As $Q$ is of Dynkin type, the algebra $\cp$ is finite-dimensional. Hence
$KK(M)$ is finite-dimensional and finitely presented as $\cp$--module. Furthermore all simple $\cp$--modules are nilpotent and hence isomorphic to $S_x$ for some $x \in \cR_0 -\cS_0$. As $\Ext^1(KK(M), S_x)$ vanishes by the 
previous Lemma for all simple $\cp$--modules $S_x$, the module $KK(M)$ is a finitely generated projective $\cp$--module. The proof that $CK(M)$ is finitely cogenerated injective is dual. 
\end{proof}

If $Q$ is of Dynkin type, then $\proj \cp$ carries a triangulated structure, which lifts from the triangulated structure of $\cd_Q$. Hence the shift functor $\Sigma$ seen as an automorphism of $\proj \cp$ makes $\proj \cp$ a triangulated category. This is proven for instance in \cite{Keller05}  Section 7.3, where $\proj \cp$ is seen as an orbit category of $\cd_Q$ by the Auslander-Reiten translation $\tau$.

\begin{proposition}\label{prop: CK and KK}Let $Q$ be of Dynkin type, then $CK$ is equivalent to $ \Sigma KK$.
  \end{proposition}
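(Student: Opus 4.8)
The plan is to deduce the isomorphism from the two representability statements of the preceding lemma together with Yoneda's lemma. Recall that for every finite-dimensional nilpotent $\cR$--module $N$ (equivalently, every finite-dimensional $\cp$--module) we have
\[
\Hom_{\cp}(KK(M),N)\cong\Ext^1_{\cR}(K_{LR}M,N)\quad\text{and}\quad\Hom_{\cp}(N,CK(M))\cong\Ext^1_{\cR}(N,K_{LR}M).
\]
By Lemma~\ref{lemma:projectivity-of-KK} and Lemma~\ref{lemma: tilde P} both $CK(M)$ and $\Sigma KK(M)$ lie in $\proj\cp=\inj\cp$, so each is determined by the contravariant functor $\Hom_{\cp}(-,?)$ it represents on nilpotent modules (since these objects are injective, it even suffices to test against the simple $\cp$--modules, which detect the socle). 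Hence it is enough to produce an isomorphism $\Ext^1_{\cR}(N,K_{LR}M)\cong\Hom_{\cp}(N,\Sigma KK(M))$, natural in $N$: the left-hand side is $\Hom_{\cp}(N,CK(M))$, and the comparison then forces $CK(M)\cong\Sigma KK(M)$.

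The engine producing such an isomorphism is a functorial Serre--duality on the category of finite-dimensional nilpotent $\cR$--modules, which has global dimension two by Corollary~\ref{corollary: global dimension}. First I would build, from the dual pair of projective and injective resolutions of the simple modules in Lemma~\ref{lemma: resolutions-of-simples-in -R}, a natural isomorphism $\Ext^1_{\cR}(N,K_{LR}M)\cong D\Ext^1_{\cR}(K_{LR}M,\theta N)$ for a suitable autoequivalence $\theta$ of the nilpotent modules. Applying the first representability isomorphism turns $\Ext^1_{\cR}(K_{LR}M,\theta N)$ into $\Hom_{\cp}(KK(M),\theta N)$, so the right-hand side becomes $D\Hom_{\cp}(KK(M),\theta N)$. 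The self-injective identification $x^{\vee}_{\cp}\cong(S^{-1}x)^{\wedge}_{\cp}$ of Lemma~\ref{lemma: tilde P} — i.e. the Nakayama functor $\nu$ of $\cp$, which by that lemma acts as $S^{-1}$ for the Serre functor $S$ of $\cd_Q$ — gives $D\Hom_{\cp}(KK(M),\theta N)\cong\Hom_{\cp}(\theta N,\nu\,KK(M))$, and moving the autoequivalence $\theta$ across $\Hom$ rewrites this as $\Hom_{\cp}(N,\theta^{-1}\nu\,KK(M))$. It therefore remains to identify $\theta^{-1}\nu$ with the shift $\Sigma$ on $\proj\cp$.

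The delicate point — and the step I expect to be the main obstacle — is precisely that $\theta^{-1}\nu$ collapses to $\Sigma$ and to nothing more. The subtlety is that $K_{LR}M$ is bistable but \emph{not} nilpotent, so in the resolution $0\to\tau(x)^{\wedge}\to\bigoplus_{y\to x}y^{\wedge}\to x^{\wedge}\to S_x\to 0$ of Lemma~\ref{lemma: resolutions-of-simples-in -R} the arrows $y\to x$ in $\tilde Q$ include the frozen arrow $\sigma(x)\to x$, so that $\Ext^1_{\cR}(-,K_{LR}M)$ genuinely sees the frozen values of $K_{LR}M$. A naive computation that drops these frozen contributions yields $\theta\cong\tau$ and hence, via Happel's relation $\tau\cong S\Sigma^{-1}$ on $\cd_Q$, the wrong answer $CK(M)\cong\theta^{-1}\nu\,KK(M)\cong\Sigma S^{-2}KK(M)$, which equals $\Sigma KK(M)$ only when $S^{2}$ is trivial on $\proj\cp$; so the frozen contributions are exactly what must repair this discrepancy. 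The bookkeeping takes place on the orbit category $\proj\cp=\cd_Q/F$, where $F$ acts as the identity, and I would use the relations $F^n=\tau^k$, $F^m=\Sigma^h$ and $\tau^s\cong\Sigma^2$ from the proof of Lemma~\ref{lemma: tilde P} to absorb the surplus powers of $\tau$ and $S$ and leave precisely $\Sigma$. An alternative, potentially cleaner, route is to pull everything back along the Galois covering $p\colon\cR^{gr}_C\to\cR$: since $p^{*}$ is faithful and exact, a canonical comparison map $CK(M)\to\Sigma KK(M)$ is an isomorphism as soon as its pullback is, reducing the claim to the graded identity $CK^{gr}\cong\Sigma KK^{gr}$ in $\cd_Q$ — the cost being to verify that $p^{*}$ intertwines $K_L$, $K_{LR}$, $K_R$ (the last under the Dynkin and finiteness hypotheses) and the shift, dually to Lemma~\ref{lemma: KK}.
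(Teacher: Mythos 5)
Your plan is, up to its last step, the paper's own proof in functorial clothing: the paper computes the multiplicity of $x^{\wedge}$ in $KK(M)$ as $\dim\Hom(KK(M),S_x)=\dim\Ext^1(K_{LR}M,S_x)$, the multiplicity of $x^{\vee}$ in $CK(M)$ as $\dim\Hom(S_x,CK(M))=\dim\Ext^1(S_x,K_{LR}M)$, observes that $\dim\Ext^1(S_x,K_{LR}M)=\dim\Ext^1(K_{LR}M,S_{\tau x})$, and concludes via the identification $x^{\vee}=(\tau\Sigma x)^{\wedge}$ in $\proj\cp$. In your notation this says exactly $\theta=\tau$ and $\nu=\tau\Sigma$, whence $\theta^{-1}\nu=\Sigma$ on the nose. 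The genuine gap is that you leave precisely this composite unidentified, declare it ``the main obstacle'', and then analyse it incorrectly on both counts, so your argument never closes.

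First, $\theta=\tau$ is not a naive value that frozen contributions must repair; it is the correct twist, frozen vertices included. Computing $\Ext^{\bullet}_{\cR}(S_x,N)$ from the projective resolution of $S_x$ in Lemma~\ref{lemma: resolutions-of-simples-in -R} gives the complex $0\to N(x)\to\bigoplus_{y\to x}N(y)\to N(\tau x)\to 0$, while computing $\Ext^{\bullet}_{\cR}(N,S_{\tau x})$ from the injective resolution of $S_{\tau x}$ gives $0\to DN(\tau x)\to\bigoplus_{\tau x\to y}DN(y)\to DN(x)\to 0$; since the arrows out of $\tau x$ in $\tilde{Q}$ (including $\tau(x)\to\sigma(x)$) correspond exactly to the arrows into $x$ (including $\sigma(x)\to x$), the second complex is the $k$-linear dual of the first, frozen terms and all. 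The frozen values of $K_{LR}M$ thus enter both sides symmetrically and drop out of the comparison. Second, your ``wrong answer'' $\Sigma S^{-2}$ stems from taking the Nakayama functor to be $S^{-1}$, i.e.\ from the formula $x^{\vee}_{\cp}\cong(S^{-1}x)^{\wedge}_{\cp}$ printed in Lemma~\ref{lemma: tilde P}. That formula rests on a variance slip inside that proof (it computes with $D\cp(?,x)$, though $x^{\vee}$ is defined as $D\cp(x,?)$); with the paper's conventions one has $D\cp(x,u)\cong\bigoplus_i D\cd_Q(x,F^iu)\cong\bigoplus_i\cd_Q(F^iu,Sx)\cong\cp(u,Sx)$, i.e.\ $x^{\vee}_{\cp}\cong(\tau\Sigma x)^{\wedge}_{\cp}$, which is what the paper's proof of this very proposition uses. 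With $\nu=\tau\Sigma$ your own chain of isomorphisms terminates immediately in $\Sigma KK(M)$, no orbit-category bookkeeping required. By contrast, your proposed rescue of $\Sigma S^{-2}$ via the relations $F^n=\tau^k$, $F^m=\Sigma^h$, $\tau^s\cong\Sigma^2$ cannot succeed: on $\proj\cp$ one has $S^2=\tau^{s+2}$, which for $F=\tau^n$ (cyclic quiver varieties) is nontrivial whenever $n\nmid s+2$, so $\Sigma S^{-2}\neq\Sigma$ in general and along that route the Proposition would simply be false. Your fallback through the Galois covering is also not a proof as it stands: $p^{*}$ of a finite-dimensional $\cS$--module is $F$-periodic, hence unbounded, and lies outside the finiteness hypotheses under which the Kan extensions are controlled (Lemma~\ref{lemma:kan-extensions-graded-ungraded} concerns $p_{*}$ and needs boundedness for the $K_R$-compatibility), besides presupposing the graded identity it is meant to reduce to.
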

\begin{proof}
Note that the Serre functor on $\cd_Q$ is given by $\tau \Sigma$ and induces the Serre functor of $\proj \cp$ and an automorphism on $\cp$. We will also denote the
automorphism on $\cp$  by $\tau \Sigma$. 
The multiplicity of $x^{\wedge} $ appearing as a direct summand of $KK(M)$  is given by the dimension of $\Hom( KK(M), S_x)\cong \Ext^1(K_{LR} M, S_x)$. Dually the multiplicity of $x^{\vee}=(\tau \Sigma x)^{\wedge} $ as a direct summand of $CK(M)$ is given by the dimension of $$\Hom(S_x,  CK (M))\simeq \Ext^1(S_x, K_{LR} M).$$ Now as the dimensions of $\Ext^1(S_x, K_{LR} M)$ and $\Ext^1( K_{LR} M, S_{\tau x})$ are equal, both numbers coincide, which finishes the proof. 
\end{proof}

The next Lemma shows that, if $Q$ is of Dynkin type, the functors $KK$ and $CK$ are essentially surjective onto $\proj \cp$. 

\begin{lemma}\label{lemma: KK-of-simple}
We have $KK(S_{\sigma(x)} ) \cong x^{\wedge}_{\cp}$  and  
$CK(S_{\sigma(x)} ) \cong x^{\vee}_{\cp}$ for all $x \in C$.
 \end{lemma}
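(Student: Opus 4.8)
The plan is to deduce the statement from its graded counterpart by applying the pushforward functor $p_*$, exactly as is done throughout this section (e.g.\ in Lemmas~\ref{lemma: resolutions-of-simples-in -R} and~\ref{lemma: homology of S}). The first observation is that the graded simple $\cS^{gr}$--module $S_{\sigma(x)}$ pushes forward to the simple $\cS$--module of the same name: indeed $p_* S_{\sigma(x)}(y) = \bigoplus_{n \in \Z} S_{\sigma(x)}(F^n y)$ equals $k$ exactly on the $F$--orbit of $\sigma(x)$ and zero elsewhere, which under the identification of $\cS$ with its skeleton is precisely $S_{\sigma(x)}$. Hence by Lemma~\ref{lemma: KK} we obtain $KK(S_{\sigma(x)}) \cong KK(p_* S_{\sigma(x)}) \cong p_* KK^{gr}(S_{\sigma(x)})$, and by the dual of that lemma (which follows from the isomorphism $p_* K^{gr}_R M \cong K_R p_* M$ of Lemma~\ref{lemma:kan-extensions-graded-ungraded} for finite-dimensional $M$ together with the exactness of $p_*$) we likewise get $CK(S_{\sigma(x)}) \cong p_* CK^{gr}(S_{\sigma(x)})$.

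Next I would invoke the graded base case $KK^{gr}(S_{\sigma(x)}) \cong x^{\wedge}_{\cd_Q}$ and, dually, $CK^{gr}(S_{\sigma(x)}) \cong x^{\vee}_{\cd_Q}$, where these are regarded as $\cd_Q$--modules via $\cR^{gr}/\langle \cS^{gr} \rangle \cong \ind \cd_Q$. This is established in \cite{KellerScherotzke13a}; alternatively it can be computed directly by applying $K^{gr}_L$ and $K^{gr}_R$ to the beginning $P^{gr}(\tau x) \to \sigma(x)^{\wedge} \to S_{\sigma(x)} \to 0$ of the graded projective resolution (the graded analogue of Lemma~\ref{lemma: homology of S}) and identifying $KK^{gr}(S_{\sigma(x)})$ as the kernel of the natural map $K^{gr}_L(S_{\sigma(x)}) \to K^{gr}_R(S_{\sigma(x)})$, with the $CK^{gr}$ statement being the dual computation over $\cd_Q^{op}$.

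It then remains to compute the pushforwards of the (co)representable $\cd_Q$--modules. Using the defining formula $p_* M(y) = \bigoplus_n M(F^n y)$ and the orbit-category identity $\cp(y,x) \cong \bigoplus_{n \in \Z} \cd_Q(y, F^n x) \cong \bigoplus_{n \in \Z}\cd_Q(F^n y, x)$ (the isomorphism already exploited in the proof of Lemma~\ref{lemma: homology of S}, using that $F$ is an autoequivalence), one finds $p_*(x^{\wedge}_{\cd_Q})(y) = \bigoplus_n \cd_Q(F^n y, x) \cong \cp(y,x) = x^{\wedge}_{\cp}(y)$ naturally in $y$, so that $p_*(x^{\wedge}_{\cd_Q}) \cong x^{\wedge}_{\cp}$; dually $p_*(x^{\vee}_{\cd_Q})(y) = D\bigoplus_n \cd_Q(x, F^n y) \cong D\cp(x,y) = x^{\vee}_{\cp}(y)$, giving $p_*(x^{\vee}_{\cd_Q}) \cong x^{\vee}_{\cp}$. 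Combining the three steps yields $KK(S_{\sigma(x)}) \cong x^{\wedge}_{\cp}$ and $CK(S_{\sigma(x)}) \cong x^{\vee}_{\cp}$, in agreement with Proposition~\ref{prop: CK and KK}.

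The step I expect to be the main obstacle is the graded base case of the second paragraph: the reduction via $p_*$ is purely formal, but verifying $KK^{gr}(S_{\sigma(x)}) \cong x^{\wedge}_{\cd_Q}$ requires either a careful citation of \cite{KellerScherotzke13a} or an explicit identification of the kernel of $K^{gr}_L(S_{\sigma(x)}) \to K^{gr}_R(S_{\sigma(x)})$, which is where the genuine content of the orbit-category Kan-extension bookkeeping lives. A secondary point to check is the $CK$--analogue of Lemma~\ref{lemma: KK}, since that lemma as stated treats only $KK$ and relies on the Dynkin hypothesis through the isomorphism $p_* K^{gr}_R \cong K_R p_*$.
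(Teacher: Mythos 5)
Your argument for the first isomorphism is correct and is essentially the paper's own proof: push forward the graded computation $KK^{gr}(S_{\sigma(x)}) \cong x^{\wedge}$ of \cite{KellerScherotzke13a} through Lemma~\ref{lemma: KK}, and identify $p_*(x^{\wedge})(y) = \bigoplus_n \cd_Q(F^n y, x) \cong \cp(y,x)$. Note that this last identification involves no duality and hence no finiteness issue, and that Lemma~\ref{lemma: KK} itself needs no Dynkin hypothesis (its proof uses only the isomorphism $p_*K^{gr}_L \cong K_L p_*$, the canonical inclusion $p_*K^{gr}_R \hookrightarrow K_R p_*$, and exactness of $p_*$), so this half of your proof is valid for arbitrary $Q$, exactly as in the paper.

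The gap is in the second isomorphism, and the point you label ``secondary'' is in fact fatal. The lemma carries no Dynkin hypothesis, and the paper invokes $CK(S_{\sigma(x)}) \cong x^{\vee}_{\cp}$ precisely in the non-Dynkin case (see the remark following Theorem~\ref{thm: stratification}). Your route needs $CK \circ p_* \cong p_* \circ CK^{gr}$, hence $p_* K^{gr}_R \cong K_R p_*$, which Lemma~\ref{lemma:kan-extensions-graded-ungraded} provides only for $Q$ of Dynkin type; outside the Dynkin case the commutation genuinely fails for the module at hand. Indeed, $p_*\bigl(CK^{gr}(S_{\sigma(x)})\bigr)(y) = \bigoplus_{n} D\,k(\Z Q)(x, F^n y)$ is a direct sum of duals, whereas $x^{\vee}_{\cp}(y) = D\cp(x,y) = D\bigl(\bigoplus_n k(\Z Q)(x,F^n y)\bigr)$ is the dual of a direct sum, i.e.\ a product; these disagree whenever $\cp(x,y)$ is infinite-dimensional, which happens for non-Dynkin preprojective algebras. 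The same unjustified exchange of $D$ with $\bigoplus_n$ occurs silently in your final computation of $p_*(x^{\vee}_{\cd_Q})$, and it is legitimate only when $\cp$ is $\Hom$-finite, i.e.\ again only in the Dynkin case (Lemma~\ref{lemma: tilde P}). This is exactly why the paper proves the $CK$ statement by a completely different, purely ungraded argument: it first shows $K_R(\sigma(x)^{\vee}) \cong \sigma(x)^{\vee}$ using Lemma~\ref{lemma:stable} (stability plus vanishing of $\Ext^1(-,\sigma(x)^{\vee})$), then compares $K_R$ applied to the injective resolution of $S_{\sigma(x)}$ from Lemma~\ref{lemma: resolutions-of-simples-in -R} with the exact row $x^{\vee}_{\cp} \rightarrow x^{\vee} \rightarrow I(x)$ supplied by Theorem~3.7 of \cite{KellerScherotzke13a}, and concludes by a diagram chase that the kernel of the surjection $K_R S_{\sigma(x)} \twoheadrightarrow x^{\vee}_{\cp}$ is $S_{\sigma(x)} \cong K_{LR}S_{\sigma(x)}$, whence $CK(S_{\sigma(x)}) \cong x^{\vee}_{\cp}$. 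If you restrict your claim to Dynkin $Q$, your argument does go through, but it then proves strictly less than the stated lemma.
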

 \begin{proof} 
To prove the first identity we use the result of Lemma~\ref{lemma: KK}. 
We have shown in \cite{KellerScherotzke13a}, that $KK^{gr}(S_{\sigma(x)}) $ is a projective module of $\cp$ represented by the image of the Happel functor $H(x)$.
It follows that $$KK(S_{\sigma(x)})=p_*(\Hom_{\cd}( -, H(x)))= x^{\wedge}_{\cp}.$$  

To prove the second identity, we observe that by Lemma~\ref{lemma:stable} the module $K_R(\sigma(x)^{\vee})$ is isomorphic to $ \sigma(x)^{\vee}$ seen as $\cR$--module. This follows from the fact that $\sigma(x)^{\vee}$ is stable and $\Ext^1(-,  \sigma(x)^{\vee})$ vanishes. 
Hence $K_R I(x)$ is isomorphic to $I(x)$ seen as $\cR$--module. 
We consider now the commutative diagram with exact rows and exact columns
\[\xymatrix{ 
 \ker g \ar@{^{(}->}[r] \ar@{^{(}->}[d] & S_{\sigma(x)} \ar@{^{(}->}[d] \ar[r]& 0 \ar@{^{(}->}[d] \\
 K_R S_{\sigma(x)} \ar@{^{(}->}[r] \ar@{->>}[d]^{g} &\sigma(x)^\vee \ar[r] \ar@{->>}[d] & I(x) \ar@{=}[d] \\
 x^\vee_{\cp} \ar@{^{(}->}[r] & x^{\vee} \ar[r] &  I(x)}
 \]
The exactness of the middle row follows from applying $K_R$ to the beginning of the injective resolution of $S_{\sigma(x)} $ in Lemma~\ref{lemma: resolutions-of-simples-in -R}. To obtain the exactness of the last row, we use Theorem 3.7 of \cite{KellerScherotzke13a}. 
Hence it follows that $\ker g$ equals $S_{\sigma(x)} \cong K_{LR} (S_{\sigma(x)})$ and therefore $CK(S_{\sigma(x)} ) \cong x^\vee_{\cp}$. 
\end{proof}

Recall that a $\delta$--functor is a functor from an exact category to a triangulated category, which sends short exact sequences 
to distinguished triangles up to equivalence, cf. e.~g. \cite{Keller91}.

\begin{proposition}
Let $Q$ be of Dynkin type. The functors $KK$ and $CK$ are $\delta$-functors 
$$ \mod \cS \to \proj  \cp .$$ 
\end{proposition}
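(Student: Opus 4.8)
The plan is to show that each of $KK$ and $CK$ sends a short exact sequence of $\cS$--modules to a distinguished triangle in $\proj \cp$. Since $CK = \Sigma KK$ by Proposition~\ref{prop: CK and KK}, and $\Sigma$ is a triangle equivalence of $\proj \cp$, it suffices to treat $KK$; the statement for $CK$ follows by shifting. So let
\[
0 \to M' \to M \to M'' \to 0
\]
be a short exact sequence in $\mod \cS$ and apply the Kan extensions. First I would recall that $K_L$ is a left adjoint, hence right exact, and that $\res$ is exact with $\res K_L = \id$ on $\mod \cS$; I would use the snake lemma to control the kernels of the maps $K_L \to K_{LR}$ defining $KK$.

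The key step is to assemble the three defining sequences $0 \to KK(N) \to K_L(N) \to K_{LR}(N) \to 0$ for $N \in \{M', M, M''\}$ into a commutative $3\times 3$ diagram and extract the long exact homology sequence relating the $KK$-terms. Concretely, I would argue that $K_{LR}$ applied to the short exact sequence yields an exact sequence $0 \to K_{LR}M' \to K_{LR}M \to K_{LR}M'' \to 0$ of bistable $\cR$--modules — this uses that $K_{LR}$ is the intermediate extension and that on the bistable subcategory it is exact, since by Lemma~\ref{lemma:stable}(3) it is an equivalence onto bistable modules and $\res$ is exact. Combined with the exactness $K_L M' \to K_L M \to K_L M'' \to 0$ (right exactness of $K_L$, with left-exactness recovered because $\res$ detects injectivity and $\res K_L$ is exact), the snake lemma gives
\[
0 \to KK(M') \to KK(M) \to KK(M'') \to 0,
\]
i.e.\ a short exact sequence of \emph{projective} $\cp$--modules, by Lemma~\ref{lemma:projectivity-of-KK}.

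The final step is to promote this short exact sequence of projectives to a distinguished triangle in the triangulated category $\proj \cp$. Here I would invoke that $\proj \cp = \inj \cp$ is a Frobenius category with the projective-injective objects being all of $\proj \cp$ itself, so that the associated stable category is the triangulated structure used in Lemma~\ref{lemma: tilde P}; in a Frobenius category every conflation (short exact sequence of objects of the exact structure) is by definition a distinguished triangle, up to the identification of the boundary map. Since $KK(M')\to KK(M)\to KK(M'')$ is such a conflation, it is a triangle up to the required equivalence, which is exactly the $\delta$-functor condition.

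The main obstacle I expect is the homological bookkeeping in the middle step: verifying that the sequence of $K_L$-terms is genuinely left exact and that the induced sequence of $K_{LR}$-terms stays short exact. Right exactness of $K_L$ is automatic, but injectivity of $KK(M')\to KK(M)$ and exactness of the $K_{LR}$-row are not formal and require using that the relevant $\Ext^1$ obstructions against modules in the kernel of $\res$ vanish for the modules at hand — this is where Lemma~\ref{lemma:stable}(4) and the projectivity statement of Lemma~\ref{lemma:projectivity-of-KK} do the real work. Once the $3\times 3$ diagram is shown to have short exact rows and columns, the snake lemma and the Frobenius structure finish the argument cleanly.
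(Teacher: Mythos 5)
Your reduction of $CK$ to $KK$ via $CK=\Sigma KK$ is fine, but the core of your argument fails, and for a structural reason: a $\delta$--functor must produce, for each short exact sequence, a connecting morphism $KK(M'')\to\Sigma KK(M')$ which is in general \emph{nonzero}, and your proof produces none. You claim that $0\to KK(M')\to KK(M)\to KK(M'')\to 0$ is short exact; since these are projective $\cp$--modules, any such sequence would split, so your argument would show that $KK$ sends every short exact sequence to a direct sum, i.e.\ to a split triangle. This is false: by Lemma~\ref{lemma: KK-of-simple} and the isomorphism $\Ext^1_{\cS}(S_{\sigma(x)},S_{\sigma(y)})\cong\cp(x,\Sigma y)$ (which the paper uses in the proof of the degeneration-order theorem, and which is induced precisely by the connecting morphisms of this $\delta$--functor), a non-split extension of $S_{\sigma(x)}$ by $S_{\sigma(y)}$ must map to a triangle with nonzero connecting map $x^{\wedge}\to\Sigma y^{\wedge}$. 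The precise step that breaks is your claim that $K_{LR}$ takes short exact sequences to short exact sequences: the intermediate extension preserves monomorphisms and epimorphisms, but it is \emph{not} exact in the middle; the kernel of $K_{LR}M\to K_{LR}M''$ can be strictly larger than $K_{LR}M'$, which is exactly why the paper later obtains only the inequality $\gdim K_{LR}Y+\gdim K_{LR}L\le\gdim K_{LR}N$. Your left-exactness claim for the $K_L$ row is also unjustified: $\res$ does not detect injectivity, since kernels may be supported entirely on non-frozen vertices.

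Your final step rests on a misidentification of the triangulated structure as well. The triangles in $\proj\cp$ come from Keller's orbit-category theorem, $\proj\cp\simeq\cd_Q/F$ (Lemma~\ref{lemma: tilde P}); they are not obtained by declaring conflations in $\proj\cp$ to be triangles. If one views $\proj\cp$ as an exact category in which every object is projective-injective, its stable category is zero, and in $\mod\cp$ every short exact sequence of projectives splits, so your Frobenius step could only ever output split triangles. (The Frobenius \emph{model} of $\proj\cp$ is $\proj\cR$, as in Theorem~\ref{thm: exact R}, not $\proj\cp$ itself.) The paper's actual proof is quite different: it runs a diagram chase involving all three Kan extensions applied to the sequence, identifies $K_{LR}L$ inside the stable module $K_R(N)/K_R(M)$, and extracts a five-term exact sequence $\Sigma^{-1}CK(N)\to\Sigma^{-1}CK(L)\to CK(M)\to CK(N)\to CK(L)$ of $\cp$--modules. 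The map $\Sigma^{-1}CK(L)\to CK(M)$ appearing there is exactly the connecting morphism your argument is missing, and it is this four-term tail that is shown to be isomorphic to a distinguished triangle.
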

\begin{proof}

Let us show first that $CK$ is a $\delta$--functor. Let $0 \to M \to N \to L \to 0$ be an exact sequence in $\mod \cS$. 
Then we obtain the following commutative diagram
 \[ \xymatrix{ 
  CK(M) \ar[r] & CK(N) \ar[r] & CK( L)  \\
  K_R(M) \ar@{^{(}->}[r] \ar@{->>}[u]& K_R( N) \ar@{->>}[r] \ar@{->>}[u]& K_{R}( N)/ K_R(M) \ar[u]\\
 K_{LR} M \ar@{^{(}->}[r]  \ar@{^{(}->}[u] & K_{LR} N \ar@{^{(}->}[u]  \ar@{->>}[r] & K_{LR} N/ K_{LR} M  \ar[u]_f  \\
}\]
As $K_{R}( N)/ K_R(M) $ is stable, it embeds into $K_R(L)$. 

Furthermore, as $K_{LR} N/ K_{LR} M$ is costable, the image of $f$ in $K_R(L) $ is given by $K_{LR} L$. 
Hence, we have that $ \ker(f) \cong \ker( K_{LR} N/ K_{LR} M \to K_{LR} L) $ and as $K_{LR} N/ K_{LR} M$ is costable and restricts under $\res$ to $L$, the canonical map $K_L(L) \to K_{LR} L$ factors through $K_{LR} N/ K_{LR} M \to K_{LR} L$. Hence, we obtain the following commutative diagram 
\[ \xymatrix{  KK(L) \ar@{^{(}->}[r] & K_L( L) \ar@{->>}[r] & K_{LR} L\\
 S \ar@{^{(}->}[r] \ar@{^{(}->}[u]& K_L( L) \ar@{->>}[r] \ar@{=}[u]& K_{LR} N/ K_{LR} M   \ar@{->>}[u] \\
}\]
Applying the snake Lemma, we conclude that $KK(L) \cong \Sigma^{-1} CK(L) $ maps onto $\ker(f) \cong KK(L) /S$. 
We consider next the diagram 
\[ \xymatrix{  S \ar@{^{(}->}[r] & K_L( L) \ar@{->>}[r] & K_{LR} N/ K_{LR} M  \\
KK(N) \ar@{^{(}->}[r] \ar[u]& K_L( N) \ar@{->>}[r] \ar@{->>}[u]& K_{LR} N \ar@{->>}[u]  \\
}\]
As $K_{LR} M$ is costable, there is no map from $\ker (K_{LR} N \to K_{LR} N/ K_{LR} M) $ to $S$. 
So by the snake Lemma the map $KK(N) \cong \Sigma^{-1} CK(L)$ maps surjectively onto $S$. 

We conclude that $$\Sigma^{-1} CK(N) \to \Sigma^{-1} CK(L) \to CK(M) \to CK(N) \to CK(L) $$ is an exact sequence in 
$\proj \cp$ and as an exact sequence, 
$$\Sigma^{-1} CK(L) \to CK(M) \to CK(N) \to CK(L) $$ is therefore isomorphic to a distinguished triangle in $\proj  \cp$.  
Hence $CK$ and $KK$ give rise to a $\delta$-functor. 
\end{proof}

Note that, as $CK$ is a $\delta$-functor, we are able to recover the image of $CK$ on $\cS$--modules from the image  of  $CK$ on simple $\cS$--modules.

For a vector $v: \cR_0 -\cS_0 \to \Z$, we define $C_q v : \cR_0 -\cS_0   \to \Z$ by
\[
(C_q v)(x) = v(x) - \big(\sum_{y\to x} v(y)\big) + v(\tau(x)),\  x\in \cR_0 -\cS_0,
\]
where the sum ranges over all arrows $y\to x$ of $\tilde Q$ and $y \in  \cR_0 -\cS_0 $.

We will also denote by $w \sigma: \cR_0 -\cS_0 \to \N$ the dimension vector which sends $ x$ to $ w(\sigma(x)) $ if $x \in C$, and vanishes otherwise. 
Using $C_q$, we can compute explicitly the functors $KK$ and $CK$. 
\begin{proposition}
 Let $Q$ be of Dynkin type and let $K_{LR} M$ have dimension vector $(v,w)$, then the multiplicity of $z_{\cp}^{\wedge}$ as direct summand 
of $KK(M)$ is given by $(w \sigma  - C_q v )(z) $.   
\end{proposition}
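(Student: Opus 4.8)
\emph{Approach.} The plan is to convert the combinatorial quantity ``multiplicity of $z^{\wedge}_{\cp}$ in $KK(M)$'' into the dimension of a first $\Ext$-group and then evaluate that dimension through a resolution of the simple module $S_z$. Since $KK(M)$ is a finitely generated projective $\cp$--module by Lemma~\ref{lemma:projectivity-of-KK}, say $KK(M)\cong\bigoplus_{u}(u^{\wedge}_{\cp})^{m_u}$, the Yoneda isomorphism \eqref{eq:Yoneda} gives $\Hom(u^{\wedge},S_z)=S_z(u)$, which is one-dimensional for $u=z$ and zero otherwise; hence the sought multiplicity is $m_z=\dim_k\Hom(KK(M),S_z)$. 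By the identity $\Ext^1(K_{LR}M,S)=\Hom(KK(M),S)$, valid for all modules $S$ supported in $\cR_0-\cS_0$, this reduces the problem to computing $\dim_k\Ext^1(K_{LR}M,S_z)$.

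\emph{Key steps.} First I would apply $\Hom(K_{LR}M,-)$ to the injective coresolution of $S_z$ from Lemma~\ref{lemma: resolutions-of-simples-in -R}(b),
\[
0\to S_z\to z^{\vee}\to\bigoplus_{z\to y}y^{\vee}\to\tau^{-1}(z)^{\vee}\to 0,
\]
and use the isomorphism $\Hom(N,x^{\vee})=D(N(x))$ of \eqref{eq:Yoneda} to identify the three terms of the resulting complex, whose $k$-dimensions are $\dim(K_{LR}M)(z)$, $\sum_{z\to y}\dim(K_{LR}M)(y)$ and $\dim(K_{LR}M)(\tau^{-1}z)$. Next I would kill the outer cohomology: $\Ext^0(K_{LR}M,S_z)=\Hom(K_{LR}M,S_z)=0$ because $K_{LR}M$ is costable, while $\Ext^2(K_{LR}M,S_z)\cong\Hom(S_z,K_{LR}M)=0$ because it is stable, both exactly as in the earlier lemma relating $KK$ and $CK$ to $\Ext^1$. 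Consequently the Euler characteristic of the two-term complex computes $-\dim_k\Ext^1(K_{LR}M,S_z)$ outright, giving
\[
\dim_k\Ext^1(K_{LR}M,S_z)=\sum_{z\to y}\dim(K_{LR}M)(y)-\dim(K_{LR}M)(z)-\dim(K_{LR}M)(\tau^{-1}z).
\]

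\emph{Identification with $w\sigma-C_qv$.} Finally I would substitute the dimension vector $(v,w)$, so that $\dim(K_{LR}M)(u)$ is $v(u)$ on non-frozen $u$ and $w(u)$ on the frozen objects of $\sigma(C)$, and split the middle sum according to whether the target $y$ is frozen. By the construction of $\tilde Q$ the only possible frozen target of an arrow out of $z$ is $\sigma(\tau^{-1}z)$, occurring precisely when $\tau^{-1}z\in C$ and contributing the term recorded by $w\sigma$; the non-frozen targets contribute a sum of the $v(y)$. Rewriting this latter sum by the mesh bijection $\alpha\leftrightarrow\overline{\alpha}$, which matches arrows out of a vertex with arrows into its $\tau$-translate, and comparing with the definition of $C_qv$, then collapses the expression to the claimed value $(w\sigma-C_qv)(z)$.

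\emph{Main obstacle.} The genuine content lies in the bookkeeping of the last step: one must keep precise track of which arrows of $\tilde Q$ meet the frozen vertices $\sigma(C)$, so as to produce exactly the correction term $w\sigma$, and then apply the mesh (Auslander--Reiten) bijection to reconcile the ``arrows out of $z$ and the shift $\tau^{-1}z$'' appearing in the injective coresolution with the ``arrows into $z$ and the shift $\tau z$'' built into $C_q$; this $\tau$-bookkeeping is the crux of the matching. The vanishing of the boundary $\Ext$-groups, which rests on the bistability of $K_{LR}M$ together with the self-injectivity of $\cp$ from Lemma~\ref{lemma: tilde P}, is the other point that must be handled with care.
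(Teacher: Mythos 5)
Your first two reductions coincide exactly with the paper's: by Lemma~\ref{lemma:projectivity-of-KK} and \eqref{eq:Yoneda} the multiplicity in question is $\dim \Hom(KK(M),S_z)$, which equals $\dim \Ext^1(K_{LR}M,S_z)$, and this is to be extracted as an Euler characteristic after the outer cohomology is killed by (co)stability. The gap is in the last step, precisely at the ``$\tau$-bookkeeping'' you flag as the crux. You evaluate on the injective coresolution of $S_z$, so your three terms sit at $z$, at the targets of arrows \emph{out of} $z$, and at $\tau^{-1}z$. Now run your own bookkeeping to the end: the unique frozen target of an arrow out of $z$ is $\sigma(\tau^{-1}z)$, so the frozen contribution is $w(\sigma(\tau^{-1}z))=(w\sigma)(\tau^{-1}z)$, \emph{not} $(w\sigma)(z)$; and the mesh bijection identifies arrows out of $z$ with arrows into $\tau^{-1}z$, so the non-frozen part of the middle sum becomes $\sum_{y\to\tau^{-1}z}v(y)$. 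Combined with the outer terms $-v(z)-v(\tau^{-1}z)$ this yields
\[
\dim \Ext^1(K_{LR}M,S_z)\;=\;(w\sigma)(\tau^{-1}z)+\sum_{y\to\tau^{-1}z}v(y)-v(\tau^{-1}z)-v(z)\;=\;(w\sigma-C_qv)(\tau^{-1}z),
\]
i.e.\ the claimed quantity evaluated at $\tau^{-1}z$ rather than at $z$. No further identity collapses the value at $\tau^{-1}z$ to the value at $z$; these differ in general, so what your argument proves is the proposition with $z$ replaced by $\tau^{-1}z$. A quick sanity check exposes the mismatch: for $M=S_{\sigma(x)}$ one has $K_{LR}M=S_{\sigma(x)}$, hence $v=0$, and your formula would give $KK(S_{\sigma(x)})\cong(\tau x)^{\wedge}$, contradicting Lemma~\ref{lemma: KK-of-simple} ($KK(S_{\sigma(x)})\cong x^{\wedge}_{\cp}$), which is exactly the normalization against which the stated formula is calibrated.

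The paper avoids this by working with the mesh ending at $z$ instead: it uses the complex
\[
0 \to K_{LR}M(\tau z) \to \bigoplus_{y\to z} K_{LR}M(y) \to K_{LR}M(z) \to 0,
\]
which arises from the projective resolution of $S_z$ in Lemma~\ref{lemma: resolutions-of-simples-in -R}, and which is exact at both ends by stability and costability of $K_{LR}M$. Here the frozen source of an arrow into $z$ is $\sigma(z)$ itself, and the translate appearing is $\tau z$, so the middle cohomology has dimension $w(\sigma(z))+\sum_{y\to z}v(y)-v(z)-v(\tau z)=(w\sigma-C_qv)(z)$ on the nose, with no reindexing. So your proof can be repaired by switching to this resolution (equivalently, by acknowledging that your route computes the multiplicity with a $\tau^{-1}$-shift and undoing it); as written, the final ``collapse'' step is not a bookkeeping subtlety but the point where the argument fails. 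Your vanishing arguments for the outer cohomology are essentially fine, though note that the duality used is really $\Ext^2(K_{LR}M,S_z)\cong D\Hom(S_{\tau^{-1}z},K_{LR}M)$ (again a $\tau$-twist); it vanishes by stability all the same.
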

\begin{proof}
Let $M \in \mod \cS$. 
We have already shown in Lemma~\ref{lemma:projectivity-of-KK} that $KK(M)$ is a finitely generated projective $\cp$--module and 
that $KK(S_{\sigma(x)})\cong x^{\wedge}_{\cp}$ in Lemma~\ref{lemma: KK-of-simple}. 
Hence it remains to show that the isomorphism class of $KK(M)$ depends only on $\gdim K_{LR} M$. 
The multiplicity of $x^{\wedge}_{\cp}$ as a direct summand of $KK(M)$ is given by $\dim \Hom(KK(M), S_x)= \dim \Ext^1( K_{LR} M , S_x)$. 
Now the dimension $\Ext^1( K_{LR} M , S_x)$ is given by the dimension of the cohomology of the right and left exact complex 
$$ 0 \to K_{LR} M (\tau x) \to \bigoplus_{x \to y}K_{LR} M (y) \to K_{LR} M (x) \to 0$$  
where the sum ranges over all arrows $x \to y$ of $\overline{ Q}$. Hence the dimension
equals  $w \sigma -C_q v$ and depends uniquely on the dimension vector of $K_{LR} M$
\end{proof}

If $Q$ is of Dynkin type and $ \cp$ isomorphic to the preprojective algebra $\cp_Q$,
the map $C_q$ is injective. Indeed in this case, the map $C_q$ corresponds to the Cartan matrix associated with the diagram of $Q$. 
We obtain a stronger statement. 
\begin{theorem}\label{thm: stratification-dynkin} 
Let $Q$ be of Dynkin type and  let $C_q$ be injective. The functors $KK$ and $CK$ are $\delta$--functors 
$$ \mod \cS \to \proj  \cp$$ satisfying 
that any two modules $M_1$ and $M_2$ belonging to $\cm_0(w)$ lie in the same stratum if 
and only if their image under $CK \circ \res $ respectively $KK \circ \res$ are isomorphic. 
\end{theorem}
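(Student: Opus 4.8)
The plan is to convert the geometric statement ``lying in the same stratum'' into a single combinatorial invariant $v$, and then to read off from the explicit formula for $KK$ that this invariant is recovered by the isomorphism class of $KK(M)$ precisely because $C_q$ is injective.

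First I would use Theorem~\ref{thm: stratification of S} to identify $\cm_0(w)$ with $\rep(w, \cS)$, so that a point of $\cm_0(w)$ is simply an $\cS$--module $M$, and so that the strata are the disjoint, nonempty images of the $\cm^{reg}(v,w)$ under the isomorphism $\res$. By Lemma~\ref{lemma:closed-orbits} the unique bistable lift of $M$ is $K_{LR}(M)$, and its dimension vector is $(v,w)$ with $v$ the restriction of $\gdim K_{LR}(M)$ to $\cR_0 - \cS_0$. Since the strata $\cm_0^{reg}(v,w)$ are disjoint and indexed by $v$, two modules $M_1, M_2 \in \rep(w,\cS)$ lie in the same stratum if and only if the non-frozen dimension vectors $v_1, v_2$ of $K_{LR}(M_1), K_{LR}(M_2)$ coincide.

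Next I would invoke the preceding Proposition, which computes the multiplicity of the indecomposable projective $z_{\cp}^{\wedge}$ in $KK(M_i)$ as $(w\sigma - C_q v_i)(z)$. Since $\proj \cp$ is Krull--Schmidt, $KK(M_1) \cong KK(M_2)$ if and only if these multiplicity vectors agree for every $z$; as both modules lie over the same fixed $w$, the term $w\sigma$ cancels, so this is equivalent to $C_q v_1 = C_q v_2$. Using the hypothesis that $C_q$ is injective, this holds if and only if $v_1 = v_2$, which by the previous paragraph is exactly the condition that $M_1$ and $M_2$ lie in the same stratum. This settles the assertion for $KK$. For $CK$ I would appeal to Proposition~\ref{prop: CK and KK}, which gives $CK = \Sigma KK$; since $\Sigma$ is an autoequivalence of the triangulated category $\proj \cp$ it both preserves and reflects isomorphisms, whence $CK(M_1) \cong CK(M_2)$ if and only if $KK(M_1) \cong KK(M_2)$, and the same conclusion follows.

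The chain of equivalences itself is routine; the only step demanding genuine care is the first one, namely pinning down that the stratum of $M$ is encoded precisely by $v = \gdim K_{LR}(M)|_{\cR_0-\cS_0}$ and that distinct values of $v$ yield disjoint strata. This is where the results of the previous section do the real work: the isomorphism of Theorem~\ref{thm: stratification of S} furnishes the identification of points of $\cm_0(w)$ with $\cS$--modules, and the characterization of closed orbits in Lemma~\ref{lemma:closed-orbits} supplies the bistable lift whose dimension vector pins down the stratum. Once this identification is secured, the injectivity of $C_q$ is exactly the input needed to convert equality of the $KK$--multiplicity vectors back into equality of the $v$'s, closing the argument.
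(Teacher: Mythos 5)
Your proposal is correct and follows essentially the same route as the paper's proof: identify the stratum of $M$ with the non-frozen dimension vector $v$ of $K_{LR}(M)$, use the preceding multiplicity formula $(w\sigma - C_q v)(z)$ for $KK(M)$ so that the forward implication is immediate and the converse follows from injectivity of $C_q$, and transfer the result to $CK$ via $CK = \Sigma KK$ (Proposition~\ref{prop: CK and KK}). The only cosmetic difference is that you route the identification of strata through Theorem~\ref{thm: stratification of S} and Lemma~\ref{lemma:closed-orbits}, where the paper cites Lemma~\ref{lemma:stable} directly, and you make explicit the Krull--Schmidt cancellation of $w\sigma$ that the paper leaves implicit.
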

\begin{proof}  
Let $M \in \cm_0(w)$. We recall that the dimension vector of $K_{LR}M$ determines the stratum of $M$: 

We have that $M\in \cm_0(v,w) $ if and only if  $\gdim K_{LR} ( \res M)=(v,w)$ by Lemma~\ref{lemma:stable}. 
We have shown in the previous proposition that the dimension vector of $K_{LR}( \res M)$ determines the image of $KK(\res M)$ and $CK(\res M)$ up to 
isomorphism. 

So suppose that $M_1, \ M_2 \in \cm_0(w)$ and lie in the strata $\cm_0^{reg}(v_1,w)$ and $\cm^{reg}_0(v_2, w)$ respectively. 
If $KK(\res M_1) \cong KK(\res M_2)$ then $w\sigma -C_qv_1=w\sigma -C_qv_2$, and as $C_q$ is injective, we find $v_1=v_2$.  
Since by Proposition \ref{prop: CK and KK},  we have that $CK =  \Sigma KK$, the analogous statement also holds for $CK$. 
\end{proof}

In the non-Dynkin case, we have a similar statement, relating the strata to the isomorphism classes of the objects in $\inj^{nil} (\cp)$,  which denotes the category of finitely cogenerated injective modules with nilpotent socle. Let us denote by $I_x$ the injective hull of $S_x$ seen as a $\cp$--module.

\begin{theorem}\label{thm: stratification}
There is a map $$\Psi:\mod \cS \to \inj^{nil} ( \cp)$$ such that two objects appearing in the same stratum are isomorphic under 
$\Psi \circ \res $. We have  $\Psi(S_{\sigma(x) })= I_x$ for all $x\in \cR_0-\cS_0$. 
\end{theorem}
\begin{proof}
Let $I_M$ be the injective hull of $CK(M)$. As $CK(M)$ is finitely cogenerated
for finitely cogenerated $M$, we know that $I_x$ appears only finitely 
many times as a factor of $I_M$. We define $\Psi(M)$ to be the maximal direct factor of $I_M$ with nilpotent socle. As $K_{LR} S_{\sigma(x)}= S_{\sigma(x)}$, we have that $\dim \Ext^1(S_z, K_{LR} S_{\sigma(x)}) =\delta_{z,x}$ and hence $\Psi(S_{\sigma(x)})=I_x$. 
Proving that this defines a map parametrizing the strata of $\mod \cS$ is analogous to the Dynkin case. 
\end{proof}
Note that if $Q$ is not of Dynkin quiver, the maps $CK$ and $\Psi$ do not coincide: we have that $CK(S_{\sigma(x)} ) = x^\vee_{\cp} $ which is not isomorphic to 
$\Psi(S_{\sigma(x)}) =I_x$. 

\subsection{Degeneration order on Strata}
Let us assume in this section that $Q$ is of Dynkin type and that the map $C_q$ associated with $\cp$ is invertible. 
The correspondence between the finitely generated projective $\cp$--modules and the strata of the affine quiver variety allows us to identify the degeneration order of strata with a degeneration order on objects of the triangulated category $\proj \cp$. 

We recall first the degeneration order on strata of  the classical quiver variety $
\cm_0(w)$:
we define $\cm^{reg}_0(v,w) \le \cm_0^{reg}(v', w)$ if and only if $\cm^{reg}_0(v,w) \subset \overline{\cm_0^{reg}(v', w)}$.
This is shown to be the case if and only if $v(x) \le v'(x) $ for all $ x \in  \cR_0- \cS_0$  by \cite[4.1.3.14]{Qin12}.  

Analogously, we define a partial order on the strata of $\cm_0(w)$. For two points $M, M' \in \cm_0(w)$, we set $M \le M'$
if and only if $v(x) \le v'(x)$ for all $x\in \cR_0 -\cS_0$, where $(v,w)$ and $(v',w)$ are the dimension vectors of $K_{LR} (\res M)$ and $K_{LR}(\res M')$ respectively. 
This clearly defines a partial order on the strata $\cm^{reg}_0(v,w)$ of $\cm_0(w)$, which we also call the degeneration order. 

In \cite{JensenSuZimmermann05a}, Jensen, Su and Zimmermann define a partial order on objects of a triangulated category $\ct$ 
satisfying the following conditions:
\begin{enumerate}
\item $\ct$ is $Hom$-finite and idempotent morphisms are split;
\item For all $X, Y \in \ct$ there is an $n \in \Z-\{0\}$ such that $\Hom(X, \Sigma^n Y) $ vanishes. 
\end{enumerate}
The first condition assures transitivity and the second condition allows to conclude that the preorder is anti-symmetric. 
Given two objects $X$ and $Y$ of $\ct$, we write in convention with our notation, 
$X \le Y$ if and only if there is an object $Z$ in $\ct$ and a distinguished triangle
 $$X \to Y \oplus Z \to Y \to \Sigma X. $$
 
We show that under the stratification functor $KK$, the degeneration order on strata corresponds to the order on objects of 
$\proj \cp$. Note that in the case that $\ct $ is $\cp$ condition $(1)$ is satisfied by Lemma \ref{lemma: tilde P}, but the condition $(2)$ is not satisfied. Hence the order of \cite{JensenSuZimmermann05a} is only a preorder. 
But it will follow from the next Theorem that the preorder defines in fact a partial order on the objects of $\proj \cp$.
\begin{theorem}
If $Q$ is of Dynkin type, the degeneration order among the strata of $\cm_0(w)$
corresponds to the degeneration order of the triangulated category $\proj \cp$.  
\end{theorem}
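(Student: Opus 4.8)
The plan is to fix a dictionary between the two sides and then prove the two implications separately. Recall that a stratum of $\cm_0(w)$ is labelled by a dimension vector $v\colon \cR_0-\cS_0\to\N$, that a point $M$ of $\cm_0(w)\cong\rep(w,\cS)$ lies in $\cm_0^{reg}(v,w)$ precisely when $\gdim K_{LR}M=(v,w)$, and that by the multiplicity formula the isomorphism class of $KK(M)$ depends only on $v$; I write $P_v:=KK(M)$ for any $M$ in the stratum. Because $C_q$ is invertible, Theorem~\ref{thm: stratification-dynkin} makes $v\mapsto P_v$ injective on strata, and these objects exhaust $\proj\cp$ since $KK$ is essentially surjective. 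Finally, by \cite[4.1.3.14]{Qin12} the degeneration order on strata is the componentwise order $v\le v'$. Thus the theorem amounts to the equivalence $v\le v'\iff P_v\leq P_{v'}$ in the Jensen--Su--Zimmermann order.

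For the forward implication I would realise the closure relation $\cm_0^{reg}(v,w)\subseteq\overline{\cm_0^{reg}(v',w)}$ at the level of $\cS$-modules. Since every point of $\rep(w,\cS)$ carries the single dimension vector $w$, this is an orbit-closure relation inside one representation variety, so the closure relation is realised by a degeneration $M\leq_{\mathrm{deg}}M'$ of $\cS$-modules with $M$ in the stratum $v$ and $M'$ in the stratum $v'$, to which Riedtmann--Zwara--Bongartz theory attaches a short exact sequence $0\to Z\to M'\oplus Z\to M\to 0$. Applying the $\delta$-functor $KK$ converts this into the distinguished triangle $KK(Z)\to P_{v'}\oplus KK(Z)\to P_v\to\Sigma KK(Z)$, which is exactly the triangle witnessing $P_v\leq P_{v'}$. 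The content of this step is concentrated in the covering relations $v'=v+e_x$: there $C_q e_x$ is the alternating sum of the mesh at $x$, so passing from $P_v$ to $P_{v+e_x}$ replaces the summand $x^\wedge\oplus(\tau^{-1}x)^\wedge$ by the mesh term $\bigoplus_{x\to z}z^\wedge$, and the relevant triangle is simply the Auslander--Reiten triangle of $\proj\cp$ starting at $x^\wedge$; transitivity then covers arbitrary $v\le v'$ once the intervening strata are known to be non-empty.

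For the converse I would start from a Jensen--Su--Zimmermann triangle $Z\to P_{v'}\oplus Z\to P_v\to\Sigma Z$ and read off the Hom-dimension inequalities it entails, tested against the projectives $C=c^\wedge$ for $c\in C$. By Yoneda $\Hom(c^\wedge,z^\wedge)=\cp(c,z)$, so the multiplicity formula rewrites these as $\sum_z\big(C_q(v'-v)\big)(z)\,\dim\cp(c,z)\ge 0$ for every $c$, that is $\mathbf{C}\,C_q(v'-v)\ge 0$ with $\mathbf{C}$ the Cartan matrix of $\cp$. The hypothesis that $C_q$ is invertible, together with the positivity of the Euler form of $\cp$ in Dynkin type, is what would allow me to invert $\mathbf{C}\,C_q$ monotonically and conclude $v'-v\ge 0$, i.e.\ $v\le v'$. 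I expect this to be the main obstacle: passing from a triangulated degeneration to the componentwise inequality is strictly more than a $K_0$-statement, since in $K_0(\proj\cp)$ one only obtains $[P_v]=[P_{v'}]$, and the second Jensen--Su--Zimmermann axiom fails for $\proj\cp$, so a priori only a preorder is available and the desired inequalities must be squeezed out of the triangle itself.

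Once both implications are established, the correspondence transports the genuine partial order $v\le v'$ onto the preorder $\leq$ on $\proj\cp$, whose objects are exhausted by the $P_v$; in particular this forces antisymmetry, so the Jensen--Su--Zimmermann preorder is in fact a partial order on $\proj\cp$, as announced before the statement.
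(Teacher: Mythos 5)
Your dictionary (strata labelled by $v$, $KK$-images $P_v$ with multiplicity vector $w\sigma - C_q v$, Qin's identification of the closure order with the componentwise order) agrees with the paper's setup. The genuine gap is in your "converse" implication, $P_v \le P_{v'} \Rightarrow v \le v'$ --- which is precisely the direction the paper proves in detail --- and the method you propose for it provably cannot work. Testing the Jensen--Su--Zimmermann triangle against the projectives $c^\wedge$ yields only the inequalities $\mathbf{C}\,C_q(v'-v)\ge 0$, where $\mathbf{C}=(\dim\cp(c,z))_{c,z}$ is the Cartan matrix of $\cp$, and this matrix is in general \emph{singular} even when $C_q$ is invertible: for $Q=A_2$ and $F=\tau$ one has $\cp\cong\cp_{A_2}$, so all four entries of $\mathbf{C}$ equal $1$, while $C_q$ is the (invertible) Cartan matrix of $A_2$; taking $v'-v=(1,-1)$ gives $C_q(v'-v)=(3,-3)$ and hence $\mathbf{C}\,C_q(v'-v)=0\ge 0$ although $v'-v\not\ge 0$. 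So no monotone inversion of $\mathbf{C}C_q$ exists, and more generally no Hom-dimension or $K_0$-type consequence of the triangle can force componentwise positivity --- exactly the obstacle you flagged but did not overcome. The idea you are missing is the paper's lift of the triangle back to module level: choose \emph{semisimple} $\cS$-modules $Y$ and $L$ with $KK(Y)\cong P_v$ and $KK(L)\cong Z$, use the isomorphism $\Ext^1_{\cS}(S_{\sigma(x)},S_{\sigma(y)})\cong\cp(x,\Sigma y)$ to realize the connecting morphism $Z\to\Sigma P_v$ as an extension $0\to Y\to N\to L\to 0$ in $\mod\cS$, and then exploit a positivity that exists only at module level, namely $\gdim K_{LR}N=(v_N,w_N)$ with $v_N\ge 0$; comparing multiplicity vectors in $KK(N)\cong KK(M')\oplus Z$ gives $C_q(v'-v_N)=C_q v$, whence $v=v'-v_N\le v'$ by injectivity of $C_q$.

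Your other direction also has an unjustified step: "the closure relation is realised by a degeneration $M\le_{\mathrm{deg}}M'$ of $\cS$-modules" does not follow, because a stratum $\cm_0^{reg}(v',w)$ is a $G_w$-stable constructible set containing in general infinitely many orbits, and a point in the closure of such a set need not lie in the closure of any single orbit inside it; so Riedtmann--Zwara theory does not apply directly. (There is also a convention issue: Zwara's sequence $0\to Z\to M'\oplus Z\to M\to 0$ produces under the $\delta$-functor a triangle of the form $KK(Z)\to KK(M')\oplus KK(Z)\to KK(M)\to\Sigma KK(Z)$, not the form $P_v\to P_{v'}\oplus Z\to Z\to\Sigma P_v$ used in the paper, and since the second Jensen--Su--Zimmermann axiom fails for $\proj\cp$ these forms are not interchangeable for free.) Your fallback via sums of Auslander--Reiten triangles at covering relations $v'=v+e_x$ is the right mechanism for this implication, but it still requires the intermediate objects to exist, which you leave open; the paper itself does not reprove this direction but cites the graded analogue in Keller--Scherotzke. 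As it stands, the proposal establishes neither implication completely, and the direction constituting the paper's actual proof is not repairable by your strategy.
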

\begin{proof}
Let $M$ and $M'$ be two elements in $\cm_0(w)$ belonging to the strata $ \cm^{reg}_0(v,w)$ and $ \cm_0^{reg}(v', w)$ respectively. Let us assume that $KK(\res M) \le KK(\res M')$ in the degeneration order of \cite{JensenSuZimmermann05a}. We will show that $v(x) \le v'(x)$ for all $x\in \cR_0-\cS_0$. 

We suppose there is a triangle 
\begin{equation} \label{triangle}
KK(\res M) \to KK(\res M') \oplus Z \to Z \to \Sigma KK(\res M) 
\end{equation}
 for some $Z \in \proj \cp$. 

By Lemma~\ref{lemma: KK-of-simple} we can find semi-simple 
$\cS$--modules $Y$ and $L$ such that $KK(\res M) \cong KK(Y)$ and $KK(L)=Z$. 

Hence $\gdim K_{LR} Y= (0, w-C_qv \sigma^{-1})$ and $\gdim K_{LR} L = (0,w_L)$ for the dimension vector  $w_L=\gdim L$ of $\cS$. 
Note that $KK$ induces an isomorphism 
\[
\Ext^1_{\cS}(S_{\sigma(x)}, S_{\sigma(y)}) \cong {\cp}(x, \Sigma y).
\]
Hence we can lift the triangle \ref{triangle} to an exact sequence
$$ 0\to Y \to N \to L \to 0$$ 
for some $ N \in \mod \cS$.

Applying the functor $K_{LR}$ to the short exact sequence and using the left and right exactness of $K_{LR}$ yields the inequality
$$ \gdim K_{LR} Y  + \gdim K_{LR} L \le \gdim K_{LR} N.$$
Let $\gdim K_{LR} N=(v_N, w_N)$, then we find
$w_N= w-C_qv \sigma^{-1} + w_L$ and $v_N \ge 0$. 

As $KK(N) \cong KK(L) \oplus KK(\res M')$, we have that 
\[w_N  - C_q v_N \sigma^{-1}  = (w_L + w )- C_qv'  \sigma^{-1}.
\] Hence it follows that $ C_q(v'-v_N) =C_qv$ implying  $v'- v_N =v$ by our assumption on $C_q$. 

For the converse claim, we refer to \cite[3.18]{KellerScherotzke13a}.
The proof is analogous if we replace the split Grothendieck group of $\cd_Q$ by the split Grothendieck group of $\proj  \cp$. 
\end{proof}
Clearly, the degeneration order is anti-symmetric. Hence the same holds for the preorder on objects of $\proj \cp$. 

\def\cprime{$'$} \def\cprime{$'$}
\providecommand{\bysame}{\leavevmode\hbox to3em{\hrulefill}\thinspace}
\providecommand{\MR}{\relax\ifhmode\unskip\space\fi MR }
\providecommand{\MRhref}[2]{%
  \href{http://www.ams.org/mathscinet-getitem?mr=#1}{#2}
}
\providecommand{\href}[2]{#2}

\end{document}